\newcommand{\1}{\mathds{1}}
\numberwithin{equation}{section}
\newcommand{\pt}{\partial}
\newcommand{\rd}{{\rm d}}
\newcommand{\bR}{{\mathbb R}}
\newcommand{\ba}{{\bf{a}}}
\newcommand{\bu}{{\bf{u}}}
\newcommand{\bv}{{\bf{v}}}
\newcommand{\bw}{{\bf{w}}}
\newcommand{\al}{\alpha}
\newcommand{\be}{\begin{equation}}
\newcommand{\ee}{\end{equation}}
\newcommand{\e}{{\varepsilon}}
\newcommand{\la}{\lambda}
\newcommand{\om}{{\omega}}
\def\RR{{\mathbb R}}
\renewcommand{\b}[1]{\bm{\mathrm{#1}}} 
\renewcommand{\cal}{\mathcal}
\newcommand{\wh}{\widehat}
\newcommand{\wt}{\widetilde}
\newcommand{\bb}{{\bf{b}}}
\newcommand{\bx}{{\bf{x}}}
\newcommand{\ii}{\mathrm{i}} 
\newcommand{\col}{\mathrel{\mathop:}}
\renewcommand{\epsilon}{\varepsilon}
\renewcommand{\leq}{\leqslant}
\renewcommand{\geq}{\geqslant}
\renewcommand{\le}{\leq}
\renewcommand{\ge}{\geq}
\renewcommand{\P}{\mathbb{P}}
\newcommand{\E}{\mathbb{E}}
\newcommand{\R}{\mathbb{R}}
\newcommand{\C}{\mathbb{C}}
\newcommand{\N}{\mathbb{N}}
\newcommand{\Z}{\mathbb{Z}}
\newcommand{\hb}[1]{\bigl\{{#1}\bigr\}}
\newcommand{\absb}[1]{\bigl\lvert #1 \bigr\rvert}
\DeclareMathOperator{\tr}{Tr}
\DeclareMathOperator{\re}{Re}
\DeclareMathOperator{\im}{Im}
\DeclareMathOperator{\OO}{O}
\theoremstyle{plain} 
\newtheorem{theorem}{Theorem}[section]
\newtheorem*{theorem*}{Theorem}
\newtheorem{lemma}[theorem]{Lemma}
\newtheorem*{lemma*}{Lemma}
\newtheorem{corollary}[theorem]{Corollary}
\newtheorem*{corollary*}{Corollary}
\newtheorem{proposition}[theorem]{Proposition}
\newtheorem*{proposition*}{Proposition}
\newtheorem{definition}[theorem]{Definition}
\newtheorem*{definition*}{Definition}
\newtheorem*{example*}{Example}
\newtheorem*{remark*}{Remark}
\newtheorem*{remarks*}{Remarks}
\newtheorem{assumption}{Assumption}
\newtheorem*{assumption*}{Assumption}
\newcommand{\g}{{\sigma}}
\renewcommand{\subsection}{\@startsection
{subsection}
{2}
{0mm}
{-\baselineskip}
{0 \baselineskip}
{\normalfont\itshape}} 
\newcommand{\nc}{\normalcolor}
\def\bR{{\mathbb R}}
\newcommand{\bq}{{\bf q}}
\renewcommand{\b}[1]{\boldsymbol{\mathrm{#1}}} 
\def\@empty{}
\def\author#1{\par
    {\centering{\authorfont#1}\par\vspace*{0.05in}}
}
\def\titlefont{\fontsize{13}{15}\bfseries\boldmath\selectfont\centering{}}
\def\authorfont{\fontsize{13}{15}}
\def\abstractfont{\fontsize{8}{10}}
\let\affiliationfont\rhfont
\def\address#1{\par
    {\centering{\affiliationfont#1\par}}\par\vspace*{11pt}
}
\def\keywords#1{\par
    \vspace*{8pt}
    {\authorfont{\leftskip18pt\rightskip\leftskip
    \noindent{\it\small{Keywords}}\/:\ #1\par}}\vskip-12pt}
\def\body{
\setcounter{footnote}{0}
\def\thefootnote{\alph{footnote}}
\def\@makefnmark{{$^{\rm \@thefnmark}$}}
}
\def\title#1{
    \thispagestyle{plain}
    \vspace*{-14pt}
    \vskip 79pt
    {\centering{\titlefont #1\par}}%
    \vskip 1em
}
\renewenvironment{abstract}{\par%
    \vspace*{6pt}\noindent 
    \abstractfont
    \noindent\leftskip18pt\rightskip18pt
}{%
  \par}
\renewcommand{\b}[1]{\boldsymbol{\mathrm{#1}}} 
\renewcommand{\section}{\@startsection
{section}
{1}
{0mm}
{-2\baselineskip}
{1\baselineskip}
{\normalfont\large\scshape\centering}} 
\begin{document}

~\vspace{-0.6cm}

\title{Universality for a class of random band matrices}

\vspace{1cm}
\noindent\begin{minipage}[b]{0.5\textwidth}

 \author{P. Bourgade}

\address{New York University, Courant Institute\\
bourgade@cims.nyu.edu}
 \end{minipage}
\begin{minipage}[b]{0.5\textwidth}

 \author{L. Erd{\H o}s}

\address{Institute of Science and Technology Austria\\
lerdos@ist.ac.at}

 \end{minipage}

\noindent\begin{minipage}[b]{0.5\textwidth}

 \author{H.-T. Yau}

\address{Harvard University\\
htyau@math.harvard.edu}
 \end{minipage}
\begin{minipage}[b]{0.5\textwidth}

 \author{J. Yin}

\address{University of Wisconsin, Madison\\
jyin@math.wisc.edu}

 \end{minipage}

~\vspace{0.3cm}

\begin{abstract}
We prove  the  universality for the eigenvalue gap statistics  in the bulk of the spectrum
for band matrices, in the  regime where the band width is comparable with the
dimension of the matrix, $W\sim N$.  All previous results concerning universality of non-Gaussian 
random matrices  are for mean-field models.
By relying on a new mean-field reduction technique, we deduce universality 
from quantum unique ergodicity for band matrices.
\end{abstract}

\keywords{Universality, Band matrices,  Dyson Brownian motion, Quantum unique ergodicity.} 


{\let\thefootnote\relax\footnote{\noindent 
The work of P. B. is partially supported by NSF grants DMS-1208859 and DMS-1513587. 
The work of L. E. is partially supported  by ERC Advanced Grant, RANMAT 338804.
The work of H.-T. Y. is partially supported by the NSF grant DMS-1307444 and the Simons investigator fellowship.
The work of J. Y. is partially supported by NSF Grant DMS-1207961. The major part of
this research was conducted when all authors were visiting IAS and were also supported by
the NSF Grant DMS-1128255.}}
\setcounter{footnote}{0}


\section{Introduction}

\subsection{Previous studies of Wigner and band matrices.}\ There has been tremendous progress on the  universality of non-invariant random matrices over the past decade.  
The basic model for such matrices,  the Wigner 
ensemble, consists of $N\times N$ real  symmetric or  complex  Hermitian 
matrices $H = (H_{ij}\nc )_{1\leq i,j\leq N}$ whose matrix entries are identically
distributed centered \nc random variables that are independent up to the symmetry constraint
 $H=H^*$. The fundamental conjecture regarding the universality of the Wigner ensemble, the Wigner-Dyson-Mehta conjecture, 
states  that the eigenvalue  gap distribution is universal in the sense that it depends only
on the symmetry class of the matrix, but is otherwise
independent of the details of the distribution of the matrix entries. 
This conjecture has recently been established for all symmetry classes in a series of works \cites{ErdSchYau2011,
ErdYau2012singlegap,BouErdYauYin2015} (see \cite{ErdPecRamSchYau2010, Joh2001,TaoVu2011}
 for the Hermitian class of Wigner matrices). 
The  approach initiated in \cite{ErdPecRamSchYau2010, ErdSchYau2011} to prove universality
consists of three steps: (i) establish a local semicircle law for the density of eigenvalues (or more generally estimates on the Green functions); 
(ii) prove universality of Gaussian divisible ensembles, i.e., Wigner matrices with a small Gaussian
component,  by analyzing the convergence of Dyson Brownian motion to local equilibrium; (iii) remove the small Gaussian
component by comparing Green functions of Wigner ensembles 
with those of Gaussian divisible ones.
For an overview of universality results for Wigner matrices and this three-step
strategy, see \cite{ErdYau2012}.

Wigner in fact  predicted that universality should hold
for any  large  quantum system, described by a 
 Hamiltonian $H$, of sufficient complexity. 
One prominent example
 where random matrix statistics are expected
to hold is the random Schr\"odinger
operator in the delocalized regime. 
The random Schr\"odinger  operator describes a  system  
 with spatial structure, whereas Wigner matrices are mean-field models. 
Unfortunately, there has been virtually no progress in establishing the universality for the random Schr\"odinger  operator
 in the delocalized regime.  One prominent  model interpolating between the  Wigner matrices and the random Schr\"odinger  operator  
 is the  \emph{ random band matrix}.  In this model the physical state space,
which labels the matrix elements, is equipped with a distance. Band matrices are \nc
 characterized
by the property that $H_{ij}$   becomes negligible if $\mbox{dist}(i,j)$ 
exceeds
a certain parameter, $W$, called the \emph{band width}. 
A fundamental conjecture \cite{fy} states  that the local spectral statistics of a band matrix  $H$ are governed by random matrix statistics for large $W$ and by Poisson statistics for small $W$. The transition 
is conjectured to be sharp \cite{fy, Spe} for the  band matrices 
 in one spatial dimension  around the critical value $W = \sqrt{N}$.
In other words, if $W \gg \sqrt{N}$, we expect the universality results of \cites{ErdPecRamSchYau2010, ErdSchYau2011,
ErdYau2012singlegap,BouErdYauYin2015}  to hold. Furthermore, 
the eigenvectors of $H$ are expected to be completely delocalized  in this range. For $W \ll \sqrt{N}$, one expects that 
the eigenvectors  are  exponentially localized.   This is the analogue of
the celebrated Anderson metal-insulator transition for random band matrices. 
The only rigorous work indicating the $\sqrt{N}$ threshold
concerns  the second mixed moments of the characteristic
polynomial for a special class of Gaussian band matrices \cite{Sch1, Sch2}
\nc

The localization length  for band matrices  in one spatial dimension
was recently investigated in numerous works.
For general distribution of the matrix entries,  
eigenstates were proved to be localized \cite{Sch2009} for $W\ll N^{1/8}$,  
and delocalization  of most eigenvectors \nc in a certain averaged sense holds for
 $W\gg N^{6/7}$ \cite{ErdKno2013}, improved to \nc $W\gg N^{4/5}$  \cite{ErdKnoYauYin2013}.
The Green's function $(H-z)^{-1}$  was controlled
 down to the scale $\im z\gg W^{-1}$ in \cite{ErdYauYin2012Univ}, implying
a lower bound of order $W$ for the localization length of all eigenvectors. \nc
When the entries are Gaussian with some specific covariance profiles, supersymmetry techniques
 are applicable to obtain stronger results. 
This approach  
has first been developed by physicists (see \cite{Efe1997} for an overview);
 the  rigorous analysis was initiated by Spencer (see \cite{Spe} for an overview),
with  an accurate estimate on the expected density of states on arbitrarily short scales \nc for a three-dimensional band matrix ensemble 
 in \cite{DisPinSpe2002}.  More recent works  include 
  universality for $W=\Omega(N)$ \cite{Sch2014}, and  the control of the Green's function
  down to the optimal scale $\im z\gg N^{-1}$, hence  delocalization in a strong sense for all
  eigenvectors,  when 
  $W\gg N^{6/7}$ \cite{BaoErd2015} with first four moments matching the Gaussian ones
(both results require a block structure and hold in part of the bulk spectrum).  These rigorous results
based on the supersymmetric method so far assumed the complex hermitian condition.
Our work is about  statistics in the bulk of the spectrum for both real symmetric and complex hermitian band matrices, \nc
but we note that for universality at the  spectral  edge, much more is known \cite{Sod2010}: extreme eigenvalues follow the Tracy-Widom law for $W\gg N^{5/6}$, an essentially optimal condition.

\subsection{Difficulties and new ideas for general non mean-field models.\ } In trying to use the above three-steps strategy for band matrices, let us first mention difficulties related to step (i), the local law.
The Wigner-Dyson-Gaudin-Mehta conjecture was originally stated for 
Wigner matrices, but the methods of \cite{ErdPecRamSchYau2010, ErdSchYau2011} also apply to certain ensembles 
with independent but not identically distributed entries, which however retain the mean-field character of Wigner matrices.
 For generalized Wigner matrices with entries having varying variances,
but still following the semicircle law,  \nc
 see \cite{ErdYauYin2012Rig}, and more 
generally \cite{AjaErdKru2015}, where even the global density differs from the semicircle law. 
 In particular, the local law up to the smallest scale $N^{-1}$
can be obtained under the assumption that
the entries of $H$ satisfy 
\be\label{genwig}
s_{ij}:=\E(|H_{ij}|^2) \;\leq\; \frac{C}{N}
\ee
for some positive constant  $C$.   In this paper, we  assume that $\sum_i s_{ij}=1$;  this
 normalization guarantees 
that  the spectrum is supported on $[-2, 2]$. \nc
However, if the matrix entries vanish outside the band $|i-j|\lesssim W\ll N$,  (\ref{genwig})  cannot \nc hold and 
the best  known 
 local semicircle law in this context \cite{ErdKnoYauYin2013} gives  estimates  only up to scale $W^{-1}$,
while the optimal scale would be $N^{-1}$,
 comparable with the eigenvalue spacing. 
Hence for $W = N^{1-\delta}$, $\delta>0$, 
the optimal local law is not known  up to the smallest scale,
which is a key source of difficulty for proving the delocalization
of the band matrices. 
In this article, as  $W = c N$ for some fixed small constant $c$, the local law holds up to the optimal scale.

While  step (i) for the three-step  strategy holds in this paper, 
  steps (ii) and (iii) present a key hurdle to prove 
the universality.   To explain this difficulty, consider Gaussian divisible  matrices of the form 
$H_0+   {\rm GOE}(t)$,  where $H_0$ is an arbitrary Wigner matrix and \nc ${\rm GOE}(t)$  is a $N\times N$
Gaussian orthogonal ensemble with matrix entries given by independent Brownian motions (up to the symmetry requirement)
 starting from 0.  
For any fixed time $t$,   ${\rm GOE}(t)$ is a GOE matrix ensemble with variances 
of the matrix entries proportional to $t$.  The basic idea for  step (ii) is to prove the universality for
 matrices  of the form \nc $H_0+   {\rm GOE}(t)$ for $t$ small, say, $t = N^{-1 + \e}$ for some 
  $\e>0$.  \nc Finally, in step (iii), one shows that the eigenvalue statistics of the 
original matrix $H$ can be approximated by $H_0+   {\rm GOE}(t)$  for a good choice of $H_0$. For $0\leq\e<1/2$ and $H$ satisfying (\ref{genwig})  with a matching lower bound $s_{ij}\ge c/N$, $c>0$, up to a trivial rescaling 
 we can choose $H_0 = H$  \cite{BouYau2013}.  If $1/2\le\e<1$, more complicated arguments requiring matching higher moments of the matrix entries are needed  to choose an appropriate $H_0$ \nc \cite{ErdYauYin2012Univ}. 
Unfortunately, both methods for this third step depend on the fact that the second moments of the entries of the original matrix match those 
of   $H_0+{\rm GOE}(t)$,  up to rescaling. 
 For  band matrices, the variances outside the band vanish; therefore, the second moments of $H_0+   {\rm GOE}(t)$ and 
 the band matrix $H$ will never match outside the band. For the past years, this obstacle in step (iii) has  been  a major roadblock 
to extend the three-step strategy to the band matrices and to other non mean-field models. In this paper, 
we introduce a new method that overcomes this difficulty. In order to outline the main idea, we first need to describe the quantum unique ergodicity as proved in \cite{BouYau2013}. 
 
From the local law for band matrices \cite{ErdKnoYauYin2013} with $W=c N$, we have the  
complete delocalization of eigenvectors: 
with very high  probability 
$$
\max |\psi_k(i)|\leq \frac{(\log N)^{C \log \log N}}{\sqrt{N}},
$$
 where $C$ is a fixed constant and the maximum ranges over all coordinates $i$ of  all 
 the $\ell^2$-normalized eigenvectors, 
 $\b\psi_1,\dots,\b\psi_N$.
Although this bound prevents concentration of eigenvectors  onto a set of size less than
$N(\log N)^{- C \log\log N}$, it does not imply 
the ``complete flatness" of eigenvectors in the  sense that $|\psi_k(i)|\approx N^{-1/2}$. 
 Recall the quantum ergodicity theorem  
(Shnirel'man \cite{Shn1974}, Colin de Verdi\`ere \cite{Col1985} and  Zelditch \cite{Zel1987}) asserts  that 
``most" eigenfunctions for the Laplacian on a  compact Riemannian 
manifold with ergodic geodesic flow are completely flat.   
For  $d$-regular graphs under certain assumptions  on the injectivity radius and  spectral gap of the adjacency matrices, 
similar results were proved for eigenvectors of the adjacency matrices \cite{AnaLeM2013}.
A stronger notion of quantum ergodicity, the  quantum unique ergodicity (QUE) proposed by Rudnick-Sarnak \cite{RudSar1994} demands that    \emph{all} high energy eigenfunctions become completely flat, and it supposedly holds for negatively curved compact Riemannian manifolds. 
One case for which QUE was rigorously proved concerns arithmetic surfaces, 
thanks to tools from number theory and ergodic theory on homogeneous spaces 
\cites{Lin2006,Hol2010,HolSou2010}.

For Wigner matrices,  a probabilistic version of QUE  was settled 
in \cite{BouYau2013}. In particular, it is known that there exists $\e>0$ such that
for any  deterministic $1\leq j\leq N$ and $I\subset\llbracket 1,N\rrbracket$, for any $\delta>0$ we have
\begin{equation}\label{eqn:QUEintro}
 \P\left( \Big | \sum_{i\in I} |\psi_j(i)| ^2  - \frac{|I|}{N} \Big | \ge \delta\right)\le 
    N^{- \e}/\delta^2.  
\end{equation}
Our key idea for proving universality of band matrices is a \emph{mean-field reduction}.  In this method, the above probabilistic  QUE
will be a central tool.  To explain the mean-field reduction and its link with QUE,  we 
block-decompose \nc the band matrix   $H$ and its eigenvectors as
\be\label{H0}
   H=  \begin{pmatrix} A  & B^* \cr B & D  \end{pmatrix}, \quad 
   \b\psi_j:=   \begin{pmatrix}\b w_j \cr \b p_j \end{pmatrix},
\ee
where $A$ is a $W\times   W$ matrix. From the eigenvector equation $H \b\psi_j = \lambda_j \b\psi_j$ we have
\be\label{1100}
   \Big( A- B^* \frac{1}{D - \lambda _j }B\Big) \bw_j  = \lambda_j  \bw_j,
\ee
i.e. $\bw_j $ is an eigenvector of $ A- B^* (D  - \lambda_j)^{-1}B$, with corresponding eigenvalue $\lambda_j$. 
In agreement with the band structure, we may assume that the  matrix
elements of  $A$ do not vanish and thus the eigenvalue  problem  in \eqref{1100} features a mean field random matrix 
(of smaller size).  \nc

For a  real parameter $e$,
consider the following matrix
\be\label{Qdefnew}
   Q_e =  A
 - B^* \frac{1}{D-e}B,
\ee
and  let   $\xi_k (e)$, $\bu_k (e)$ be its sequence of eigenvalues and eigenvectors:
$
   Q_e \b u_k(e) = \xi_k(e) \b u_k(e).
$
Consider the curves $e \to \xi_k(e)$ (see Figure \ref{Fig1}).  By definition, 
the intersection points of these curves with the diagonal $e= \xi$ 
are  eigenvalues for $H$, i.e.,  given $j$,  we have
 $\xi_k (\lambda_j) = \lambda_j$ for some $k$. 
From this relation, we can find the eigenvalue $\lambda_j$ near an energy $e$ 
from the values of $\xi_k(e)$ provided that we know the slope of the curves $e \to \xi_k(e)$. It is a simple computation that 
this slope is given by $1-(\sum_{i=1}^W \left| \psi'_j (i) \right|^2)^{-1}$, where $\b\psi'_j$ is the eigenvector of  
$ H_e$ where $H_e$ is the same as $H$ except $D$ is replaced by $D-e$
(see Subsection \ref{subsec:sketch} for details). If the QUE in the sense of \eqref{eqn:QUEintro} holds for $\b\psi_j'$, then 
$\sum_{i=1}^W \left| \psi'_j (i) \right|^2 \sim W/N$ and the leading order of the slope is a constant, independent of $k$. Therefore, the statistics of 
$\lambda_j$ will be given by those of $\xi_k$ up to a trivial scaling factor.  
Since $\xi_k$'s are eigenvalues of a mean field random matrix, thanks to $A$, 
the universal statistics of $\xi_k$  will follow from previous methods. \nc

\begin{figure}
\centering
\begin{subfigure}{.4\textwidth}
  \centering
\begin{tikzpicture}
\node[anchor=south west,inner sep=0] (x) at (0,0) {\includegraphics[width=7cm]{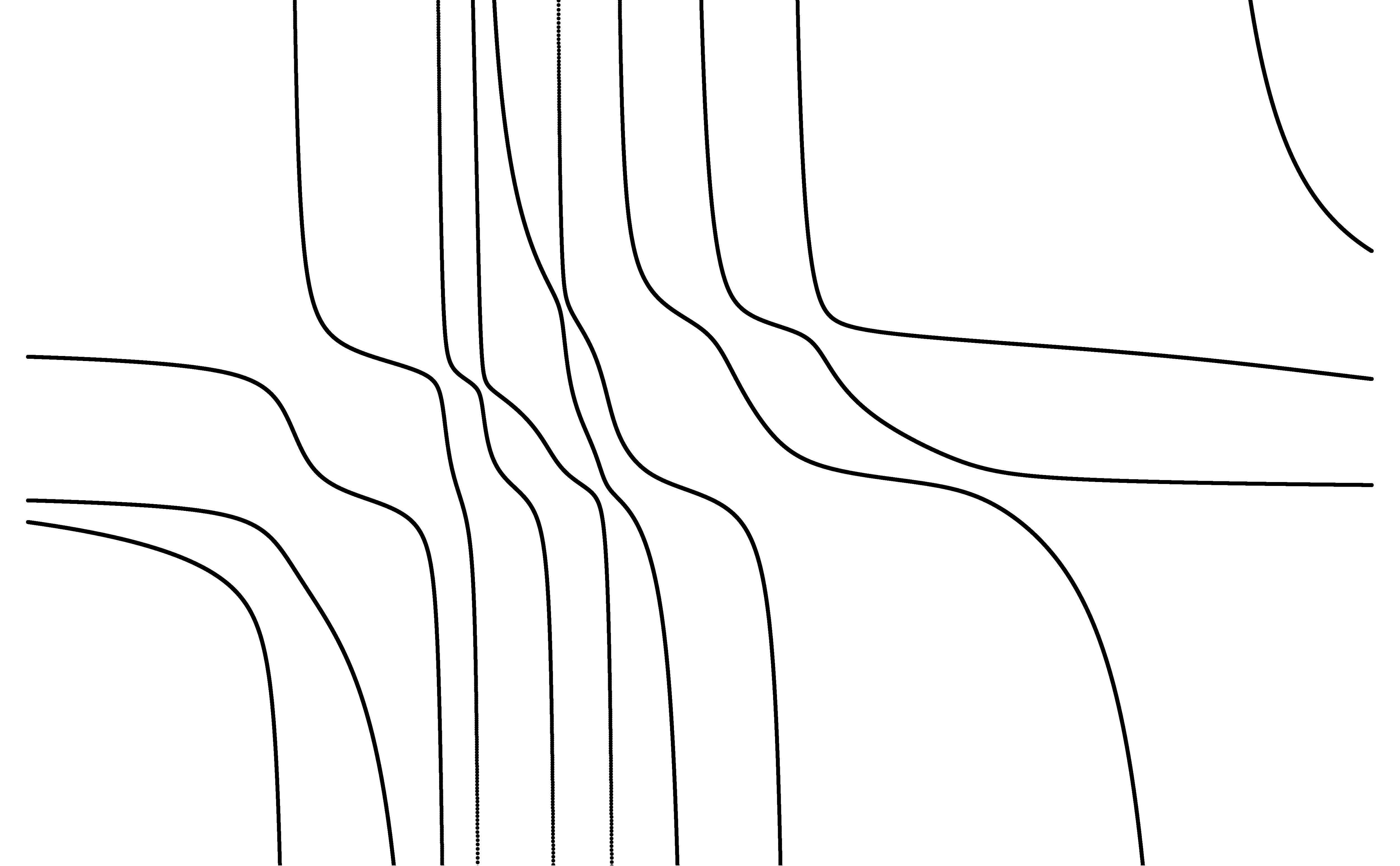}};
\begin{scope}[x={(x.south east)},y={(x.north west)}]
\draw[black,thick,rounded corners] (0.5,0.55) rectangle (0.64,0.75);
\draw [black,->] (0,0.5) -- (1.02,0.5);
\draw [black,->] (0.47,0) -- (0.47,1);
\draw [black,dashed] (0.47,0.5) -- (0.81,1);
\draw [black,dashed] (0.47,0.5) -- (0.13,0);
\fill[black] (0.198,0.1)  circle[black,radius=1.5pt];
\fill[black] (0.26,0.19)  circle[black,radius=1.5pt];
\fill[black] (0.313,0.265)  circle[black,radius=1.5pt];
\fill[black] (0.339,0.305)  circle[black,radius=1.5pt];
\fill[black] (0.388,0.378)  circle[black,radius=1.5pt];
\fill[black] (0.423,0.426)  circle[black,radius=1.5pt];
\fill[black] (0.435,0.445)  circle[black,radius=1.5pt];
\fill[black] (0.452,0.468)  circle[black,radius=1.5pt];
\fill[black] (0.52,0.58)  circle[black,radius=1.5pt];
\fill[black] (0.555,0.622)  circle[black,radius=1.5pt];
\fill[black] (0.585,0.665)  circle[black,radius=1.5pt];
\fill[black] (0.198,0.1)  circle[black,radius=1pt];
\node[black] at (1.05,0.5) {$e$};
\end{scope}
\end{tikzpicture}
  \caption{A simulation of  eigenvalues of $Q_e=A-B^*(D-e)^{-1}B$, i.e. functions $e\mapsto \xi_j(e)$. Here $N=12$ and $W=3$. The $\lambda_i$'s are the abscissa of the intersections with the diagonal.}
  \label{fig:sub1}
\end{subfigure}%
\begin{subfigure}{.1\textwidth}
  \centering
\begin{tikzpicture}
\draw [black,dashed] (0,0) -- (0,0);
\end{tikzpicture}
\end{subfigure}%
\begin{subfigure}{.4\textwidth}
\centering
\vspace{-1.1cm} \hspace{-2cm}
\begin{tikzpicture}
\node[anchor=south west,inner sep=0] (y) at (0,0) {\includegraphics[width=8.5cm]{C.jpg}};
\begin{scope}[x={(y.south east)},y={(y.north west)}]
\draw[white,ultra thick,fill=white]  (0,0) rectangle (1,1);
\draw [black,-,thick,dashed] (0.21,0.16) -- (0.65,0.8);
\draw [black,-,thick,dashed] (0.37,0.16) -- (0.37,0.8);
\draw [black,-,thick] (0.1,0.7) -- (0.8,0.3);
\draw [black,-,thick] (0.1,0.8) -- (0.8,0.4);
\draw [black,-,thick] (0.1,0.93) -- (0.8,0.53);
\draw [black,-,thick] (0.1,1) -- (0.8,0.6);
\draw [black,-,thick] (0.1,0.64) -- (0.8,0.24);
\draw [black,-,thick] (0.1,0.5) -- (0.8,0.1);
\draw [black,-,thick] (0.1,0.4) -- (0.8,0.02);
\draw [black,-,thick] (0.1,0.32) -- (0.63,0.04);
\fill[black] (0.26,0.235)  circle[black,radius=1.8pt];
\fill[black] (0.3,0.29)  circle[black,radius=1.8pt];
\fill[black] (0.35,0.36)  circle[black,radius=1.8pt];
\fill[black] (0.415,0.46)  circle[black,radius=1.8pt];
\fill[black] (0.445,0.5)  circle[black,radius=1.8pt];
\fill[black] (0.495,0.573)  circle[black,radius=1.8pt];
\fill[black] (0.56,0.665)  circle[black,radius=1.8pt];
\fill[black] (0.595,0.715)  circle[black,radius=1.8pt];
\draw[black,fill=black] (0.37,0.775)  +(-1.3pt,-1.3pt) rectangle +(1.3pt,1.3pt) ;
\draw[black,fill=black] (0.37,0.645)  +(-1.3pt,-1.3pt) rectangle +(1.3pt,1.3pt) ;
\draw[black,fill=black] (0.37,0.545)  +(-1.3pt,-1.3pt) rectangle +(1.3pt,1.3pt) ;
\draw[black,fill=black] (0.37,0.485)  +(-1.3pt,-1.3pt) rectangle +(1.3pt,1.3pt) ;
\draw[black,fill=black] (0.37,0.345)  +(-1.3pt,-1.3pt) rectangle +(1.3pt,1.3pt) ;
\draw[black,fill=black] (0.37,0.255)  +(-1.3pt,-1.3pt) rectangle +(1.3pt,1.3pt) ;
\draw[black,fill=black] (0.37,0.18)  +(-1.3pt,-1.3pt) rectangle +(1.3pt,1.3pt) ;
\draw [black,->](0.37,0.845)-- (0.495,0.774);
\filldraw[white,fill=white]
(0,0) -- (0,1) -- (1,1) -- (1,0) -- cycle
(0.2,0.15) -- (0.65,0.15) -- (0.65,0.8) -- (0.2,0.8) -- cycle;
\draw[black,thick,rounded corners] (0.2,0.15) rectangle (0.65,0.8);
\draw [black,->,thick] (0.37,0.05) -- (0.37,0.13);
\node[black] at (0.37,0.02) {$e$};

 \draw [black,->,thick] (0.31,0.05) -- (0.31,0.13);
\node[black] at (0.31,0.02) {$\lambda'$};

\draw [black,->,thick] (0.26,0.05) -- (0.26,0.13);
\node[black] at (0.26,0.02) {$\lambda$};

\draw [black,->](0.37,0.775)-- (0.475,0.715);
\draw [black,->] (0.37,0.645) -- (0.44,0.605);
\draw [black,->] (0.37,0.545)-- (0.415,0.52);
\draw [black,->] (0.37,0.49)-- (0.398,0.468);
\draw [black,->] (0.37,0.255) -- (0.33,0.274);
\draw [black,->] (0.37,0.18) -- (0.31,0.209);
\end{scope}
\end{tikzpicture}
  \caption{Zoom into the framed region of Figure (a), for large $N,W$: the curves $\xi_j$ are almost parallel, with  slope about $1-N/W$.  The eigenvalues of  $A-B^*(D-e)^{-1}B$ and those of $H$ are related by a
  projection to the diagonal followed by a projection to
  the horizontal axis.
 }
   \label{fig:sub1}
\end{subfigure}

\caption{The idea of mean-field reduction: universality of gaps between eigenvalues for fixed $e$ implies universality on the diagonal through parallel projection.}
\label{Fig1}
\end{figure}

To summarize, our idea is to use the mean-field reduction  to convert the  problem of 
 universality of the band matrices ($H$) to a 
matrix ensemble ($Q_e$) of the form $A+R$ with $A$ a Wigner ensemble of the size of the band, independent of $R$. The key input for this mean-field reduction is the 
QUE for the big band matrix. This echoes the folklore belief that delocalization (or QUE) and random matrix statistics occur simultaneously. 
In fact, this is the first time that universality of random matrices is proved via QUE. We wish to emphasize that, as a tool for proving universality, we will need 
QUE while quantum ergodicity is not strong enough. 

In order to carry out this idea, we need  $(i)$ to prove the QUE \eqref{localque0} for the band matrices;
 $(ii)$ to  show 
that the eigenvalue statistics of $Q_e$ are universal. The last problem was recently studied in \cite{ES,LY} which can be applied to the current setting once some basic estimate for $Q_e$ is obtained. The QUE for the band matrices, however, is a difficult problem.  The method 
in \cite{BouYau2013} for proving QUE  depends  on analysis of the flow of the eigenvectors $H_0+   {\rm GOE}(t)$ and on the comparison between  the eigenvectors
of this matrix ensemble and those of the original  matrices. Once again, 
due to vanishing matrix elements in $H$, \nc we will not be able to use the comparison idea and the method  
in \cite{BouYau2013} cannot be applied directly.  Our  idea to resolve this difficulty is
 to use again the mean field reduction, this time for eigenvectors,  and 
consider the eigenvector of the matrix $Q_e$. 
Recall the decomposition (\ref{H0}) of the band matrix.  From (\ref{1100}), 
$\bw_j$ is an eigenvector to $Q_{\lambda_j}$.  Temporarily neglecting the fact  that $\lambda_j$ is random, we will prove that QUE holds 
for $Q_{e}$ for any $e$ fixed  and thus $\bw_j$ is completely flat.  This implies that   the first $W$ indices of $\b\psi_j$ are completely flat. 
We now apply this procedure  inductively  to the  decompositions of the band matrix 
where the role of $A=A_m$ will be played by the $W\times W$ minor on the diagonal of $H$ between
indices $m W/2+1$ and $(m+1)W/2$,
 where  $m=0, \ldots, (2N-W)/W$ is an integer. Notice that the successively considered blocks $A_1, A_2, \ldots, A_m$
  overlap to guarantee consistency. \nc
Assuming QUE holds in each decomposition,  we have concluded that $\b\psi_j$ is completely flat by this {\rm patching} procedure. This supplies the QUE we need for the band matrices,
provided that we can resolve the technical problem that we need these results for 
 $e= \lambda_j$, which is random.   The resolution of this question relies on a new tool in analyzing non mean-field random matrices:   an {\it uncertainty principle} 
asserting  that  whenever a vector is nearly  an eigenvector, it  is  delocalized on macroscopic scales. This extends 
the delocalization estimate for eigenvectors  to approximate eigenvectors 
and is of independent interest. This will be presented in Section 3.\\

\noindent {\it Convention.} We denote $c$ (resp. $C$) a small (resp. large) constant which may vary from line to line but does not depend on other parameters. By $W=\Omega(N)$ we mean $W\geq c N$
 and $\llbracket a, b\rrbracket:=[a,b]\cap \Z$ refers to all integers between $a$ and $b$. \nc

\section{Main results and sketch of the proof}\label{sec:main}

\subsection{The model and the results.\ }
Our method mentioned in the introduction applies to all symmetry classes, but for definiteness we will discuss 
the real symmetric case (in particular all eigenvectors are assumed to be real). 
Consider an $N\times N$ \nc band matrix  $H $ with real centered  entries that are \nc
independent up to the symmetry condition, and band width $4W-1$ (for notational convenience later in the paper) such that $N=   2 Wp$ with some fixed $p \in \N$,  i.e. in this paper we consider the case $W=\Omega(N)$. \nc
More precisely,   we assume that 
\begin{equation}\label{eqn:band1}
H_{ij} = 0, \text { if } |i-j| > 2W,
\end{equation}
where the distance  $| \cdot |$ on $\{1,2,\ldots , N\}$ 
 is defined by periodic boundary condition mod $N$. 
We   assume that  the variances  $s_{ij}:= \E ( H_{ij}^2)$ satisfy
\begin{equation}\label{sumsone}
    \sum_{j} s_{ij}=1 \;\; \text{for all $i$};
\end{equation}
\begin{equation}\label{bandcw}
  \frac{c}{W}\le s_{ij} \le \frac{C}{W},  \;\;  \text { if } |i-j| \le 2W
\end{equation}
for some positive constants.
For simplicity of the presentation, we assume identical variances within the band, i.e. we set
\begin{equation}\label{eqn:band2}
s_{ij}= \E  \left(H_{ij}^2\right)  = \frac{1}{4W-1}, \;\;  \text { if } |i-j| \le 2W.
\end{equation}
 but  our result with the same proof
holds under the general conditions \eqref{sumsone} and \eqref{bandcw}. 
We also assume that for some $\delta>0$ we have
\begin{equation}\label{eqn:subgaus}
\sup_{N,i,j}\E\left(e^{\delta W H_{ij}^2}\right)<\infty.
\end{equation}
This condition can be easily weakened to some finite moment assumption, we assume (\ref{eqn:subgaus}) mainly for the convenience of presentation. The eigenvalues of $H$ are ordered,
$
\lambda_1\leq\dots\leq \lambda_N
$,
and we know that the empirical spectral measure $\frac{1}{N}\sum_{k=1}^N\delta_{\lambda_k}$ converges
almost surely to the Wigner semicircle distribution with density
\begin{equation}\label{eqn:semicir}
\rho_{\rm sc}(x)=\frac{1}{2\pi}\sqrt{(4-x^2)_+}.
\end{equation}
Our main result is the universality of the gaps between eigenvalues:  finitely many
consecutive \nc  spacings between eigenvalues of $H$ have the same limiting distribution as 
 for the Gaussian Orthogonal Ensemble, ${\rm GOE}_N$, which is known as the
 multi-dimensional Gaudin distribution. \nc

\begin{theorem} \label{Univ}
Consider a band matrix $H$
satisfying \eqref{eqn:band1}--\eqref{eqn:subgaus} with parameters $N=2pW$. \nc
  For any fixed $\kappa>0$ and  $n\in\mathbb{N}$  there exists an $\e =\e (p,\kappa,n)>0$ such that
  for any smooth  and compactly supported  function $O$ \nc in $\R^n$, and $k\in \llbracket \kappa N, N-\kappa  N\rrbracket$ we have
\be
\bigg|  \left(\E^{H}- \E^{{\rm GOE}_N} \right)
 O \left(  N \rho_{\rm sc} (\lambda_k ) ( \lambda_{k+1}  - \lambda_{k}) ,\dots,  N  \rho_{\rm sc} ( \lambda_i ) 
( \lambda_{k+n}  - \lambda_{k+n-1} ) \right)  \bigg| \leq C_O   N^{-\e},
\ee
where the constant $C_O$ depends only on $\kappa$  and the test function $O$.
\end{theorem}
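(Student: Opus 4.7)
The plan is to execute the mean-field reduction sketched in the introduction. Block-decompose $H$ as in~\eqref{H0} with $A$ the top-left $W\times W$ corner, and study the family $Q_e = A - B^\ast (D-e)^{-1} B$ whose eigenvalue curves $e\mapsto \xi_k(e)$ cross the diagonal exactly at the eigenvalues $\lambda_j$ of $H$. Differentiating the eigenvalue relation and using first-order perturbation theory gives the slope
\[
\xi_k'(e)\;=\;1-\Big(\textstyle\sum_{i=1}^{W}|\psi'_j(i)|^2\Big)^{-1},
\]
where $\psi'_j$ is the eigenvector of the shifted matrix $H_e$. If the probabilistic QUE in the form~\eqref{eqn:QUEintro} holds for $\psi'_j$, the slope concentrates around the deterministic constant $1-2p$, so universality of finite-gap statistics for $\{\lambda_j\}$ around a bulk energy $e$ reduces, up to this constant Jacobian, to universality of finite-gap statistics for $\{\xi_k(e)\}$. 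The theorem therefore splits into two ingredients: (a) universality for $Q_e$ at a deterministic bulk energy $e$, and (b) QUE for the eigenvectors of $H$ and of the auxiliary matrices appearing in the reduction.

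For (a) I would exploit that $Q_e = A + R(e)$ with $A$ a genuine $W\times W$ Wigner matrix independent of $R(e) := -B^\ast(D-e)^{-1} B$. Since $W = \Omega(N)$, the local semicircle law for $H$ holds down to the optimal scale $N^{-1}$, which provides the Green-function bounds on $B$ and $D$ needed to control $R(e)$. Once these a priori bounds are verified, the universality of deformed Wigner ensembles of the form ``Wigner plus independent deformation'' developed in~\cite{ES,LY} applies directly and yields Gaudin statistics for the bulk gaps of $\xi_k(e)$ at fixed deterministic $e$.

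For (b) I would iterate the same mean-field reduction. At fixed deterministic $e$, QUE for the restriction of $\b\psi_j$ to the first $W$ coordinates reduces to QUE for an eigenvector of the mean-field matrix $Q_e$; here the Dyson-Brownian-motion argument of~\cite{BouYau2013} applies because a small Gaussian component can be injected \emph{only into the block $A$} and removed by a Green-function comparison whose second-moment matching condition is automatic within this mean-field block. Sliding the pivot block $A_m$ across the overlapping windows of indices $(mW/2,(m+1)W/2]$, $m=0,\dots,2p-1$, and patching the resulting local flatness estimates produces QUE for all coordinates of $\b\psi_j$.

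The main obstacle, and the place where genuine new input is needed, is that the slope identity requires QUE at the random energy $e=\lambda_j$, whereas the argument above only gives QUE at deterministic $e$; naive continuity in $e$ fails because of near-crossings of the curves $\xi_k(e)$. This is precisely where the uncertainty principle announced in Section~3 enters: any vector that is an approximate eigenvector of $Q_e$ must be macroscopically delocalized, and this stability, combined with resolvent estimates, is what allows one to transport the QUE bounds from deterministic energies to $e=\lambda_j$. Reconciling the independent sources of randomness in $Q_e$ and in $\lambda_j$ while avoiding circular dependence between the universality and the QUE steps is what I expect to be the decisive technical challenge.
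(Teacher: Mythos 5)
Your plan reproduces the paper's architecture essentially verbatim: mean-field reduction via the curves $e\mapsto\xi_k(e)$, the slope identity, universality of $Q_{e_0}$ at a deterministic bulk energy from the deformed-Wigner results of \cite{ES,LY}, QUE by patching overlapping $W\times W$ windows, and the uncertainty principle as the new input. But the step you yourself flag as ``the decisive technical challenge'' --- transporting QUE from deterministic energies to the random energy $e=\lambda_k$ --- is left open, and your stated cure for it is not the one that works. In the paper the uncertainty principle supplies only (i) the a priori bound on the slopes of the curves $\mathcal C_k$ (Lemma \ref{holo}, via \eqref{evbound}) and (ii) the operator bound on the generalized resolvent underlying the local law for $Q_e$ (Lemma \ref{lem: obG}). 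The actual passage to the random energy (Lemma \ref{QsM}) requires discretizing the window around $\gamma_k$ into a grid of mesh $N^{-1-\zeta}$, proving QUE simultaneously at all grid points and all nearby indices $j$ with $|j-k'|\le N^{\zeta}$ (which is why Lemma \ref{QsM2} must cover a whole range of indices, since $\hat k$ is itself random), and then integrating $\frac{\rd}{\rd e}\bigl(\b u_{k'}(e),J\,\b u_{k'}(e)\bigr)$ from the nearest grid point to $\lambda_k^{\b g}$. That derivative contains $\sum_{\ell\neq k}\left|\mathcal C_k(e)-\mathcal C_\ell(e)\right|^{-1}$ --- exactly the near-crossing divergence you mention --- and it is tamed not by the uncertainty principle but by the level repulsion estimate \eqref{lr} for $Q_e$ from Theorem \ref{thm:gap}, combined with the H\"older trick \eqref{weakz} in the integration variable. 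Without level repulsion this step does not close.

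A secondary inaccuracy concerns where the Gaussian component lives. The paper does not inject Gaussian noise ``only into the block $A$'': it takes the whole band Gaussian divisible, $H=\sqrt q\,H_1+\sqrt{1-q}\,H_2$ with $q=W^{-1+\theta}$, precisely so that every sliding window $A_m$ inherits a GOE component and the patching argument stays translation invariant. The Gaussian part is then removed at the very end by the continuity estimate for the band Dyson Brownian motion \eqref{eqn:generalDBM} (Lemma \ref{lem:continuity}), applied jointly to eigenvalues and eigenvector overlaps of the full band matrix --- not by a Green function comparison confined to the mean-field block. A perturbation supported only on $A$ would break the translation invariance used in the patching and would reintroduce the very comparison problem between a modified and the original band matrix that motivated the mean-field reduction in the first place.
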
 
Prior to our work, the only universality result for band matrices was given by T. Shcherbina in \cite{Sch2014}
via rigorous  supersymmetric analysis. The supersymmetric technique required complex hermitian symmetry,
Gaussian distribution and  a very specific variance structure   involving finitely many  blocks with
 i.i.d. matrix elements. Our theorem holds for a general distribution and for both  the
real symmetric and complex hermitian case. Moreover, no block structure or i.i.d. entries are required,
see remark after \eqref{eqn:band2}.

As mentioned in the introduction, a key ingredient for Theorem \ref{Univ}  is  the quantum unique ergodicity
of the eigenvectors of our band matrix model. In fact, we will need QUE for small perturbations of $H$ on the diagonal:
for any  vector $\b g= (g_1, \ldots, g_N )\in\RR^N $ we define 
\be\label{Hd}
H^{\b g}= H - \sum_{j=1}^N g_j \b e_j \b e_j^*,  
\ee
 where $\b e_j$ is the $j$-th coordinate vector. \nc
Let $\lambda_1^{\b g}\leq\dots\leq\lambda_N^{\b g}$ be the eigenvalues of $H^{\b g}$ and $  \b\psi_k^{\b g} $ be the
corresponding eigenvectors, 
i.e.
$
 H^{\b g} \b\psi_k^{\b g}= \lambda_k^{\b g}\b\psi_k^{\b g}$. 

\begin{theorem} \label{QUE-Band}  Consider a band matrix $H$
satisfying \eqref{eqn:band1}--\eqref{eqn:subgaus} with parameters $N=2pW$. Then for any  \nc small $\b g$, $H^{\b g } $ satisfies the QUE  in the bulk. More precisely,  there exists
 $\e,\zeta\nc>0$ such that for any fixed $\kappa>0$, there exists $C_{\kappa,  p \nc}>0$ such that for any $k\in \llbracket\kappa N, (1-\kappa)N\rrbracket$,  $\delta>0$, and $\b a\in [-1,1]^N$, we have 
\be\label{localque0}
\sup_{ \|{\b g}\|_\infty \le N^{-1+  \zeta } }  
 \P\left( \left| \sum_{i=1}^{N} \b a(i) \left( |\psi_k ^{\b g} (i)| ^2  - \frac1N\right) \right| \ge \delta\right)\le 
 C_{\kappa, p\nc}
    N^{-\e}/\delta^2.  
   \ee 
\end{theorem}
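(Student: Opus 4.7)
The plan is to establish Theorem \ref{QUE-Band} by combining the mean-field reduction sketched in the introduction with a patching argument across overlapping blocks, using an uncertainty principle to handle the fact that the relevant spectral parameter is random. The perturbation $\b g$ plays essentially no role: it only shifts diagonals by $O(N^{-1+\zeta})$, well within the regime where the local semicircle law for $H$, eigenvalue rigidity, and the Dyson Brownian motion machinery apply uniformly. I therefore sketch the argument for $H = H^{\b 0}$ and indicate the uniformity in $\b g$ at the end.

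Fix an index interval $I \subset \llbracket 1, N \rrbracket$ of length $W$ and decompose $H$ along $I$ and $I^c$ as in (\ref{H0}). Since the band width $4W-1$ exceeds $W$, every entry $H_{ij}$ with $i, j \in I$ lies in the band, so $A$ is a full Wigner block of size $W$, and independence of the matrix entries forces $A$ to be independent of $(B, D)$. For any real $e$, set $Q_e = A - B^*(D-e)^{-1}B = A + R_e$ with $R_e$ independent of $A$. The Schur complement identity shows that the restriction $\b w_j$ of an eigenvector $\b\psi_j$ of $H$ to $I$ is itself an eigenvector of $Q_{\lambda_j}$ with eigenvalue $\lambda_j$. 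The strategy is to prove QUE for $Q_e$ at any fixed deterministic $e$ in the bulk and then transfer the conclusion back to $\b w_j$.

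Conditional on $(B, D)$, the matrix $Q_e$ is a deformed Wigner ensemble: an independent-entry Wigner block $A$ plus a deterministic additive perturbation $R_e$. Using the local law for the band matrix $H$ to control $R_e$, one obtains a deformed local semicircle law for $Q_e$ on the optimal scale, isotropic delocalization of its eigenvectors $\b u_k(e)$, and rigidity of its eigenvalues $\xi_k(e)$. The eigenvector moment flow of \cite{BouYau2013} can then be applied to the Ornstein--Uhlenbeck perturbation $A_t + R_e$ at time $t = W^{-1+\e}$, and, after a Green function comparison removing the small Gaussian component, yields QUE for $\b u_k(e)$ at any deterministic $e$ in the bulk, in the variance form of (\ref{eqn:QUEintro}). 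To pass from deterministic $e$ to the random $e = \lambda_j$, I invoke the uncertainty principle announced in the introduction and developed later in the paper: any unit vector $\b v$ with $\|(Q_e - \xi)\b v\|$ much smaller than the local eigenvalue spacing must essentially coincide with a true eigenvector of $Q_e$ and inherit its delocalization. Concretely, I cover the bulk by a grid $\{e_\ell\}$ with spacing $N^{-C}$; a resolvent expansion in $e$ combined with rigidity identifies $\b w_j$ with some $\b u_{k(j)}(e_\ell)$ up to negligible error, so QUE transfers to $\b w_j$ via a union bound over $\ell$.

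To recover QUE on the full coordinate set, I repeat the reduction with the overlapping windows $I_m = \llbracket mW/2+1, mW/2+W \rrbracket$ (mod $N$) for $m = 0, 1, \ldots, 4p-1$. Each $m$ yields QUE for the restriction of $\b\psi_j$ to $I_m$; because consecutive windows overlap on an interval of length $W/2$, the resulting $\ell^2$-mass estimates are mutually consistent and summing them over $m$ delivers the global bound (\ref{localque0}) for any $\b a \in [-1,1]^N$. The diagonal perturbation $\b g$ with $\|\b g\|_\infty \le N^{-1+\zeta}$ is absorbed into each $A_m$ and $D$ and changes none of the local laws, rigidity, or DBM estimates. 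The principal obstacle is the uncertainty principle itself: it must hold uniformly over all candidate approximate eigenvectors, not only over the true eigenvectors whose delocalization is already supplied by the isotropic law, and it is precisely where the specific band structure (as opposed to a generic mean-field model) is genuinely used. This quantitative delocalization of approximate eigenvectors is the technical heart of the argument; given it, the remaining steps are a careful assembly of the mean-field reduction with the existing DBM and Green function comparison technology.
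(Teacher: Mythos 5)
Your overall architecture matches the paper's: QUE for $Q_e=A+R_e$ at fixed $e$ via the eigenvector moment flow of \cite{BouHuaYau2016} (with the local law for $Q_e$ as input), transfer to the random energy $e=\lambda_k^{\b g}$, and a patching argument over windows overlapping by $W/2$ to fix the normalization $\|\b w_k\|^2\approx W/N$ and recover the full-vector statement. The patching step and the reduction to Gaussian-divisible entries followed by a comparison argument are also as in the paper.

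The genuine gap is in your transfer from deterministic $e$ to the random $e=\lambda_k^{\b g}$. You invoke ``the uncertainty principle'' to claim that any approximate eigenvector of $Q_e$ essentially coincides with a true eigenvector and inherits its delocalization; but the uncertainty principle proved in the paper (Proposition \ref{apL}) is a different statement --- it says approximate eigenvectors of $D^{\b g}$ carry non-negligible $\ell^2$-mass on the first $W$ coordinates, and it is used to bound the slopes of the curves $e\mapsto\mathcal{C}_k(e)$ and to prove the local law, not to identify approximate eigenvectors of $Q_e$ with exact ones. Moreover, the identification you want is simply false without a spectral gap: if $\xi_{k'}(e)$ and $\xi_{k'\pm1}(e)$ are anomalously close, an approximate eigenvector can be an arbitrary mixture of the two nearly degenerate eigenvectors, and $\b u_{k'}(e)$ is not stable as $e$ moves across your grid. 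The paper handles this by differentiating $(\b u_{k'}^{\b g}(e),J\,\b u_{k'}^{\b g}(e))$ in $e$, which produces resonance denominators $|\mathcal{C}_k^{\b g}(e)-\mathcal{C}_\ell^{\b g}(e)|^{-1}$; the distant terms are summable by rigidity, but the nearest-neighbour terms $\ell=k\pm1$ require the level repulsion estimate \eqref{lr} of Theorem \ref{thm:gap} (giving $\E|\mathcal{C}_k-\mathcal{C}_{k\pm1}|^{-3/2}\le CN^{3/2+\omega}$), combined with a H\"older argument over the short random integration interval $[e_{\wt m},\lambda_k^{\b g}]$. This level repulsion input, which is what actually makes the grid/stability argument close, is absent from your proposal, and rigidity plus a resolvent expansion cannot substitute for it.
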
 

\noindent For the simplicity of exposition, 
we have stated the above result  for QUE  only at macroscopic scales (i.e.,  by choosing a bounded test vector $\b a$),  while it holds at any scale (like in \cite{BouYau2013}). The macroscopic scale will be enough for our proof of Theorem \ref{Univ}.

\subsection{Sketch of the proof.\ } \label{subsec:sketch}
In this outline of  the proof, amongst other things we explain why QUE for small diagonal perturbation $H^{\b g}$ of $H$ 
is necessary to our mean-field reduction strategy. The role of other tools such as the uncertainty principle and the local law is also  
enlightened  below.

We will first need some notation: we decompose $H^{\b g}$ and its eigenvectors as
\be\label{H}
   H^{\b g} :=  \begin{pmatrix} A^{\b g}  & B^* \cr B & D^{\b g} \end{pmatrix}, \quad 
\b\psi_k^{\b g}=   \begin{pmatrix} \b w_k^{\b g} \cr \b p_k^{\b g}\end{pmatrix}, \quad k=1,2,\ldots, W,\nc
\ee
where $A^{\b g}$ is a $W\times   W$ matrix.  
The equation $H^{\b g}\b\psi_k^{\b g}=\la_k^{\b g}\b\psi_k^{\b g}$ then gives
\be\label{110}
   \Big( A^{\b g}- B^* \frac{1}{D^{\b g} - \lambda_k^{\b g} }B\Big) \bw_k^{\b g}  \nc = \lambda_k^{\b g}  \bw_k^{\b g},
\ee
i.e. $\bw_k^{\b g}, \lambda_k^{\b g}$ are the eigenvectors and eigenvalues of $Q^{\b g}_{\la_k^{\b g}}$ where we define
\be\label{Qdef}
   Q^{\b g}_e :=  A^{\b g} 
 - B^* \frac{1}{D^{\b g}-e}B
\ee
for any real parameter $e$.
Notice that $A^{\b g}$ depends only on $g_1,\cdots, g_{W}$ and $D^{\b g}$ depends only on $g_{W+1,}\dots, g_{N}$.
Let $\xi^{\b g}_1(e)\leq\dots\leq\xi^{\b g}_W$ be the ordered sequence of eigenvalues of $Q_e^{\b g}$ and $\bu^{\b g}_k (e)$ the corresponding eigenvectors:
\be\label{Qu}
   Q^{\b g}_e \b u^{\b g}_k(e) = \xi^{\b g}_k(e) \b u^{\b g}_k(e).
\ee
We will be  interested in a special class $g_i = g \1_{i>W}$ for some  $g\in\R$, and we denote the matrix
\be
  H^g  : =  \begin{pmatrix} A  & B^* \cr B & D-g \end{pmatrix},
\label{meq}
\ee
and let $\psi_j^g$, $\lambda_j^g$ be its eigenvectors and eigenvalues. \nc

\bigskip 
\noindent{\it First step:  From QUE   of $H^g$  to universality  of $H$ by  mean-field reduction.} 
Following Figure \ref{Fig1}, we obtain  eigenvalue statistics of $H$ by parallel projection.
Denote $\mathcal{C}_1,\dots,\mathcal{C}_N$ the continuous curves depicted in Figure \ref{fig:sub1}, labelled
in increasing order of their intersection with the diagonal (see also Figure \ref{Fig13} and Section \ref{sec:universality} for a formal definition of these curves).

Assume we are interested in the universality of the gap $\lambda_{k+1}-\lambda_k$ for some fixed $k\in\llbracket \kappa N,(1-\kappa)N\rrbracket$, and let $\xi>0$ be a small constant. By some a priori local law, we
know $|\lambda_k-e_0|\leq N^{-1+\xi}$ for some deterministic $e_0$, with overwhelming probability.
Universality of the eigenvalue gaps around $\lambda_k$ then follows from two facts: (i) universality of gaps between eigenvalues of $Q_{e_0}$ in the local window $I=[e_0-N^{-1+\xi},e_0+N^{-1+\xi}]$, (ii) the lines $(e\mapsto \mathcal{C}_j(e))_{j=k, k+1}$ have  almost constant  identical  negative slope  in the window $e\in I$. \nc 

For (i), note that the $Q_{e_0}=A+R$ where $A$ is a mean-field, Wigner, random matrix and $R$ is independent of $A$. For such matrices, bulk universality is known \cites{LeeSchSteYau2015,ES,LY}. The key tools are some a priori 
 rigidity estimates  for the eigenvalues (see the fourth step), a coupling between Dyson Brownian motions \cite{BouErdYauYin2015} and H\"{o}lder estimates for a resulting parabolic equation \cite{ErdYau2012}.

For the key step (ii), the slopes are expressed through QUE properties of matrices of type $H^{g}$.
More precisely, first note that   any $e\in I$ can be written uniquely as
$$
e=\lambda_k^g+g
$$
 for some \nc $|g|\leq C N^{-1+\xi}$. Indeed,  this is true for  $e=\lambda_k$ with $g=0$,
 and the function $g\to \lambda_k^g+g$ has a regular inverse, since  \nc
 by perturbative calculus $\partial (\lambda_k^g+g)/\partial g=\sum_{i=1}^{W} \left|\psi_k^g( i) \right|^2$, which is larger than  \nc
 some deterministic $c>0$, with overwhelming probability, by the uncertainty principle detailed in the third step. Once such a writing of $e$ is allowed, differentiating in $g$ the identity $\mathcal{C}_k(\lambda_k^g+g)=\lambda_k^g$ (a direct consequence of (\ref{110})) gives
\begin{equation}\label{eqn:slope}
\frac{\partial}{\partial e}\mathcal{C}_k(e)=1-\left(\sum_{i=1}^{W} \left|\psi_k^g( i) \right|^2\right)^{-1}.
\end{equation}
As a consequence, using QUE in the sense of Theorem \ref{QUE-Band}, we know that $(\partial/\partial e)\mathcal{C}_k$ and $(\partial/\partial e)\mathcal{C}_{k+1}$ are almost constant, approximately
$1-(N/W)$. By parallel projection we obtain universality for $H$ 
from universality of $Q_{e_0}$.  In terms of scales, the average gap between eigenvalues of $Q_{e_0}$ around $e_0$
is $(W\rho_{\rm sc}(e_0))^{-1}$, hence the average gap $\lambda_{k+1}-\lambda_k$ is $(N\rho_{\rm sc}(e_0))^{-1}$ as expected.
This mean-field reduction strategy is detailed in Section \ref{sec:universality}.
\\

\noindent{\it Second step. Quantum unique ergodicity.}
The proof of Theorem \ref{QUE-Band} proceeds in four steps, with successive proofs of QUE for the following eigenvectors ($k'$ is the unique index such that $\xi_{k'}$ lies on the curve $\mathcal{C}_{k}$):
\begin{enumerate}[(i)]
\item $\bu_{k'}^{g}(e)$ ($\ba\in[-1,1]^W$);
\item $\bu_{k'}^{g}(\lambda_k^g)$ ($\ba\in[-1,1]^W$);
\item $\bw_k^{g}$ ($\ba\in[-1,1]^W$);
\item $\b\psi_k^{g}$ ($\ba\in[-1,1]^N$).
\end{enumerate}
 In the parentheses we indicated the type of test vectors used in the QUE statement. \nc

First, (i) is QUE for a  matrix of type 
$Q_{e}=A+R$ where $A$ is a mean-field, Wigner, random matrix and $R$ is independent of $A$. For such matrices, QUE is known from the work \cite{BouHuaYau2016}, which made  use of the local \nc eigenvector moment flow method from \cite{BouYau2013}. For this step, some a priori information on location of eigenvalues of $Q_e$ is necessary and given by the local law (see the the fourth step).

From (i) to (ii), some stability the eigenvectors of $Q_e$ is required as $e$  varies. \nc 
Accurate estimates on $(\partial/\partial e)\bu^{g}_{k'}(e)$ are given by 
the uncertainty principle (see the third step) and
rigidity estimates of the eigenvalues (see the fourth step).

From (ii) to (iii), note that $\bw_k^{g}$ and  $\bu_{k'}^{g}(\lambda_k^g)$ are collinear, so QUE for $\bw_k^{g}$ will be proved provided it is properly normalized: 
\begin{equation}\label{eqn:normalized}
\|\bw_k^{g}\|_{\ell^2}^2\approx W/N.
\end{equation}
This is proved by patching:  in (ii), choosing ${\b a}(i)=1$ for $i\in\llbracket 1,W/2\rrbracket$, $-1$ for $i\in\llbracket W/2 +1\nc ,W\rrbracket$, and using translation invariance in our problem, we have $\sum_{i\in\llbracket 1,W/2\rrbracket+\ell W/2}|\psi_k^g(i)|^2\approx\sum_{i\in\llbracket 1,W/2\rrbracket+(\ell+1) W/2}|\psi_k^g(i)|^2$ for any $\ell$, so that (\ref{eqn:normalized}) holds.

The final step from (iii) to (iv) is a simple consequence of translation invariance, as (iii) holds for any $W$ successive coordinates of $\b\psi^{g}_k$. These steps are detailed in Section \ref{sec:QUE}.
\\

\noindent{\it Third step. Uncertainty principle.} This important ingredient of the proof can be summarized as follows: any vector  approximately satisfying the eigenvector equation 
 of $H^{\b g}$ or $D^{\b g}$ is delocalized in the sense that macroscopic subsets of its coordinates carry a non-negligible portion of its $\ell^2$ norm (see Proposition \ref{apL} for a precise statement).
This information allows us to bound the slopes of the curves $e\mapsto \mathcal{C}_k(e)$ through (\ref{eqn:slope}). It is also important in the proof of the local law for matrices of type $Q_e$ (see Lemma \ref{lem: obG}).

The proof of the uncertainty principle relies on an induction on $q$, where $N=qW$, classical large deviation estimates and   discretization of the space \nc arguments. Details are given in Section \ref{sec:uncertainty}.\\

\noindent{\it Fourth step. Local law.}
The local law for matrices of type $Q_e$ is necessary for multiple purposes in the first two steps, most notably to establish  universality of eigenvalues in a neighborhood of $e$  and QUE for corresponding eigenvectors. 

Note that the limiting empirical spectral distribution of $Q_e$ is hard to be made explicit, and in this work we do not aim at describing it.
Instead, we only prove bounds on the Green's function of $Q_e$ {\it locally}, i.e. 
$$
(Q_e-z)^{-1}_{ij}\approx m(z)\delta_{ij},\ \ N^{-1+\omega}\leq {\rm Im}(z)\leq N^{-\omega},
$$
in the range when $|{\rm Re}(z)-e|$ is small enough.  Here $m(z)$ is the Stieltjes transform
of the limiting spectral density whose precise form is irrelevant for our work. 
This estimate is obtained from the local law for the band matrix $H$ \cite{ErdKnoYauYin2013} through Schur's complement formula.
This {\it local} a priori information on eigenvalues (resp. eigenvectors) is enough to prove universality by Dyson Brownian motion coupling (resp. QUE through the eigenvector moment flow) strategy. 
The proof of the local law is given in Section \ref{sec:local}.\\

In the above steps, we assumed that the entries of $H$ have distribution which is a convolution with a small normal component (a {\it Gaussian-divisible ensemble}), so that the mean-field matrices $Q_e$ are the result of a matrix Dyson Brownian motion evolution. This assumption is classically removed by density arguments such as the Green functions comparison theorem \cite{ErdYauYin2012Univ} or microscopic continuity of the Dyson Brownian motion \cite{BouYau2013}, as will be appearent later along the proof.

\section{Uncertainty principle}

\label{sec:uncertainty}

This section proves an uncertainty principle for our band matrices
satisfying \eqref{eqn:band1}--\eqref{eqn:subgaus}: if a vector approximately 
satisfies the eigenvalue equation, then 
  it is \nc delocalized on macroscopic scales.

\begin{proposition}\label{apL} Recall the notations (\ref{H}). There exists $\mu>0$ such that for any (small) $c>0$ and (large) $D>0$,  we have, for large enough $N$, 
\begin{align} \label{brownBand}
& \P\left(\exists e\in \R,  \exists \bu \in \R^{ N-W}, \exists \b g \in \R^N \;: \; 
\;\|\b g\|_\infty\le N^{-c}, \; \|\bu\|=1,\ 
  \|(D^{\b g}-e)\bu\|\le \mu,\ \sum_{1\le i\le W} |u_i|^2\le  \mu ^2 \right)\le N^{-D},\\
&\P\left(\exists e\in \R,  \exists \b g \in \R^N \;: \;  \;\|\b g\|_\infty\le N^{-c}, \;
  B^* \frac {\mu^2}{(D^{\b g}-e)^2} B \ge   \Big( B^*\frac{1}{D^{\b g}-e}B\Big)^2 
    + 1 \right)\le N^{-D}\label{brownBand2}
\end{align}
\end{proposition}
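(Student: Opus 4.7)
The plan is to prove both statements by reducing them to properties of the spectral subspaces of $D^{\b g}$, which are accessible through the local law of Section \ref{sec:local} via Schur complement identities. The constant $\mu$ will be fixed small enough to absorb various error terms.

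For \eqref{brownBand}: if $\bu$ is an approximate eigenvector of $D^{\b g}$ with $\|(D^{\b g}-e)\bu\| \le \mu$ and $\|\bu\|=1$, then by the spectral decomposition of $D^{\b g}$, the vector $\bu$ lies (up to $O(\sqrt{\mu})$ error in $\ell^2$) in the spectral subspace $V_I$ for the interval $I = [e-\sqrt{\mu}, e+\sqrt{\mu}]$. By the local law, $\dim V_I \approx \sqrt{\mu}\, W$. The claim that $\sum_{i=1}^W|u_i|^2 > \mu^2$ for every such $\bu$ reduces to lower-bounding the smallest singular value of the coordinate projection $P_{[1,W]}$ restricted to $V_I$, i.e.\ to the statement
$$\min_{\bu \in V_I,\ \|\bu\|=1}\ \sum_{i=1}^W |u_i|^2 \ \ge\ c\ >\ \mu^2$$
with overwhelming probability. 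This is a form of local delocalization for the band matrix $D^{\b g}$: spectral subspaces of small length must project non-degenerately onto any $W$-block of coordinates. One obtains it by combining (i) a lower bound on the Hilbert--Schmidt norm $\tr(P_I\, P_{[1,W]}\, P_I) \ge c\sqrt{\mu}\, W$, which follows from the local law's control of $\sum_{i=1}^W \im\,(D^{\b g}-e-\ii \sqrt{\mu})^{-1}_{ii}$, with (ii) the standard pointwise delocalization of individual eigenvectors $\phi_k^{\b g}$ of $D^{\b g}$. A net argument over $(e,\b g)$ with Lipschitz continuity closes the proof.

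For \eqref{brownBand2}, reduce to \eqref{brownBand}. Suppose the matrix inequality holds for some $e,\b g$. Then for some unit $\bv\in\R^W$ the vector $\bw := (D^{\b g}-e)^{-1}B\bv$ satisfies $\mu^2\|\bw\|^2 \ge \|B^*\bw\|^2 + 1$ and $(D^{\b g}-e)\bw = B\bv$. Since $\|\bw\| \ge 1/\mu$, setting $\hat\bw := \bw/\|\bw\|$ gives $\|(D^{\b g}-e)\hat\bw\| = \|B\bv\|/\|\bw\| \le C\mu$ (using $\|B\|\le C$ with overwhelming probability) and $\|B^*\hat\bw\| \le \mu$. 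By the band structure, the columns of $B$ are supported on rows $\{1,\dots,2W\} \cup \{N-3W+1,\dots,N-W\}$, and on this support $B$ is a near-square random matrix whose smallest singular value is $\ge c$ with overwhelming probability; hence $\|B^*\hat\bw\| \le \mu$ forces $\sum_{i=1}^W|\hat w_i|^2 \le C\mu^2$ (the contribution from the periodic rows is absorbed by applying the same reasoning to the other $W$-block). This contradicts \eqref{brownBand}.

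The main obstacle is the local delocalization estimate in (i)--(ii) above, i.e.\ the lower bound on the smallest singular value of $P_{[1,W]}|_{V_I}$. It goes beyond the standard $\ell^\infty$ bound on individual eigenvectors and requires simultaneous control over all eigenvectors with eigenvalues in $I$. Its proof ultimately reduces to the local law for $H^{\b g}$ together with Schur complement identities linking the resolvent of $H^{\b g}$ to that of $D^{\b g}$, with uniformity in $(e,\b g)$ obtained via a net argument and Lipschitz continuity.
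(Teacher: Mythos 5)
Your reduction of \eqref{brownBand} to a lower bound on the smallest singular value of the coordinate projection $P_{[1,W]}$ restricted to the spectral subspace $V_I$ of $D^{\b g}$ is where the proof breaks down, and the two ingredients you offer cannot close it. The trace bound $\tr(P_I P_{[1,W]} P_I)\ge c\sqrt{\mu}\,W$ controls only the \emph{sum} of the eigenvalues of $P_I P_{[1,W]} P_I$, i.e.\ the average of $\sum_{i\le W}|\phi_k(i)|^2$ over eigenvectors $\phi_k$ of $D^{\b g}$ with eigenvalue in $I$; it is perfectly consistent with one direction in $V_I$ carrying zero mass on $\llbracket 1,W\rrbracket$. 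The $\ell^\infty$ delocalization of individual eigenvectors gives \emph{upper} bounds on coordinates and says nothing about cancellations in linear combinations of the $\sim\sqrt{\mu}\,W$ eigenvectors spanning $V_I$ --- and ruling out such cancellations is precisely the content of the proposition, so the "main obstacle" you flag is in fact the whole statement. (A secondary issue: you invoke a local law for the minor $D^{\b g}$, a non-periodic band matrix with defective row-variance sums, to which Theorem \ref{thm: with gap} does not directly apply.) The paper's mechanism is genuinely different and never proves delocalization of spectral subspaces of $D^{\b g}$ alone: it runs an induction over the $W\times W$ blocks $H_m$ of \eqref{fry}. If an approximate eigenvector of $H_m$ had small mass on its first $W$ coordinates, its tail $\wt\bv$ would be an approximate eigenvector of $H_{m+1}$ that is in addition nearly annihilated by the \emph{independent random} block $B_{m+1}^*$; by the induction hypothesis $\wt\bv$ has order-one weighted mass, after spectral projection it lives in a subspace of dimension $\alpha L$ with $\alpha$ small (by the eigenvalue non-concentration \eqref{apriori2}, not a full local law), and the anti-concentration estimate of Lemma \ref{basic lem} --- a Chernoff bound over the independent columns of $B_{m+1}$ combined with an $\e$-net over that low-dimensional subspace --- shows this event has probability $e^{-cL}$. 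The randomness of the coupling blocks, which your argument never uses, is the engine of the proof.

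Your derivation of \eqref{brownBand2} also contains a false step. Restricted to the support of its rows, $B$ is a $4W\times W$ matrix (the band of width $4W-1$ touches $2W$ rows on each side of the cut), so $B^*$ acting on that support is $W\times 4W$ and has a kernel of dimension $3W$; consequently $\|B^*\hat\bw\|\le\mu$ does \emph{not} force $\sum_{i\le W}|\hat w_i|^2\le C\mu^2$, no matter how well-conditioned $B$ is as a map from $\R^W$. The correct logic (Corollary \ref{prop:uncert3}) runs in the opposite direction: \eqref{brownBand} (i.e.\ Lemma \ref{general-uncertainty} with $m=1$) forces $\hat\bw$, being an approximate eigenvector of $D^{\b g}$, to carry mass at least $\mu_0^2$ on $\llbracket 1,W\rrbracket$; this activates the weighted-mass hypothesis of Proposition \ref{prop:uncert1}, and it is that anti-concentration statement --- not \eqref{brownBand} itself --- which then contradicts $\|B^*\hat\bw\|\le\mu$ together with $\|(D^{\b g}-e)\hat\bw\|\le K\mu$.
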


This  proposition gives  useful information for two purposes.
\begin{enumerate}[(i)]
\item An a priori bound on the slopes of lines $e\mapsto \mathcal{C}_k^{\b g}(e)$ (see  Figure \ref{Fig13} in Section \ref{sec:universality}) will be provided  by inequality (\ref{brownBand2}).
\item The proof of the local law for the matrix $Q_e^{\b g}$ will require  the uncertainty principle (\ref{brownBand}).
\end{enumerate}

\noindent For the proof, we first  consider general random matrices in Subsection \ref{subsec:prelest}
before making an induction on the size of some blocks in Subsection \ref{subsec:induction}.

\subsection{Preliminary estimates. }\label{subsec:prelest}
 In this subsection, we consider a random matrix $B$ of dimension $L\times M$ and a Hermitian matrix
  $D$ of dimension $L\times L$ matrix where $L$ and $M$ are comparable. 
   We have the decomposition~\eqref{H0} in mind  and in the next subsection
   we will apply the results of this subsection, 
Lemma  \ref{basic lem} and Proposition \ref{prop:uncert1}, for 
 $M=W$ and $L=k W$ 
with  some $k\in\llbracket 1,2p-1\rrbracket$.
We assume that $B$ has real independent, mean zero entries and, similarly to (\ref{eqn:subgaus}),
\begin{equation}\label{eqn:subgausB}
\sup_{M,i,j}\E\left(e^{\delta M B_{ij}^2}\right)<C_\delta<\infty
\end{equation}
for some $\delta,C_\delta>0$. In particular, we have the following bound:
\begin{equation}\label{eqn:boundC}
\sup_{M,i,j} s_{ij}<\frac{C_\delta}{\delta M}, \qquad {\rm where}\ s_{ij}:=\E\left(|B_{ij}|^2\right).
\end{equation}
The main technical tool, on which the whole section relies, is the following  lemma. 

\begin{lemma}\label{basic lem}
Let $B$ be an $L\times  M$ random matrix  satisfying the above assumptions and set $\beta := M/L$.
 Let $S$ be a subspace of  
$\R^L$ with $\mbox{dim}\, S =: \alpha L$.  Then for any given $\gamma$ and $\beta$, for small enough positive $\alpha$, we have   
\be\label{Bu2}
    \P \Big( \exists \bu \in S\; : \; \|\bu\|=1, \;
\| B^*\bu\| \le \sqrt\gamma/4 , \; \mbox{and} \;   \min_{1\leq  j\leq M  }    \sum_{i=1}^L s_{ij} |u_i|^2 \ge \gamma  M^{-1}
\Big) \le e^{-cL}
\ee
for large enough $L$.  Here $0<\alpha<\alpha_0(\beta,\gamma,\delta,C_\delta)$  and $c=c(\alpha,\beta, \gamma,\delta, C_\delta)>0$.
\end{lemma}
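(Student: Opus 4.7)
The natural strategy is an $\epsilon$-net argument on the unit sphere of the low-dimensional subspace $S$, combined with a one-vector sub-exponential lower tail bound for $\|B^*\bu\|^2$. The restriction that $\alpha$ be small is exactly what allows the entropy of $S\cap S^{L-1}$ to be beaten by the single-vector concentration.

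\emph{Step 1: concentration for a fixed unit vector.} Fix $\bu\in S^{L-1}$ with the variance condition $\sigma_j^2(\bu):=\sum_i s_{ij}|u_i|^2 \ge \gamma/(2M)$ for every $j$. Write $\|B^*\bu\|^2 = \sum_{j=1}^M Z_j^2$ with $Z_j := \sum_i B_{ij}u_i$; the $Z_j$'s are independent because the columns of $B$ are independent, each has mean zero and variance $\sigma_j^2(\bu)$, and by \eqref{eqn:subgausB} each $Z_j$ is sub-Gaussian with parameter $\lesssim \sigma_j(\bu)$. The bound \eqref{eqn:boundC} gives $\sigma_j^2(\bu)\le C/M$, so each $Z_j^2-\sigma_j^2(\bu)$ is sub-exponential with parameter $\lesssim 1/M$ and $\sum_j \sigma_j^2(\bu)\le C/\beta$ is bounded. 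A standard Bernstein lower-tail bound, together with $\sum_j \sigma_j^2(\bu)\ge \gamma/2$, then yields
\[
\P\pa{\|B^*\bu\|^2 \le \gamma/8} \le \exp\pa{-c\,M\gamma^2} \le e^{-c_1 L},
\]
where $c_1=c_1(\beta,\gamma,\delta,C_\delta)>0$.

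\emph{Step 2: operator-norm control.} Classical sub-Gaussian matrix estimates give $K=K(\beta,\delta,C_\delta)$ and $c_0>0$ with
\[
\P\pa{\|B\|_{\mathrm{op}} > K} \le e^{-c_0 L}.
\]

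\emph{Step 3: $\epsilon$-net and discretization.} Since $\dim S = \alpha L$, the sphere $S\cap S^{L-1}$ admits an $\epsilon$-net $\mathcal N$ with $|\mathcal N|\le (3/\epsilon)^{\alpha L}$. Choose $\epsilon:=\min\!\pa{\gamma/(8C),\;\sqrt\gamma/(8K)}$. Then for any unit $\bu\in S$ and $\bv\in\mathcal N$ with $\|\bu-\bv\|\le\epsilon$:
\begin{itemize}
\item[\textup{(a)}] $|\sigma_j^2(\bv)-\sigma_j^2(\bu)|\le (C/M)\cdot 2\epsilon \le \gamma/(4M)$, so $\sigma_j^2(\bu)\ge \gamma/M$ implies $\sigma_j^2(\bv)\ge \gamma/(2M)$;
\item[\textup{(b)}] on the event $\{\|B\|_{\mathrm{op}}\le K\}$, $\|B^*\bu\|\le\sqrt\gamma/4$ implies $\|B^*\bv\|\le \sqrt\gamma/4+K\epsilon \le \sqrt{\gamma/8}$.
\end{itemize}

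\emph{Step 4: union bound.} Combining Steps 2 and 3, the event in \eqref{Bu2} is contained in $\{\|B\|_{\mathrm{op}}>K\}\cup \bigcup_{\bv\in\mathcal N} \mathcal E_{\bv}$, where $\mathcal E_{\bv}:=\{\|B^*\bv\|^2\le\gamma/8\}$ and $\bv$ satisfies the hypothesis of Step 1. Hence
\[
\P(\text{bad}) \;\le\; e^{-c_0 L} + (3/\epsilon)^{\alpha L}\, e^{-c_1 L}.
\]
Choosing $\alpha$ so small that $\alpha\log(3/\epsilon) < c_1/2$ completes the proof with $c:=\min(c_0,c_1/2)$.

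\emph{Main obstacle.} The only delicate point is Step~1: obtaining exponential (in $L$) lower-tail concentration for $\|B^*\bu\|^2$ from only sub-Gaussian tails on the entries. This relies crucially on the uniform upper bound $s_{ij}\le C/M$ from \eqref{eqn:boundC}, which makes each $Z_j^2$ sub-exponential with the correct scale and forces $\sum_j\sigma_j^4$ to be $O(1/M)$, so that Bernstein yields an $e^{-cL}$ bound. Without this upper variance bound one would only get polynomial concentration, which would be swamped by the entropy $(3/\epsilon)^{\alpha L}$ of the net regardless of how small $\alpha$ is taken.
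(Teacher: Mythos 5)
Your proposal is correct and follows essentially the same route as the paper: a single-vector exponential lower-tail bound for $\|B^*\bu\|^2$ (the paper does this by a direct MGF/Taylor computation rather than quoting Bernstein, but these are the same estimate), an operator-norm bound on $B$, and an $\epsilon$-net on the unit sphere of $S$ whose entropy is beaten by taking $\alpha$ small. Your Step 3(a), which explicitly transfers the variance condition $\sum_i s_{ij}|u_i|^2\ge\gamma M^{-1}$ from $\bu$ to the nearby net point at the cost of a factor $2$, is a detail the paper's write-up glosses over, so your version is if anything slightly more careful.
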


\begin{proof} With the replacement $B\to   \sqrt\gamma B$, we only need to  prove the case $\gamma =1$
by adjusting $\delta$ to $\delta/\gamma$. Hence in the following proof we set $\gamma=1$.
 
First, we have an upper bound on the norm of $BB^*$. For any  $T\ge T_0(\beta, \delta,C_\delta)$ (with $\delta,C_\delta$ in \eqref{eqn:subgausB}),  
\be\label{uppernorm}
  \P (\| BB^*\|\ge T ) \le e^{-c_1 TL }
\ee
for some small $c_1=c_1(\beta )>0$. This is a standard large deviation result, e.g.
it was proved in   \cite[Lemma 7.3, part (i)]{ESY1}
(this was stated when the $B_{ij}$'s are i.i.d, but  only independence was used in the proof, the identical 
law was not).

Let $\bb_1, \bb_2, \ldots,\bb_M\in \R^L$  be the columns of $B$, then
$
   \| B^*\bu\|^2 = \sum_{j=1}^M |\bb_j\cdot \bu|^2.
$ 
Since the $\bb_j\cdot \bu$ scalar products are independent, we have for any $g>0$
$$
  \P \left( \| B^*\bu\|^2\le 1/2\right) \le e^{g M /2} \E \left( e^{-g M\| B^* \bu\|^2}\right)
  =\prod_{j=1}^M\left( e^{  g /2} \E \left(e^{- g M  |\bb_j\cdot \bu |^2}\right)\right).
$$ 
Since $e^{{ -}g r} \le 1-g r +\frac{1}{2}g^2r^2$ for all $r>0$, for $\|\bu\|=1$ we have
\begin{equation}\label{tayl}
 \E \left(e^{- g M  |\bb_j\cdot \bu |^2} \right) \leq 1-g M   \E   \left(| \bb_j\cdot \bu |\right)^2 + \frac{g^2M^2}{2} 
\E \left(| \bb_j\cdot \bu |^4\right)
  = 1- Mg\sum_{i} \E \left(|B_{ij}|^2 \right)| u_i|^2 + \OO(g^2).
\end{equation}
If $\bu$ satisfies the last condition in the left hand side of \eqref{Bu2}, i.e. (with $\gamma=1$)
$
\sum_i s_{ij} |u_i|^2 \ge  M^{-1}$ for all 
$1\le j\le M$
then \eqref{tayl} is bounded by 
$ 1- g  + \OO(g^2)
 \le \exp{\big(-g  +\OO(g^2)\big)}
$.
 Choosing  $g$ sufficiently small, we have
\be\label{ug}
   \P ( \| B^*\bu\|^2\le 1/2) \le \Big( e^{-g /2 + \OO(g^2)}\Big)^M \le e^{-c_2M }
\ee
 where $c_2 $ depends only on the constants $\delta,C_\delta$ from \eqref{eqn:subgausB}.
 
Now we take an $\epsilon$ grid in the unit ball of $S$, i.e. vectors $\{ \bu_j\; : \; j\in I\} \subset S$ such that
for any $ \bu \in S$, with $\|\bu  \|\le 1$ we have $\| \bu- \bu_j\|\le \e$ for some $j\in I$. 
It is well-known that   $ |I| \le (c_3\e)^{-\dim S}$ for some constant $c_3$ of order one. 
We now choose  $\e = (4\sqrt{T})^{-1}$ (where $T$ is chosen large enough to satisfy \eqref{uppernorm}).
If there exists a $\bu$ in the unit ball of $S$ with $\| B^* \bu\| \le 1/4$  then 
by choosing $j$ such that $\| \bu-\bu_j\|\le \e$ 
we can bound $\| B^*\bu_j \|\le \| B^* \bu\| + \sqrt{T}  \| \bu-\bu_j\| \le 1/2$, provided that $\| B B^*\|<T$.
Hence together with \eqref{ug}, we have
\begin{eqnarray*}
\P \Big( \exists u\in S\, : \; \| B^*u\| \le \frac{1}{4}, \; \| u\|=1 \Big)
\le & \P (\| BB^*\|\ge T ) +  \sum_{j\in I} 
\P \Big( \| B^*u_j\| 
\le \frac{1}{2} \Big)\\
\le & e^{-c_1 T L} + (c_3\e)^{-\dim S} e^{-c_2M}  \le e^{-cL}, 
\end{eqnarray*}
where the last estimate holds if  
\begin{equation}\label{eqn:ineq}
c\le  \alpha \log(c_3 \e)+c_2\beta.
\end{equation}
After the fixed choice of a sufficiently large constant $T$ we have $\log(c_3\e) <0$, and for small enough $\alpha$ there exists $c>0$ such that (\ref{eqn:ineq}) holds, and consequently \eqref{Bu2} as well.
\end{proof}

\begin{proposition}\label{prop:uncert1} 
Let $D$ be an $L\times L$  deterministic matrix and $B$ 
be a random matrix as in Lemma \ref{basic lem}.
Assume that $D$ satisfies
the following two conditions:
\be\label{apriori}
  \|D\|\le C_D
\ee
for some  large constant $C_D$ (independent of L) and 
 \be\label{apriori2}
 \max_{a,b: |a-b|\le (C_D \log L)^{-1}}  \#\left\{{\rm Spec}(D) \cap  [a,b]\right\} \le \frac{L}{\log L}. 
\ee 
For any fixed $\gamma>0$, there exists $\mu_0(\beta, \gamma,  \delta,C_\delta, C_D)>0$ 
 such that if 
$\mu \le \mu_0$, then for large enough $L$ we have
\be
  \P \left(\exists e \in \bR,\  \exists \bu\in  \R^L\ :\   \|\bu\|=1,\ \| B^*\bu\|\le \sqrt \gamma \mu,\
   \min_{1\leq j\leq M}   \sum_{i=1}^L s_{ij} |u_i|^2 \ge \gamma M^{-1},
 \| (D-e) \bu\|\le  \mu \right)\le e^{-cL}.
\label{triple}
\ee
\end{proposition}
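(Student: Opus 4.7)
The plan is to reduce the proposition to Lemma \ref{basic lem} by observing that any $\bu$ satisfying $\|(D-e)\bu\|\leq\mu$ is essentially concentrated in a low-dimensional spectral subspace of $D$, whose small dimension is precisely what the thinness assumption \eqref{apriori2} provides. Because both the norm $\|B^*\bu\|$ and the local mass $\sum_i s_{ij}|u_i|^2$ are stable under small $\ell^2$ perturbations of $\bu$, one can then transfer the hypotheses from $\bu$ to its projection onto the spectral subspace and invoke Lemma \ref{basic lem} on a subspace of sufficiently low dimension.

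Since $\|D\|\leq C_D$ and $\|\bu\|=1$, the inequality $\|(D-e)\bu\|\leq\mu$ forces $|e|\leq 2C_D$; cover $[-2C_D,2C_D]$ by a grid $e_1,\ldots,e_K$ of spacing $\mu$ and cardinality $K=O(1/\mu)$, and note that replacing $e$ by the nearest grid point $e_k$ only perturbs $\|(D-e)\bu\|$ by at most $\mu$. Fix such an $e_k$, choose a small $\alpha>0$ to be determined, and set $r:=\alpha/(4C_D)$. Covering $[e_k-r,e_k+r]$ by $\lceil\alpha\log L\rceil$ intervals of length $(C_D\log L)^{-1}$, the assumption \eqref{apriori2} gives that the spectral subspace $S_{e_k}$ of $D$ for eigenvalues in this window satisfies $\dim S_{e_k}\leq\alpha L$ for large $L$. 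Writing $\bu=\bu_{\rm in}+\bu_{\rm out}$ orthogonally with $\bu_{\rm in}\in S_{e_k}$, the spectral gap condition gives $r^2\|\bu_{\rm out}\|^2\leq\|(D-e_k)\bu\|^2\leq 4\mu^2$, so $\|\bu_{\rm out}\|\leq 2\mu/r$, which is arbitrarily small once $\mu\ll\alpha/C_D$.

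On the high-probability event $\{\|BB^*\|\leq T\}$ furnished by \eqref{uppernorm}, we verify that the unit vector $\tilde\bu:=\bu_{\rm in}/\|\bu_{\rm in}\|\in S_{e_k}$ satisfies the hypotheses of Lemma \ref{basic lem} with $\gamma$ replaced by $\gamma/4$: combining $\|B^*\bu_{\rm out}\|\leq\sqrt{T}\cdot 2\mu/r$ with the elementary inequality $|(u_{\rm in})_i|^2\geq\tfrac12|u_i|^2-|(u_{\rm out})_i|^2$ and the uniform bound $s_{ij}\leq C_\delta/(\delta M)$ yields both
\[
\|B^*\tilde\bu\|\leq \sqrt{2}\bigl(\sqrt{\gamma}\mu+\sqrt{T}\cdot 2\mu/r\bigr)\leq\sqrt{\gamma/4}/4
\]
and $\sum_i s_{ij}|\tilde u_i|^2\geq(\gamma/4)/M$ for every $j$, provided $\mu_0$ is small enough. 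Lemma \ref{basic lem} applied to $S_{e_k}$ then gives probability $\leq e^{-cL}$; a union bound over the $K=O(1)$ grid points, together with the failure probability of $\|BB^*\|\leq T$, yields \eqref{triple}.

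The main technical obstacle, and what dictates the order in which constants must be fixed, is the tension between the three small parameters $\alpha$, $r$, and $\mu_0$. One must first fix $\alpha<\alpha_0(\beta,\gamma/4,\delta,C_\delta)$ as required by Lemma \ref{basic lem}; this determines $r=\alpha/(4C_D)$; next $T=T(\beta,\delta,C_\delta)$ is fixed so \eqref{uppernorm} is effective; only then may $\mu_0=\mu_0(\beta,\gamma,\delta,C_\delta,C_D)$ be selected small enough that both the spectral truncation $\bu\mapsto\bu_{\rm in}$ loses only a controlled fraction of the local mass $\sum_i s_{ij}|u_i|^2$ and the perturbative bound $\|B^*\tilde\bu\|\lesssim(\sqrt{\gamma}+\sqrt{T}/r)\mu$ stays below $\sqrt{\gamma/4}/4$. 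Ensuring the per-column lower bound $\gamma/M$ survives the projection is the subtlest point, since the orthogonal component $\bu_{\rm out}$, though small in $\ell^2$, could in principle concentrate on the few coordinates selected by a single column of variances.
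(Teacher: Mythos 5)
Your proposal is correct and follows essentially the same route as the paper's proof: project onto the spectral window of $D$ around $e$ (whose dimension is controlled via \eqref{apriori2} by tiling with intervals of length $(C_D\log L)^{-1}$), check that the norm bound on $B^*\bu$ and the per-column mass lower bound survive the projection using $\|BB^*\|\le T$ and $s_{ij}\le C_\delta/(\delta M)$, apply Lemma \ref{basic lem} with the reduced parameter $\gamma/4$, and remove the fixed-$e$ restriction by a $\mu$-grid and union bound. The only difference is cosmetic (you discretize $e$ first, the paper last), and your ordering of the constants $\alpha$, $r$, $T$, $\mu_0$ matches the paper's.
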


\begin{proof}
We will first prove the following weaker statement: for any fixed $e\in \R$ and $\gamma>0$,
if   $\mu \le \mu_0(\beta, \gamma,  C_\delta, C_D)$  is sufficiently small, then for large enough $L$ we have
\be
  \P \left( \exists \bu\; : \; \| \bu \|=1, \;   \| B^*\bu\|\le \sqrt\gamma\mu ,  \;   \min_{j}    \sum_i s_{ij} |u_i|^2 \ge \gamma M^{-1},  \;  \| (D-e) \bu\|\le  \mu \right) \le e^{-cL}.
\label{double}
\ee
As in the proof of Lemma \ref{basic lem}, with the replacement $B\to\sqrt{ \gamma} B$, we only need to prove the case $\gamma =1$.
Fix a small number $\nu$ and consider $P$ to be the spectral projection
$$
   P :=P_\nu:= {\bf 1}( |D-e|\le \nu).
$$
Assume there exists some $\bu$ satisfying the conditions in the left hand side of \eqref{double}. Then we have
$$
\mu^2 \ge   \|(D-e)\bu \|^2  \ge \|(D-e)(1-P)\bu \|^2  \ge \nu^2 \| (1-P)\bu\|^2.
$$
Consequently, denoting  $\bv=P\bu$  and $\b w=(1-P)\bu$, we have
$$
\| \bw\| \le \frac{  \mu }{\nu },\ \quad \| \b v\|^2\ge 1- \frac{ \mu^2}{\nu^2} \ge \frac{1}{2},
$$
provided that  $   \mu^2 \le \nu^2/2$.  Using  the bound $\| B^* \bu\|\le\mu$  in \eqref{double} and $\| \b v\|^2\ge  1/2$,  assuming $\|B^*\|\leq C_1$ (this holds with probability $e^{-c L}$ for large enough $C_1$, by \eqref{uppernorm}), we have
\be\label{appl} 
  \| B^*\b v\|\le \|B^*\bu\| + \| B^*\b w\| \le  \mu + C_1 \| \b w\| \le  2\mu \|\b v\|+ C_2 \frac{\mu }{\nu}\| \b v\|
\ee
with probability $1-\OO(e^{-c L})$. 
Moreover,  by \eqref{eqn:boundC} and the assumption $ \sum_i s_{ij} | u_i|^2 
\ge M^{-1} $ in \eqref{double}, we have 
\be\label{11}
2  \sum_i s_{ij} |v_i|^2 \ge  \sum_i s_{ij} | u_i|^2 -  2 \sum_i s_{ij} | w_i|^2
\ge M^{-1}-  2 \wt C  \| \bw\|^2L^{-1} \ge  (2M)^{-1}
\ee
with $\wt C= C_\delta/(\delta \beta)$ (see \eqref{eqn:boundC})  and 
 provided that $\nu^2\geq 4  \beta \wt C \mu^2$.
Define $\wt \bv=\bv/\|\bv\|$, which is a unit vector in $ {\rm Im}(P)$, the range of $P$.
 So far we proved that 
\begin{multline*}
 \P \left( \exists \bu\; : \; \| \bu \|=1, \;   \| B^*\bu\|\le  \mu ,  \;   \min_{j}    \sum_i s_{ij} |u_i|^2 \ge M^{-1},  \;  \| (D-e) \bu\|\le  \mu \right)\\
\le  \P \left(\exists \wt  \bv\in {\rm Im}(P)\ :\  \|\wt \bv\|=1,\ \| B^* \wt\bv\| \le 2\mu+  C_2\frac{\mu}{\nu}, \quad  \sum_i s_{ij} |\wt v_i|^2\ge (4M)^{-1}
\right)+e^{-cL}.
\end{multline*}
We now set $\mu$ and $\nu$  such that  
$ 2\mu+  C_2\mu/\nu \le1/8$, $ \mu^2 \le \nu^2/2$ and $4\beta \wt C \mu^2\leq \nu^2$. By  Lemma \ref{basic lem}, 
with $S:= {\rm Im}(P)$ and $\gamma=1/4$,  the probability of the above event is
 exponentially small  as
long as $$
{\rm rank }(P)/L\quad  i.e.\quad   \#\left\{ \mbox{Spec}(D) \cap  [e-\nu,e+\nu]\right\}/L $$ is sufficiently small 
(determined by $\beta, \delta, C_\delta$, see the threshold $\alpha_0$ in Lemma \ref{basic lem}).
Together with \eqref{apriori2},
by writing the interval $[e-\nu,e+\nu]$ as a union of intervals of length $(C_D\log L)^{-1}$, 
 by choosing small enough $\nu$, then even smaller $\mu$ 
and finally a large $L$,  we  proved \eqref{double}.

The proof of \eqref{triple} follows by a simple grid argument.
For fixed $\mu>0$, consider a discrete set of energies
$(e_i)_{i=1}^r$ such that (i) $r\leq 2 (C_D+1)/\mu$, (ii)
$|e_j| \le  C_D+1$ for any $1\leq j\leq r$ and (iii) for any $|e|\le    C_D+1$, there is a $1\leq j\leq r$ with
$|e_j-e|\le \mu$. 
If $|e|\leq C_D+1$, we therefore have, for some $1\leq j\leq r$,
$$
\| (D-e_j) \bu\|\le \mu  + |e-e_j| \le  2 \mu.
$$
If $|e|>C_D+1$, then
$
\|(D-e)\bu\|\ge |e|-C_D>1.
$
We therefore proved that, for any $\mu<1$, 
\begin{multline*}
  \P \left( \exists e \in \bR, \;\; \exists \bu\in  \R^L \; : \;  \|\bu\|=1, \;  \| B^*\bu\|\le   \mu, \;
   \min_{j}   \sum_i s_{ij} |u_i|^2 \ge   M^{-1}, \; 
 \| (D-e) \bu\|\le  \mu  \right)\\
\le\sum_{j=1}^r  \P \left(\exists \bu\in \R^L \; : \;  \|\bu\|=1, \;    \| B^*\bu\|\le \mu  ,  \;
   \min_{j}  \sum_i s_{ij} |u_i|^2 \ge M^{-1}, \; \nc
 \| (D-e_j) \bu\|\le 2 \mu  \right).
\end{multline*}
For large enough $L$, the   right hand side is exponentially small by \eqref{double}. 
\end{proof}

\subsection{Strong uncertainty principle. }\label{subsec:induction}

In this subsection, we study the matrix with the following block structure. Let $H =H_0$ be  a $N\times N$ random matrix such that $\{H_{ij}\}_{i\le j}$'s, are independent of each others. Consider the inductive decomposition 
\be\label{fry}
H_{m-1}=\begin{pmatrix}
A_m& B_m^* \cr
B_m & H_{m}
\end{pmatrix},
\ee
where  $A_m$ is a $W\times W$ matrix  and $H_m$ has dimensions $(N-mW)\times (N-mW)$. Remember that in our setting $N=2p W$, so that the decomposition (\ref{fry}) is defined for $1\leq m\leq 2p$ with $H_{2p-1} = A_{2p}$.

\begin{lemma}\label{general-uncertainty}  
In addition to the previous assumptions, assume that the entries of  $B_m$'s, $1\le m\le 2p$, satisfy  \eqref{eqn:subgausB} and  
 \be\label{apriori3.3}
\E |(B_{m})_{ij}|^2\ge \frac{ \wh c}{ W} \quad {\rm for\, all}\ 1\le i, j\le W,
\ee
for some constant $\wh c>0$. For any $K>0$, let $\Omega:=\Omega_K(H)$ be the set of events such that 
\begin{align}
\label{apriori3.1}
&\|A_m\|+\|B_m\|+\|H_m\|\le K,
\end{align}
and
\begin{align}
\label{apriori3.2}
&\max_{a,b: |a-b|\le K^{-1}(\log N)^{-1}}  \#\left\{ \mbox{{\rm Spec}}(H_m) \cap  [a,b]\right\} \le N/(\log N),  
\end{align}
for all $0\leq m\leq 2p$.
Then there exist (small) $\mu_0$ and $c_0$ depending on $(\wh c,K,\delta,C_\delta,p)$, such that for any $0<\mu<\mu_0$ and
$0\le m\le 2p-1$ we have
 \be \label{24}
 \P \Big( \exists e\in \R,\;  \exists \bu\in \mathbb{R}^{ N-mW}:  \| \bu \| = 1,\  \|(H_m-e)\bu\| \le  \mu, 
 \quad \sum_{1\le i\le W} |u_i|^2\le  \mu^2
 \Big )\le e^{- c_0 N}+\P(\Omega^{ \rm c}). 
\ee
\end{lemma}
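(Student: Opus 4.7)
The plan is to establish a stronger inductive statement than \eqref{24}: there exist positive constants $\theta_m,\eta_m$ (depending on $\hat c,K,\delta,C_\delta,p$ but not on $N$ or $\mu$) such that on the event $\Omega$, with probability at least $1-e^{-cN}$, there is no $e\in\R$ and no unit vector $\bu\in\R^{N-mW}$ with $\|(H_m-e)\bu\|\le\eta_m$ and $\sum_{i\le W}|u_i|^2\le\theta_m$. Taking $\mu_0=\min(\sqrt{\theta_0},\eta_0)$ immediately gives \eqref{24}. We argue by induction on $m$, moving downward from $m=2p-1$ (where $H_{2p-1}=A_{2p}$ has size $W\times W$, so any unit vector satisfies $\sum_{i\le W}|u_i|^2=1$ and the statement is vacuous) to $m=0$. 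Since $p$ is fixed, only finitely many inductive steps are performed and the constants remain bounded away from zero.

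For the inductive step, assume the statement at level $m+1$ and suppose for contradiction that a witness $\bu=(\bx,\by)$, with $\bx\in\R^W$ and $\by\in\R^{N-(m+1)W}$, exists at level $m$. On $\Omega$ we have $\|H_m\|\le K$, so we may restrict to $|e|\le K+1$. Using the block decomposition $H_m=\begin{pmatrix}A_{m+1}&B_{m+1}^*\\B_{m+1}&H_{m+1}\end{pmatrix}$ and projecting the approximate eigenvalue equation onto each block, one finds
\begin{align*}
\|(H_{m+1}-e)\by\|&\le \eta_m+K\sqrt{\theta_m},\\
\|B_{m+1}^{\,*}\by\|&\le \eta_m+(2K+1)\sqrt{\theta_m}.
\end{align*}
After normalizing $\tilde\by=\by/\|\by\|$ (with $\|\by\|\ge 1/\sqrt 2$ once $\theta_m\le 1/2$), these bounds remain of order $\eta_m+\sqrt{\theta_m}$. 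Choosing $\eta_m,\theta_m$ small enough that $\sqrt 2(\eta_m+K\sqrt{\theta_m})\le\eta_{m+1}$, the inductive hypothesis applied to $\tilde\by$ and $H_{m+1}$ yields, with probability at least $1-e^{-c_{m+1}N}-\P(\Omega^{\rm c})$,
$$\sum_{i\le W}|\tilde y_i|^2 \ge \theta_{m+1}.$$
Combined with \eqref{apriori3.3} this produces $\min_{j\le W}\sum_i s_{ij}|\tilde y_i|^2\ge \hat c\,\theta_{m+1}/W$.

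We then apply Proposition \ref{prop:uncert1} to $B=B_{m+1}$ and $D=H_{m+1}$ with the crucial choice $\gamma=\hat c\,\theta_{m+1}$, a \emph{fixed} positive constant independent of $\mu$. Hypotheses \eqref{apriori}--\eqref{apriori2} are inherited from $\Omega$ (via \eqref{apriori3.1}--\eqref{apriori3.2}), the sub-Gaussianity \eqref{eqn:subgausB} of $B_{m+1}$ comes from \eqref{eqn:subgaus}, and the aspect ratio $\beta=W/(N-(m+1)W)$ lies in $[1/(2p-1),1]$. Choosing $\eta_m,\theta_m$ smaller still so that both $\eta_m+K\sqrt{\theta_m}$ and $(\eta_m+(2K+1)\sqrt{\theta_m})/\sqrt\gamma$ lie below the threshold $\mu_0(\beta,\gamma,\delta,C_\delta,K)$ of Proposition \ref{prop:uncert1} produces a contradiction with probability at least $1-e^{-cL}\ge 1-e^{-c'N}$, completing the induction.

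The main obstacle is precisely the choice of $\gamma$. The naive attempt to induct directly on \eqref{24} would force $\gamma$ proportional to the bound on $\sum_{i\le W}|\tilde y_i|^2$ coming from the inductive hypothesis, which is of order $\mu^2$ and thus arbitrarily small. Since the threshold $\mu_0(\beta,\gamma,\ldots)$ in Proposition \ref{prop:uncert1} degenerates as $\gamma\to 0$ (the decay constant in Lemma \ref{basic lem} depends on the rescaled sub-Gaussianity $\delta\gamma$), this would fail to close the induction with $N$-independent constants. Replacing the $\mu$-dependent tail $\mu^2$ in \eqref{24} by a fixed positive floor $\theta_m$ at every level, and exploiting that only $2p$ induction steps are performed, resolves this difficulty.
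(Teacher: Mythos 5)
Your proposal is correct and follows essentially the same route as the paper: a backward induction from $m=2p-1$ to $m=0$ on a strengthened statement with level-dependent but $N$- and $\mu$-independent constants, the same block decomposition and projection estimates, and an application of Proposition \ref{prop:uncert1} with $\gamma$ set to $\hat c$ times the fixed level-$(m+1)$ floor (the paper's $\gamma=\hat c\,\mu_{m+1}^2$, your $\gamma=\hat c\,\theta_{m+1}$). Your observation that one must induct on fixed constants rather than on the arbitrary $\mu$ of \eqref{24} is precisely how the paper's recursively defined sequence $\mu_m$ functions, with \eqref{24} for all $\mu<\mu_0$ then following by monotonicity of the event.
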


\begin{proof}  
 We will use an induction from $m=2p-1$ to $m=0$ to prove that  
for each $1\le m\le 2p-1$ there exist two  sequences  of parameters
$\mu_m\in (0,1)$ and   $c_m>0$, \nc depending on $(\wh c,K,\delta,C_\delta)$, such that
\be \label{brown}
  \P\left(\exists e\in \R, \bu \in \R^{N-mW}: \; \|\bu\|=1,\ 
  \|(H_{m}-e)\bu\|\le \mu_m, \  \sum_{1\le i\le W } |u_i|^2\le  \mu_m^2 \right)\le e^{-c_m N}+\P(\Omega^{ \rm c}). 
\ee
This would clearly imply (\ref{24}).
First the case  $m= 2p-1$ is trivial, since we can choose $\mu_{2p-1}=1/2$ and use
$\sum_{1\le i\le W } |u_i|^2=\|\bu\|^2=1$ in this case. 

Now we assume that \eqref{brown} has been proved for some $m+1$, and we need to prove it for $m$.   Assume we are in $\Omega$ and there exists $e$ and $\bu\in \R^{N-mW}$ such that the event in the left hand side of \eqref{brown} holds.  We write $\bu=\begin{pmatrix}
\bv'
\cr
\bv 
\end{pmatrix}$ with  $\bv'\in \R^{W}$, $\|\bv'\|^2=\sum_{1\le i\le W} |u_i|^2$.  
From  $ \|(H_{m}-e)\bu\|\le \mu_m$, we have
$$
\|(A_{m+1}-e)\bv'+B_{m+1}^*\bv\|+\| B_{m+1} \bv'+(H_{m+1}-e)\bv\|\le \sqrt2 \mu_{m}.
$$
Combining \eqref{apriori3.1} with  $ \|(H_{m}-e)\bu\|\le \mu_{m}$, we have   $|e|\le K+\mu_{m}$.  
Inserting it in the above inequality together with $\|\bv'\|\le\mu_m$, and using \eqref{apriori3.1} again, we obtain
$$
\| B_{m+1}^*\bv\|+\|(H_{m+1}-e)\bv\|\le \sqrt2 \mu_{m}+( 4 K +2 \mu_{m})\mu_{m}.
$$
Since $\|\bv\|\ge \sqrt{1-\mu_{m}^2}$, denoting $\wt \bv: =\bv/\|\bv\|$ we have 
$$
\| B_{m+1}^*\wt \bv\|+\|(H_{m+1}-e)\wt \bv\|\le \left(\sqrt2 \mu_{m}+(4K +2\mu_{m})\mu_{m}\right)(1-\mu_{m}^2)^{-1/2}   =:\wt \mu_{m}.
$$
We therefore proved
\begin{align*}
&  \P\left(\exists e\in \R, \exists\bu \in \R^{N-mW}: \; \|\bu\|=1,\ 
  \|(H_{m}-e)\bu\|\le \mu_m, \  \sum_{1\le i\le W } |u_i|^2\le  \mu_m^2 \right)\\
\leq&
  \P\left(\exists e\in \R, \exists\wt\bv \in \R^{N-(m+1)W}: \; \|\wt\bv\|=1,\;
 \| B_{m+1}^*\wt \bv\|+ \|(H_{m+1 }-e)\wt \bv\|\le \wt \mu_{m}  \right)
+
\P(\Omega^{\rm c})\\
\leq&
  \P\left(\exists e\in \R, \exists\wt\bv \in \R^{N-(m+1)W}: \; \|\wt\bv\|=1,\;
 \| B_{m+1}^*\wt \bv\|+ \|(H_{m+1 }-e)\wt \bv\|\le \wt\mu_{m},
\sum_{1\leq i\leq W}|\wt v_i|^2\geq \mu_{m+1}^2\right)\\
&+ e^{-c_{m+1}N}+
\P(\Omega^{\rm c}),
\end{align*}
where in the last inequality we used the induction hypothesis (at rank $m+1$) and we assumed that $\wt\mu_m\leq \mu_{m+1}$, which holds by choosing $\mu_m$ small enough.

With \eqref{apriori3.3}
the last probability is bounded by 
\begin{multline*}
 \P\left(\exists e\in \R, \; \exists\wt\bv \in \R^{N-(m+1)W}: \; \|\wt\bv\|=1,\;
 \| B_{m+1}^*\wt \bv\|+ \|(H_{m+1 }-e)\wt \bv\|\le  \wt \mu_{m}  ,\right. \\
\left.  \min_{1\leq j\leq W} \sum_{i} \E \left|(B_{m+1})_{ij}\right|^2 |\wt  v_i|^2\ge \mu_{m+1}^2 \frac{\wh c}{W} \right) 
\end{multline*}
Applying \eqref{triple} with $\mu=\wt\mu_m$ and $\gamma = \wh c \mu_{m+1}^2$, together 
 with assumption \eqref{apriori3.2}, we know that for small enough $\mu_{m}$ 
(and therefore small enough $\wt \mu_{m}$), the above probability is bounded by $e^{-\wt c N}$ for some $\wt c>0$. Therefore  \eqref{brown} holds at rank $m$  if we define $c_m$ recursively backwards such that $c_m < \min\{ c_{m+1}, \wt c \}$.
 The sequence $\mu_m$  may also be defined recursively backwards
with an initial $\mu_{2p-1}=1/2$ so that each  $\wt \mu_{m}$ remains smaller than $\mu_{m+1}$ and
the small threshold $\mu_0(\beta,\gamma = \wh c \mu_{m+1}^2, \delta, C_\delta, C_D)$ from
 Proposition~\ref{prop:uncert1}.
\end{proof}
   
\begin{corollary}\label{prop:uncert3} 
Under  the assumptions of   Lemma \ref{general-uncertainty}, 
there exist (small) $\wt \mu_0$  and $\wt c$ depending on $(\wh c,K,\delta,C_\delta,p)$, such that for any $0<\mu<\mu_0$
we have\nc
\be\label{jiazy}
  \P \left(\exists e\in \bR\ :\
   B_1^* \frac {\mu^2}{(H_1-e)^2} B_1 \ge   \Big( B_1^*\frac{1}{H_1-e}B_1\Big)^2 
    + 1\right)\le \P(\Omega^{ \rm c})+e^{-\wt c N}.
\ee
  \end{corollary}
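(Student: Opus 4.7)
My plan is to reduce the stated operator inequality to the existence of a near-eigenvector of $H_1$ that is simultaneously approximately annihilated by $B_1^*$, then to contradict Lemma \ref{general-uncertainty} by lifting this vector to $\R^N$ via zero padding in the first $W$ coordinates.

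For the reduction step, I would set $G := (H_1-e)^{-1}$, $P := B_1^* G B_1$, $Q := B_1^* G^2 B_1$, and test the operator inequality $\mu^2 Q \geq P^2 + I_W$ on a unit vector $v \in \R^W$. This reads
\begin{equation*}
\mu^2 \|G B_1 v\|^2 \geq \|B_1^* G B_1 v\|^2 + 1.
\end{equation*}
Setting $f := G B_1 v$ and $\tilde f := f/\|f\|$, this immediately gives $\|B_1^* \tilde f\| \leq \mu$ and $\|f\| \geq 1/\mu$. From $(H_1-e)f = B_1 v$ and $\|B_1\| \leq K$ on $\Omega$, I then get $\|(H_1-e)\tilde f\| = \|B_1 v\|/\|f\| \leq K\mu$. (Note that $f \neq 0$, otherwise $\mu^2 \cdot 0 \geq 0 + 1$ would be a contradiction, so $\tilde f$ is well-defined.)

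For the contradiction step, I would pad $\tilde f$ to $\hat f := (0_W, \tilde f) \in \R^N$, a unit vector whose first $W$ entries vanish. The block form of $H$ gives $(H-e)\hat f = (B_1^* \tilde f, (H_1-e)\tilde f)$, so
\begin{equation*}
\|(H-e)\hat f\|^2 = \|B_1^* \tilde f\|^2 + \|(H_1-e)\tilde f\|^2 \leq (1+K^2)\mu^2.
\end{equation*}
Setting $\tilde \mu_0 := \mu_0 / \sqrt{1+K^2}$ with $\mu_0$ the threshold from Lemma \ref{general-uncertainty}, for any $0 < \mu < \tilde \mu_0$ the vector $\hat f$ realizes the event in \eqref{24} at rank $m = 0$ with parameter $\sqrt{1+K^2}\mu < \mu_0$: indeed $\sum_{1 \leq i \leq W} |\hat f_i|^2 = 0 \leq (\sqrt{1+K^2}\mu)^2$. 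Lemma \ref{general-uncertainty} therefore bounds the probability that such an $\hat f$ exists, and hence the probability of the original operator inequality, by $e^{-c_0 N} + \P(\Omega^c)$, yielding the claim with $\tilde c := c_0$.

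I do not see a serious obstacle in this plan: the reduction step is a short spectral manipulation, and Lemma \ref{general-uncertainty} at $m = 0$ is tailored exactly to the outcome produced by the zero-padding trick (the first $W$ entries of $\hat f$ are identically zero, which is the most extreme violation of the lower bound $\sum_{i \leq W} |\hat f_i|^2 > \mu^2$ predicted by the uncertainty principle). The only point requiring care is ensuring the factor $\sqrt{1+K^2}$ coming from $\|B_1\| \leq K$ does not push the effective parameter beyond the Lemma's threshold $\mu_0$, which is resolved by the choice $\tilde \mu_0 = \mu_0/\sqrt{1+K^2}$.
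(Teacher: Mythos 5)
Your reduction step coincides with the paper's: testing the operator inequality on a unit vector $v\in\R^W$ and normalizing $f=(H_1-e)^{-1}B_1v$ produces a unit vector $\tilde f\in\R^{N-W}$ with $\|B_1^*\tilde f\|\le\mu$ and $\|(H_1-e)\tilde f\|\le K\mu$ on $\Omega$. The finish is genuinely different. The paper stays in $\R^{N-W}$: it invokes Lemma \ref{general-uncertainty} at rank $m=1$ to force $\sum_{1\le i\le W}|\tilde f_i|^2\ge\mu_0^2$, converts this via \eqref{apriori3.3} into the lower bound $\min_j\sum_i s_{ij}|\tilde f_i|^2\ge\gamma M^{-1}$ needed in Proposition \ref{prop:uncert1}, and concludes with \eqref{triple}. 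You instead lift $\tilde f$ to $\hat f=(0_W,\tilde f)\in\R^N$, compute $\|(H-e)\hat f\|^2=\|B_1^*\tilde f\|^2+\|(H_1-e)\tilde f\|^2\le(1+K^2)\mu^2$, note that the first $W$ coordinates of $\hat f$ vanish identically, and apply Lemma \ref{general-uncertainty} directly at rank $m=0$ with effective parameter $\sqrt{1+K^2}\,\mu<\mu_0$. Both arguments are correct and rest on the same machinery; yours is shorter because it reuses the already-established $m=0$ case of the induction, whose proof internally performs exactly the ``$m=1$ plus Proposition \ref{prop:uncert1}'' step that the paper redoes by hand. The only (cosmetic) costs are the rescaling $\wt\mu_0=\mu_0/\sqrt{1+K^2}$ and an extra additive $\P(\Omega^{\rm c})$ in the final bound (you condition on $\Omega$ to use $\|B_1\|\le K$ and then add the bound from \eqref{24}, which itself contains a $\P(\Omega^{\rm c})$); this factor of two is harmless and the paper's own chain of inequalities accumulates it as well.
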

\begin{proof}  By definition, the left hand side of \eqref{jiazy} is
$$
\P \left(  \exists e\in \bR, \; \exists  \bu\in \R^{W}\; : \; \|\bu\|=1, \;  \mu\Big\| \frac{1}{H_1-e} B_1 \bu\Big\| \ge 
  \Big\{\Big\|  B_1^*\frac{1}{H_1-e}B_1 \bu\Big\|^2 +1  \Big\}^{1/2} \right)
$$
Define $ \bv:= (H_1-e)^{-1} B_1 \bu$,  and $\wt \bv:= \bv/\|\bv\| $. As $\|B_1\|\le K$ in $\Omega$,
the above probability is bounded by
\begin{multline*}
\P \left(\exists e\in \bR, \;  \exists \bv\in \R^{N-W} \; : \;  \mu  \|\b v \| \ge 
  \big( \|  B_1^*\bv\|^2 +1\big)^{1/2}, \; \; \| (H_1-e)\bv\| \le K\right)  +\P(\Omega^{ \rm c})\\
  \le   \P \left( \exists e\in \bR, \; \exists \wt\bv\in \R^{N-W}  \; : \;  \|\wt\bv\|=1, \;  
  \|  B_1^*\wt \bv\| \le \mu  , \; \; \| (H_1-e)\wt \bv\| \le K\mu    \right)     +\P(\Omega^{\rm c}).
\end{multline*}
With \eqref{24} (choosing $m=1$), for any  $\mu\le \mu_0$, 
where $\mu_0$ was obtained in Lemma~\ref{general-uncertainty},
  the above expression is bounded  by 
\begin{multline*}
  \P \left(\exists e\in \bR, \; \exists \wt\bv\in \R^{N-W}  \; : \;  \|\wt\bv\|=1, \;  
  \|  B_1^*\wt \bv\| \le \mu  , \; \; \| (H_1-e)\wt \bv\| \le K\mu, \;\; \sum_{1\le i\le W} |\wt \bv_i|^2\ge  \mu_{0} ^2\right)     +e^{-c_0N}+\P(\Omega^{\rm c})\\
\le   \P \Bigg(\exists e\in \bR, \; \exists \wt\bv\in \R^{N-W}  \; : \;  \|\wt\bv\|=1, \;  
  \|  B_1^*\wt \bv\| \le \mu  , \; \; \| (H_1-e)\wt \bv\| \le K\mu, \;\;   \\ 
 \min_{1\leq j\leq W}  \sum_{1\le i\le W} \E |(B_1)_{ij}|^2|\wt \bv_i|^2\ge  \mu_{0} ^2  \frac{\wh c}{W}  \Bigg)
   +e^{-c_0N}  +\P(\Omega^{\rm c}).
\end{multline*}
For the last inequality we used \eqref{apriori3.3}.  From \eqref{triple} with $\gamma = \wh c\mu_0^2$ and
 for small enough $\mu\le \wt\mu_0:=\mu_0(\beta,\gamma = \wh c \mu_0^2, \delta, C_\delta, C_D)$,  
the above term is bounded by $\P(\Omega)+e^{-\wt c N}$ for some $\wt c>0$, which completes the proof of Corollary \ref{prop:uncert3}.
\end{proof}

\begin{proof}[Proof of Proposition \ref{apL}.]  We write $H^{\b g}$  in the from of \eqref{fry}.  Then $H^{\b g}$ satisfies the assumptions \eqref{eqn:subgausB} and \eqref{apriori3.3}. Define $\Omega:=\Omega_K(H)$ as in 
\eqref{apriori3.1} and \eqref{apriori3.2}.  Lemma \ref{general-uncertainty} and Corollary \ref{prop:uncert3}
would thus immediately prove   \eqref{brownBand}--\eqref{brownBand2} if $\b g$ were fixed. To guarantee
the bound simultaneously for any $\b g$,
we only need to prove that there exists a  fixed  (large) $K>0$ such that for any $D>0$ we have 
$$
\P\Big(\bigcup_{\|\b g\|\le N^{-c}}\Omega_K(H^{\b g})\Big)\le N^{-D}
$$ 
if $N$ is large enough.
This is just a crude bound on the norm of band matrices which can be proved by many different methods. For example, 
by perturbation theory, we can remove $\b g$ and thus we only need to prove  
$\P\big(\Omega_K(H^{\b 0} )\big)\le N^{-D}$.
This follows easily from the rigidity of the eigenvalues of the matrix $H$ (see  \cite[Corollary 1.10]{AjaErdKru2015}).
\end{proof}

\section{Universality}

\label{sec:universality}

In this section, we  prove the universality of band matrix $H$ (Theorem \ref{Univ}) assuming  the QUE for the band matrices of type  $H^{\b g}$ (Theorem \ref{QUE-Band}).
In the first subsection, we remind some a priori information on the location of the eigenvalues
of the band matrix. The following subsections give details for the mean-field reduction technique previously presented.

\subsection{Local semicircle law for band matrices. } 
We first recall several known results concerning eigenvalues and Green function estimates for  band matrices. 
For $e\in \R$ and $\omega>0$, we define 
 \begin{align}\label{eqn:omega}
  \b S( e,  N; \omega)&=\hb{z=E+\ii\eta \in\C \col |E-e|\le N^{-\omega}\,,\, N^{-1 + \omega} \leq \eta \leq N^{-\omega}},
 \\   \wh{\b S}( e,  N; \omega)&=\hb{z=E+\ii\eta \in\C \col |E-e|\le N^{-\omega}\,,\, N^{-1 + \omega} \leq \eta \leq 1}.
\end{align}
In this section, we are interested only in $ \wh{\b S}$; the other set $\b S$ will be needed later on. 
We will drop the dependence in $N$ whenever it is obvious.  
We  view $\om$ 
as an arbitrarily small number playing few  active roles and we will put  all  these type of parameters 
after semicolon. In the statement below, we will also need $m(z)$, the Stieltjes transform of the semicircular distribution, i.e. 
\be\label{credit}
m(z)=\int\frac{\varrho_{\rm sc}(s)}{s-z}\rd s=\frac{-z+\sqrt{z^2-4}}{2}, 
\ee
where $\varrho_{\rm sc}$ is the semicircle distribution defined in (\ref{eqn:semicir}) and the square root is chosen so that $m$ is holomorphic in the upper half plane and $m(z)\to 0$ as $z\to\infty$. The following results
on the Green function $G(z)=(H-z)^{-1}$ of the random band matrix
$H$ and its normalized trace $m(z)=m_N(z)= \frac{1}{N}\tr G(z)$
 have been proved in \cite{ErdKnoYauYin2013}.

In the following theorem and more generally in this paper, the notation $A_N\prec B_N$ means that for any (small) $\e>0$ and (large) $D>0$ we have
$\mathbb{P}(|A_N|>N^\e|B_N|)\leq N^{-D}$ for large enough $N\ge N_0(\e, D)$. When $A_N$ and $B_N$ depend on a 
parameter (typically on $z$ in some set $\b S$ or some label) then by $A_N(z)\prec B_N(z)$  uniformly in $z\in \b S$ 
we mean that the threshold $N_0(\e, D)$ may be chosen independently of $z$.

\begin{theorem}[Local semicircle law, Theorem 2.3 in \cite{ErdKnoYauYin2013}] \label{thm: with gap}
For the band matrix ensemble defined by (\ref{eqn:band1}) and (\ref{eqn:band2}), satisfying the tail condition \eqref{eqn:subgaus}, 
uniformly in $z \in \wh{\b S}( e,  W; \omega)$ we have 
\begin{align}
\label{Gijest 2}
\max_{i,j} \absb{G_{ij}(z) - \delta_{ij} m(z)} &\prec\,  \sqrt{\frac{\im m(z)}{W \eta}} + \frac{1}{W \eta},\\
\label{m-mest 2}
\absb{m_N(z) - m(z)} &\prec\; \frac{1}{W \eta}.
\end{align}
\end{theorem}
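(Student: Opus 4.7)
The plan is to follow the standard Schur complement strategy for proving local laws, adapted to the band structure. Let $G^{(i)}(z)$ denote the Green function of the minor of $H$ obtained by removing the $i$-th row and column, and write $s_{ij}=\E(H_{ij}^2)$. For each $i$, Schur's complement formula yields
\begin{equation*}
\frac{1}{G_{ii}(z)} = H_{ii} - z - \sum_{j,k\neq i} H_{ij} G^{(i)}_{jk}(z) H_{ki} = -z - \sum_{j\neq i} s_{ij} G^{(i)}_{jj}(z) - Z_i(z),
\end{equation*}
where $Z_i$ collects the fluctuation of the quadratic form around its conditional expectation. Using $\sum_j s_{ij}=1$ and the fixed-point equation $m^{-1}=-z-m$ satisfied by the semicircular Stieltjes transform, a standard stability analysis for this quadratic equation (away from the spectral edges $\pm 2$) will convert bounds on $Z_i$ and on $G^{(i)}_{jj}-G_{jj}$ into the desired bound $G_{ii}\approx m(z)$.

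The size of $Z_i$ is estimated by a subgaussian large deviation bound (legitimate by the tail hypothesis \eqref{eqn:subgaus}) combined with the Ward identity $\sum_k |G^{(i)}_{jk}|^2 = \eta^{-1}\im G^{(i)}_{jj}$. Because $H_{ij}$ vanishes outside the band and $s_{ij}\asymp W^{-1}$ within it, the effective number of summation indices is of order $W$ rather than $N$, producing
\begin{equation*}
|Z_i(z)| \prec \sqrt{\frac{1}{W^2}\sum_{j,k:\,|i-j|,|i-k|\leq 2W} |G^{(i)}_{jk}(z)|^2} \prec \sqrt{\frac{\im m^{(i)}_N(z)}{W\eta}}.
\end{equation*}
An analogous resolvent expansion handles $G_{ij}$ for $i\neq j$ with the same error bound. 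The full pointwise estimate \eqref{Gijest 2} is then obtained by a bootstrap in $\eta$: at $\eta\sim 1$ a crude bound is available from $\|G(z)\|\leq \eta^{-1}$, and Lipschitz continuity of $G(z)$ in $\eta$ on a fine grid allows one to propagate the estimate down to $\eta\geq N^{-1+\omega}$.

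The main obstacle is the sharper averaged estimate \eqref{m-mest 2}. Naively averaging the pointwise bound only yields $|m_N(z)-m(z)|\prec (\im m/(W\eta))^{1/2}$, which is weaker than $(W\eta)^{-1}$ by a factor of order $(W\eta\cdot\im m)^{1/2}$. To recover this, one must perform \emph{fluctuation averaging}: expand high moments of $N^{-1}\sum_i Z_i$ into polynomials in resolvent entries and exploit the fact that, after taking expectations, most terms either vanish or acquire additional small factors. For mean-field Wigner matrices each averaging gains $N^{-1/2}$; here the gain is $W^{-1/2}$, and the combinatorial bookkeeping has to be performed along the graph of nonzero variances $s_{ij}$, using the sparsity of the band. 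This delicate combinatorial step is the technical heart of the argument; once carried out, substituting the averaged fluctuation bound into the averaged self-consistent equation and inverting by stability delivers \eqref{m-mest 2}.
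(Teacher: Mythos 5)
The paper offers no proof of this statement: it is imported verbatim as Theorem 2.3 of \cite{ErdKnoYauYin2013} (with the rigidity consequence quoted separately from \cite{ErdYauYin2012Rig}). Your sketch follows essentially the same route as that reference --- Schur complement self-consistent equation, large-deviation plus Ward-identity control of the fluctuation $Z_i$ with the effective $W$ summation indices, a continuity/bootstrap argument in $\eta$, and fluctuation averaging to upgrade the averaged bound to $(W\eta)^{-1}$ --- so it is the standard approach, correctly outlined at the level of detail one can expect for a result whose full proof occupies the cited paper.
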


We now recall  the following rigidity estimate of the  eigenvalues for band matrices   \cite{ErdKnoYauYin2013}. 
This estimate was first proved for generalized Wigner matrices in \cite{ErdYauYin2012Rig}  (for our finite band
case this latter result would be sufficient). 
We define the classical location of the $j$-th eigenvalue by the equation 
\be\label{gammadef}
\frac j N = \int_{-\infty}^{\gamma_j} \varrho_{\rm sc} (x) \rd x . 
\ee

\begin{corollary} [Rigidity of eigenvalues, Theorem 2.2 of \cite{ErdYauYin2012Rig} or
Theorem 7.6 in \cite{ErdKnoYauYin2013}] \label{thm:7.1}
Consider the band matrix ensemble defined by (\ref{eqn:band1}) and (\ref{eqn:band2}), satisfying the tail condition \eqref{eqn:subgaus}, 
and $N = 2 p W$ with $p$ finite. Then,  uniformly in $j\in\llbracket 1,N\rrbracket$, we have
\begin{equation}\label{rigidity}
 |\lambda_j-\gamma_j| \prec  \left( \min \big ( \, j ,  N-j+1 \,  \big) \right)^{-1/3}   N^{-2/3}.
\end{equation}
\end{corollary}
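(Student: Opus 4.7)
The plan is to deduce the rigidity estimate \eqref{rigidity} from the local semicircle law of Theorem \ref{thm: with gap} via the standard Helffer--Sj\"ostrand machinery converting Stieltjes transform bounds into control of the empirical counting function
\[
   \mathcal N(E) := \#\{j : \lambda_j \le E\}.
\]
Since $W = \Omega(N)$ in our setting, the error $1/(W\eta)$ in \eqref{m-mest 2} is of the same order as the corresponding Wigner matrix error $1/(N\eta)$, so one should expect the same optimal rigidity as for generalized Wigner matrices: an $O(1)$ bound (up to $N^\epsilon$) on the counting function fluctuations in the bulk, deteriorating by a factor $(\min(j,N-j+1)/N)^{-1/3}$ at the edge, as dictated by the square-root vanishing of $\varrho_{\rm sc}$.

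The first step is, for a fixed $E \in (-2,2)$, to compare $\mathcal N(E)$ with its deterministic counterpart $N\int_{-\infty}^E \varrho_{\rm sc}(x)\,dx$. Approximating $\mathbf 1_{(-\infty,E]}$ by a smooth cutoff $\chi_E$ and applying the Helffer--Sj\"ostrand formula
\[
  \chi_E(\lambda)
  = \frac{1}{\pi}\int_{\C} \frac{\partial_{\bar z}\tilde\chi_E(z)}{\lambda - z}\,dA(z),
\]
where $\tilde\chi_E$ is a quasi-analytic extension supported in a neighborhood of the real axis, one obtains
\[
  \sum_k \chi_E(\lambda_k) - N\int \chi_E(x)\varrho_{\rm sc}(x)\,dx
  = \frac{N}{\pi}\int_{\C} \partial_{\bar z}\tilde\chi_E(z)\,\bigl(m_N(z)-m(z)\bigr)\,dA(z).
\]
Splitting the $z$-integral at the threshold $\eta = N^{-1+\epsilon}$, one applies \eqref{m-mest 2} on the upper region and uses a trivial $|m_N - m| \le C/\eta$ bound together with an integration by parts on the sliver close to the real axis. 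The outcome is $|\mathcal N(E) - N\int_{-\infty}^E \varrho_{\rm sc}| \prec 1$ uniformly in bulk energies, and after covering the spectrum by a grid of such $E$, one inverts this to get $|\lambda_j - \gamma_j| \prec N^{-1}$ in the bulk.

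The main obstacle is the edge regime, where the optimal $N^{-2/3}j^{-1/3}$ scale comes from the square-root vanishing of the density. The diagonal Green's function bound \eqref{Gijest 2} must first be used to rule out any eigenvalue outside a small neighborhood of $[-2,2]$ (so that no contribution to $\mathcal N$ comes from beyond the edge). One then extends the counting-function estimate down to a thin slab of width $N^{-2/3+\epsilon}$ around $\pm 2$, which requires evaluating the Helffer--Sj\"ostrand integral with $\eta$ pushed to the optimal scale $N^{-1+\epsilon}$ at the edge and exploiting the improved $\sqrt{\text{Im}\,m/(W\eta)}$ estimate in \eqref{Gijest 2}. Inverting the resulting $O(1)$ bound on $\mathcal N$ through the relation $N\int_{\gamma_j}^{\lambda_j}\varrho_{\rm sc}(x)\,dx = O(1)$ with $\varrho_{\rm sc}(\gamma_j) \sim (j/N)^{1/3}$ yields the claimed $j^{-1/3}N^{-2/3}$ rigidity. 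The edge analysis, and in particular the uniformity of the Helffer--Sj\"ostrand error as $E$ approaches $\pm 2$, is the only technically delicate point; once it is in place, \eqref{rigidity} follows directly.
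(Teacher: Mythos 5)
The paper does not prove this statement at all: it is quoted verbatim as a known result, with the proof deferred to Theorem 2.2 of \cite{ErdYauYin2012Rig} (or Theorem 7.6 of \cite{ErdKnoYauYin2013}), which is exactly the Helffer--Sj\"ostrand argument you outline --- converting the averaged local law \eqref{m-mest 2} into an $\OO(1)$ bound on the counting function, handling the edge via the improved $\sqrt{\im m/(W\eta)}$ error and a norm bound excluding eigenvalues outside a neighborhood of $[-2,2]$, and inverting through the square-root behaviour of $\varrho_{\rm sc}$. Your outline is correct and coincides with the approach of the cited references; the only caveat is that Theorem \ref{thm: with gap} as stated is local in a window $|E-e|\le N^{-\omega}$ around a fixed $e$, so to run the argument over the whole spectrum (and outside it) you need a union bound over a polynomial grid of base energies $e$, which is harmless.
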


\subsection{Mean field reduction for Gaussian divisible band matrices.\ }\label{sec:MF}
Recall the definition of  $H^g$ from \eqref{meq},
\be
   H^g  
  =  \begin{pmatrix} A  & B^* \cr B & D-g \end{pmatrix},
\label{meq1}
\ee
i.e. $A$ has dimensions $  W \times  W $, $B$ has dimensions $(N- W)\times  W $ and
$D$ has dimensions $(N- W)\times (N-  W)$.  Its eigenvalues and eigenvectors are
denoted by $\lambda^g_j$ and $\b\psi^g_j$, $1\leq j\leq N$.

Almost surely, there is no multiple eigenvalue for any $g$, i.e. the curves
$g\to \lambda^g_j$ do not cross, as shown both by absolute continuity argument (we consider Gaussian-divisible ensembles) and by classical codimension counting argument (see \cite[Theorem 5.3]{Colin}). In particular, the indexing is consistent, i.e. 
if we label them in increasing order for $g$ near $-\infty$,
$
  \lambda_1^{-\infty} < \lambda_2^{-\infty} <\ldots < \lambda_N^{-\infty}$,
then the same order will be kept for any $g$: 
\begin{equation}\label{lambdaorder}
\lambda_1^g <\lambda_2^g < \ldots < \lambda_N^g.
\end{equation}

  Moreover, the eigenfunctions are well defined (modulo a phase and normalization)
and by standard perturbation theory, the functions $g\to \lambda_j^g$ and $g\to \b\psi_j^g$
are analytic functions (very strictly speaking 
in the second case these are analytic functions into homogeneous space of the unit ball of $\C^N$
modulo $U(1)$).
Moreover, by variational principle $g\to \lambda_j^g$ are decreasing. In fact, they are
strictly decreasing (almost surely) and they satisfy
\be\label{duduo}
   -1<\frac{\pt \lambda_k^g}{\pt g} < 0
\ee
since by perturbation theory we have
\be\label{pt1}
   \frac{\pt \lambda_k^g}{\pt g}  = - 1 +\sum_{i=1}^{W} \left|\psi_k^g( i) \right|^2  
\ee
and $0<\sum_{i=1}^{W} \left|\psi_k^g( i) \right|^2 <\|\b\psi_k^g\|_2=1$ almost surely. 
We may also assume (generically), that $A$ and $D$ have simple spectrum, and denote their spectra 
$$
  \sigma(A)=\{ \alpha_1 <\alpha_2 <\ldots <\alpha_{W}\}, \qquad  \sigma(D) = \{ \delta_1<\delta_2< \ldots < \delta_{N-W}\}.
$$
We claim the following behavior of $\lambda_j^g$ for $g\to \pm\infty$ (see Figure \ref{fig:sub21}):
\begin{align}
\label{lambdainfty}
\lambda_j^g &= 
      \begin{cases}
     \alpha_j + \OO\Big( |g|^{-1}\Big),  &\ {\rm for}\ j\le W,\\
     -g + \delta_{j-W}   + \OO\Big( |g|^{-1}\Big),  &\ {\rm for}\  W<j\le N, 
\end{cases}
\ {\rm as}\  g\to -\infty,\\
\label{lambdainfty3}
     \lambda_j^g &=   \begin{cases}-g + \delta_{j}   + \OO\Big( |g|^{-1}\Big), &\ {\rm for}\ 
      j\le N-W,\\
     \alpha_{j-(N-W)} + \OO\Big( |g|^{-1}\Big),  &\ {\rm for}\    N-W<j\le N,
  \end{cases}
  \ {\rm as}\  g\to \infty.
  \end{align}
Notice that the order of labels is consistent with \eqref{lambdaorder}.  
The above formulas are easy to derive by simple analytic
perturbation theory. For example, for $g\to -\infty$ and $j\le W$ we
use
$$
   \Big( A - B^* \frac{1}{D-g-\lambda^g}B\Big)\bw_j^g =\lambda_j^g \bw_j^g.
$$
Let $\bq_j$ be the eigenvector  of $A$ corresponding to $\alpha_j$, $A\bq_j=\alpha_j \bq_j$, then we can
express $\bw_j^g = \bq_j + \Delta\bq_j$ and $\lambda_j^g = \alpha_j +\Delta\alpha_j$, plug it into the formula
above and get that $\Delta\bq_j, \Delta\alpha_j = \OO(|g|^{-1})$.

The formulas \eqref{lambdainfty} and\eqref{lambdainfty3} together with the information that the
eigenvalue lines do not cross and that the functions $g\to \lambda_j^g$ are strictly monotone decreasing, give
the following picture.
The lowest $W$ lines, $g\to \lambda_j^g$, $j\leq W$ start at $g\to-\infty$ almost horizontally
at the levels $\alpha_1, \alpha_2, \ldots, \alpha_{W}$ and go down linearly, shifted with $\delta_1, \ldots, \delta_{W}$
 at $g\to \infty$.
The lines $g\to \lambda_j^g$, $W<j\leq N-W$ start decreasing linearly at  $g\to-\infty$, 
shifted with $\delta_1, \delta_2, \ldots, \delta_{N-W}$ (in this order)
and continue to decrease linearly at $g\to\infty$ but shifted with $\delta_{W+1}, \delta_{W+2}, \ldots \delta_{N}$.
Finally, the top lines, $g\to \lambda_j^g$, $N-W< j\leq  N$, start decreasing linearly at  $g\to-\infty$, 
shifted with $\delta_{N-2W+1}, \ldots, \delta_{N-W}$ and become almost horizontal at levels 
$\alpha_1, \alpha_2, \ldots, \alpha_{W}$ for $g\to\infty$.

\begin{figure}[h]
\centering
\begin{subfigure}{.5\textwidth}
  \centering
\begin{tikzpicture}
\node[anchor=south west,inner sep=0] (y) at (0,0) {\includegraphics[width=6cm]{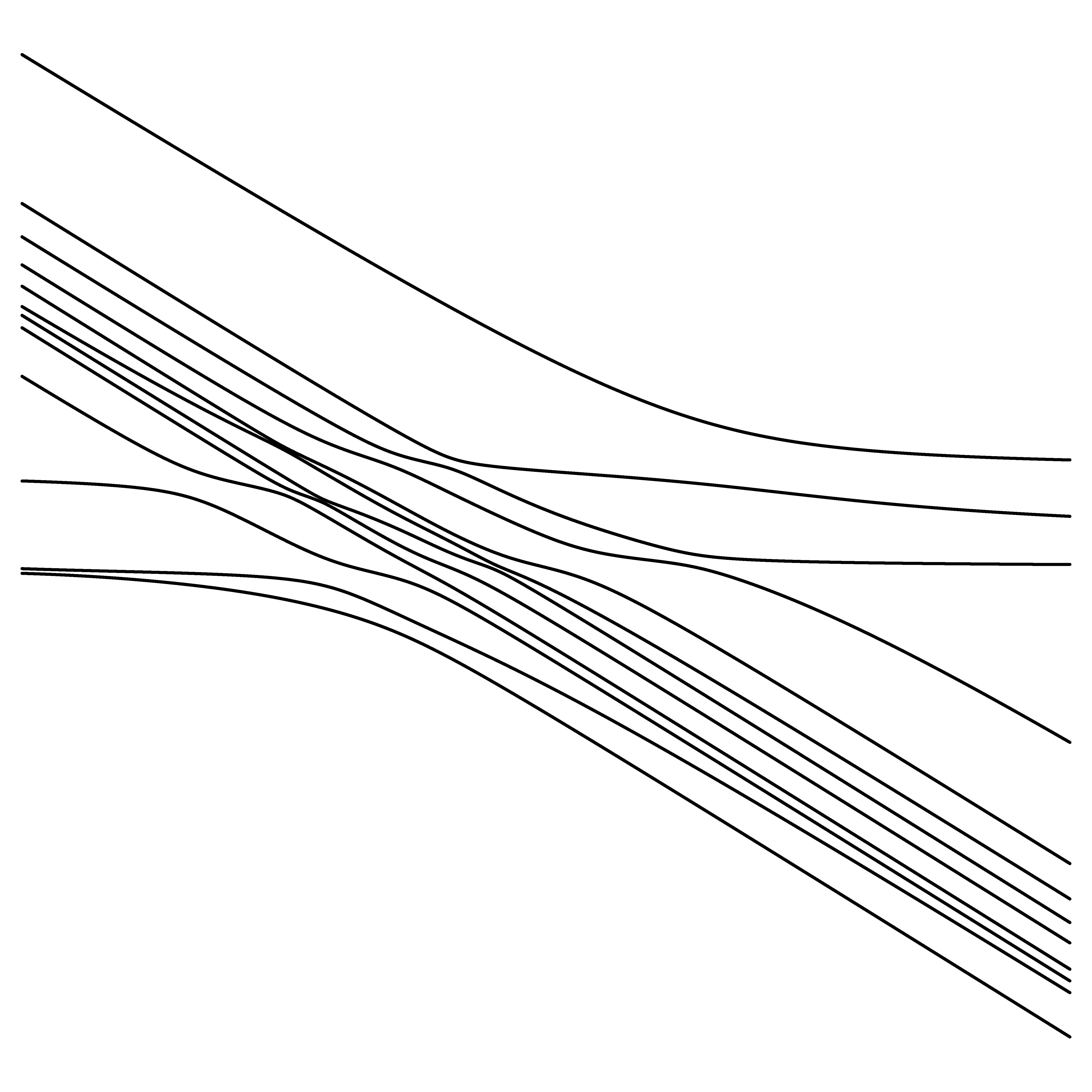}};
\begin{scope}[x={(y.south east)},y={(y.north west)}]
\draw [black,->] (0,0.52) -- (1.02,0.52);
\draw [black,->] (0.47,0.2) -- (0.47,0.8);
\node[black] at (1.07,0.52) {$g$};
\end{scope}
\end{tikzpicture}
  \caption{The maps $g\mapsto \lambda_j^g$, $1\leq j\leq N$.}
  \label{fig:sub21}
\end{subfigure}%
\begin{subfigure}{.5\textwidth}
  \centering
\begin{tikzpicture}
\node[anchor=south west,inner sep=0] (x) at (0,0) {\includegraphics[width=6cm]{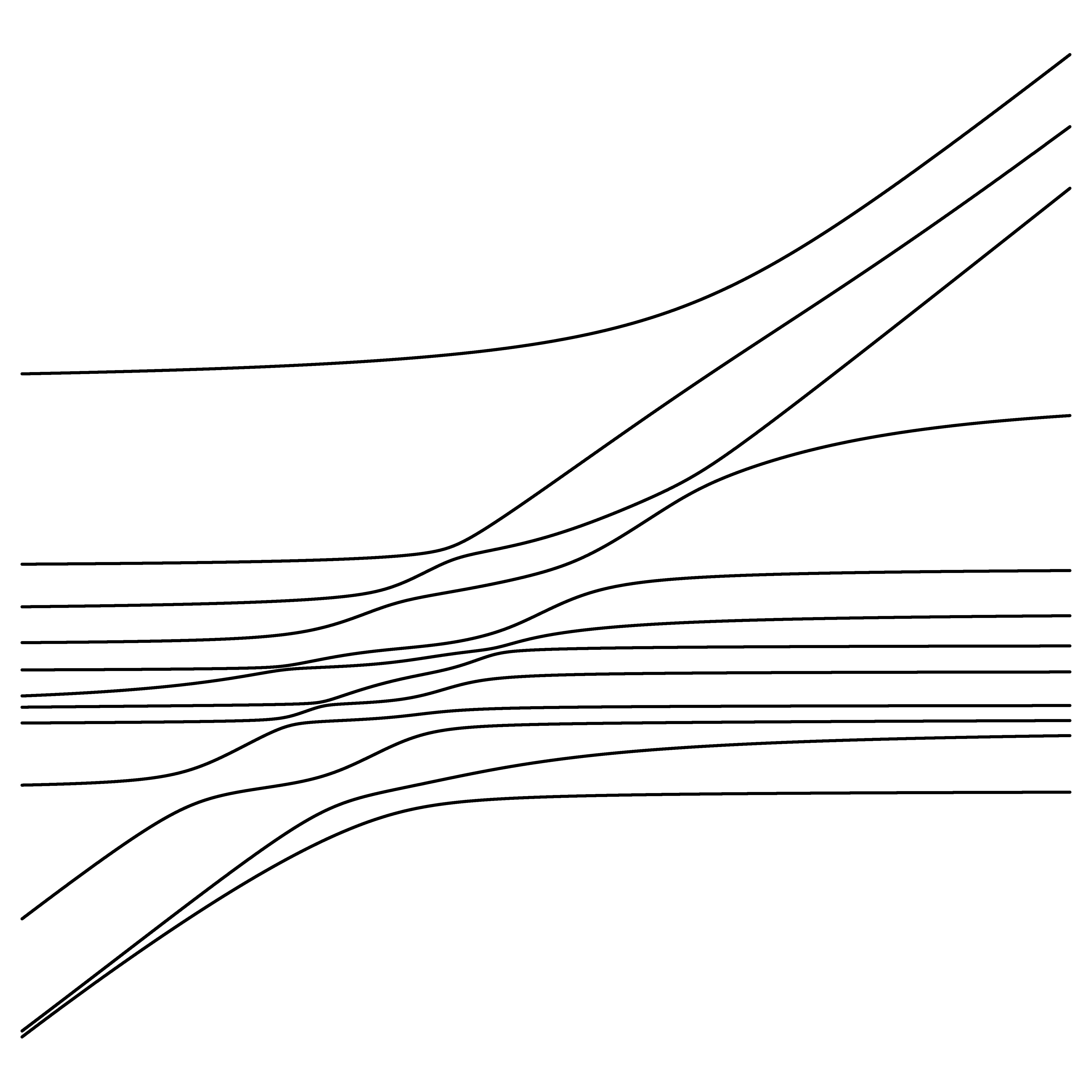}};
\begin{scope}[x={(x.south east)},y={(x.north west)}]
\draw [black,->] (0,0.52) -- (1.02,0.52);
\draw [black,->] (0.47,0.2) -- (0.47,0.8);
\node[black] at (1.07,0.52) {$g$};
\end{scope}
\end{tikzpicture}
  \caption{The maps $g\mapsto \lambda_j^g+g= x_j(g)$, $1\leq j\leq N$.}
  \label{fig:sub11}
\end{subfigure}%
\caption{The eigenvalues of $H^{g}$ (left) and $H^{g}+g\,{\rm Id}$ (right) for $N=12$ and $W=3$.}
\label{fig:test}
\end{figure}

Similarly one can draw the curves $g\to x_j(g):=\lambda_j^g+g$ (see Figure \ref{fig:sub11})
Since $x_j(g)$ is  an  increasing function w.r.t. $g\in \R$ by \eqref{duduo}, with \eqref{lambdainfty}-\eqref{lambdainfty3}, it is easy to check that 
\be\label{ranges}
   \mbox{Ran} \;x_j =\begin{cases}
    (-\infty, \delta_j), &  j\le W,\\
     (\delta_{j-W}, \delta_j), &   W<j\le N-W,\\
     (\delta_{j-W}, \infty), &   N-W<j\le N.
    \end{cases}
\ee
From this  description it is clear that for any $e\not\in\sigma(D)$, 
 the equation $x_j(g)=e$ has exactly $W$ solutions, namely
 \be\label{JW}
   \big\{j: \exists g, \; \mbox{s.t.}\;  x_j(g)=e\big\}=\begin{cases}
   \llbracket  1, W\rrbracket , &\; e<\delta_1,\\
   \llbracket  m+1,m+W\rrbracket , &\;  \delta_m< e< \delta_{m+1},\\
   \llbracket  N-W+1, N\rrbracket , &\;e>\delta_{N-W}.
   \end{cases}
  \ee 
 For any such $j$, the corresponding $g$ is unique by strict monotonicity of $x_j(g)$, thus this function
 can be locally inverted.
Finally, for any $j$, we define the following curves: 
$$
\cal C_j (e)=\lambda_j^g,
\quad s.t. \quad e=x_j(g)=\lambda_j^g+g.
$$
 Their domains are defined as follows:
\be\label{domains}
   \mbox{Dom} \;\cal C_j =\begin{cases}
    (-\infty, \delta_j), &  j\le W,\\
     (\delta_{j-W}, \delta_j), &   W<j\le N-W,\\
     (\delta_{j-W}, \infty), &   N-W<j\le N.
    \end{cases}
\ee
From the definition of $\cal C$ it is clear that these are smooth functions, since they are
compositions of two smooth functions: $g\to \lambda_j^g$ and the inverse of $x_j(g)$.

Finally, by just comparing the definition of $\xi_j(e)$ in  \eqref{Qu}
for $\b g=0$, we know that if $\cal C_k(e)$ exists then it is one of the eigenvalues of $Q_ e$: 
$\cal C_k (e) = \xi_{k'}(e)$ for some $k'$. 
Moreover, we know that almost surely there is

\begin{wrapfigure}[14]{r}{0.5\textwidth} 
\vspace{-0.3cm}
  \centering
\begin{tikzpicture}
\node[anchor=south west,inner sep=0] (x) at (0,0) {\includegraphics[width=7cm]{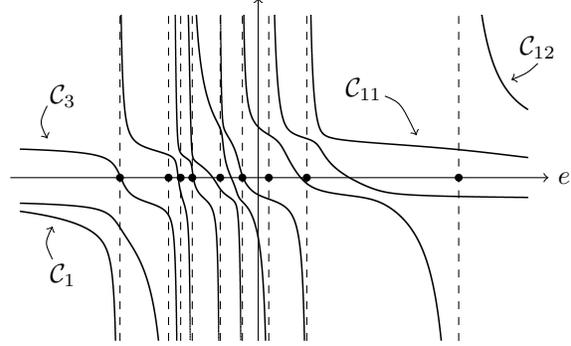}};
\begin{scope}[x={(x.south east)},y={(x.north west)}]
\draw [black,->] (0,0.5) -- (1.02,0.5);
\draw [black,->] (0.47,0) -- (0.47,1.05);
\draw [black,dashed] (0.208,0) -- (0.208,1);
\draw [black,dashed] (0.3,0) -- (0.3,1);
\draw [black,dashed] (0.323,0) -- (0.323,1);
\draw [black,dashed] (0.345,0) -- (0.345,1);
\draw [black,dashed] (0.398,0) -- (0.398,1);
\draw [black,dashed] (0.44,0) -- (0.44,1);
\draw [black,dashed] (0.49,0) -- (0.49,1);
\draw [black,dashed] (0.562,0) -- (0.562,1);
\draw [black,dashed] (0.85,0) -- (0.85,1);
\fill[black] (0.208,0.5)  circle[black,radius=1.5pt];
\fill[black] (0.3,0.5)  circle[black,radius=1.5pt];
\fill[black] (0.323,0.5)  circle[black,radius=1.5pt];
\fill[black] (0.345,0.5)  circle[black,radius=1.5pt];
\fill[black] (0.398,0.5)  circle[black,radius=1.5pt];
\fill[black] (0.44,0.5)  circle[black,radius=1.5pt];
\fill[black] (0.49,0.5)  circle[black,radius=1.5pt];
\fill[black] (0.562,0.5)  circle[black,radius=1.5pt];
\fill[black] (0.85,0.5)  circle[black,radius=1.5pt];
\node[black] at (0.1,0.2) {$ \cal C_1$};
\draw [black,->] (0.08,0.25) .. controls (0.065,0.3) .. (0.08,0.35);
\node[black] at (0.1,0.75) {$ \cal C_3 $};
\draw [black,<-] (0.07,0.62) .. controls (0.055,0.67) .. (0.07,0.72);
\node[black] at (0.67,0.77) {$ \cal C_{11} $};
\draw [black,->] (0.71,0.75) .. controls (0.74,0.73) .. (0.77,0.63);
\node[black] at (1,0.9) {$ \cal C_{12} $};
\draw [black,->] (1,0.85) .. controls (0.98,0.82) .. (0.95,0.8);
\node[black] at (1.05,0.5) {$e$};
\end{scope}
\end{tikzpicture}
\caption{A sample of curves $(\cal C^{\b g}_k)_{1\leq k\leq N}$ for $N=12, W=3$. Bullet points are eigenvalues of $D^{\b g}$.}
\label{Fig13}
\end{wrapfigure}

\noindent no $e$ such that $Q_e$ has multiple eigenvalues (see \cite[Theorem 5.3]{Colin}), so we can assume the curves
$(\cal C_k)_{1\leq k\leq N}$ do not intersect. 
This proves 
\be\label{Yanz}
  \cal C_k (e) = \xi_{k'}(e),    {\rm with}\ k'  = k' (e)= k - \mathcal{N}_D(e)
\ee

\noindent where we defined $\mathcal{N}_D(e)=|\sigma(D)\cap (-\infty,e)|$ the number of eigenvalues of $D$ smaller than $e$. 

The above discussion is summarized as follows, and extended to more general matrices, $A^{\b g}$ and $D^{\b g}$ instead of $A$ and $D$. We stress that the parameter $g$ in the above discussion was an auxiliary variable
and its role independent of the fixed $\b g$ in the definition below.

\begin{definition}[Curves $\cal C^{\b g}_k(e)$]\label{defcurves}  Fix any $\b g\in \R^N$ parameter vector. 
The curves  $\cal C^{\b g}_k (e)$ are the continuous extensions of 
$ \xi_{k-\cal N_{D^{\b g}}(e)}(e)$.  More precisely, we have
\be
  \cal C^{\b g}_k (e) = \xi^{\b g}_{k'}(e), \quad k'=k-\mathcal{N}_{D^{\b g }}(e) 
   \ee
for any $e\not\in\sigma(D^{\b g})$. 
\end{definition}

\noindent The result below shows that the slopes of these curves are uniformly bounded, for ordinates on compact sets.

\begin{lemma}\label{holo}
Consider  any fixed (large)  $K>0$. There exists
a constant  $C_K$ such that for any (small)  $\zeta>0$ and any (large) $D>0$ we have, for large enough $N$,
\be\label{evder}
\P\left(\exists \b g\,: \|\b g\|_\infty\leq N^{-\zeta},\   \sup_{e\not \in \sigma(D), 1\leq k\leq N} \mathds{1}_{| \cal C^{\b g}_k(e)| \le K} 
   \Big| \frac{\rd  \cal C^{\b g}_k}{\rd e} (e) \Big|    \le   C_K \right)\le N^{-D}.
   \ee 
 \end{lemma}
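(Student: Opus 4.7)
The plan is to differentiate $\cal C_k^{\b g}(e)$ by first-order perturbation theory, use the eigenvalue equation for $Q_e^{\b g}$ to rewrite the resulting quadratic form in a controllable way, and then dominate it by the operator inequality~\eqref{brownBand2}. Fix $e \notin \sigma(D^{\b g})$, set $k' = k - \cal N_{D^{\b g}}(e)$, and let $\b u = \b u_{k'}^{\b g}(e)$ be the unit eigenvector of $Q_e^{\b g} = A^{\b g} - B^*(D^{\b g}-e)^{-1}B$ associated with $\xi_{k'}^{\b g}(e) = \cal C_k^{\b g}(e)$. Since $\partial_e(D^{\b g}-e)^{-1} = (D^{\b g}-e)^{-2}$ and $Q_e^{\b g}$ almost surely has simple spectrum, first-order perturbation theory yields
$$\frac{d \cal C_k^{\b g}}{de}(e) = \langle \b u, \partial_e Q_e^{\b g}\, \b u\rangle = -\|(D^{\b g}-e)^{-1} B \b u\|^2 \le 0.$$

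Next, the eigenvalue equation $Q_e^{\b g}\b u = \cal C_k^{\b g}(e)\b u$ rearranges to $B^*(D^{\b g}-e)^{-1} B\,\b u = (A^{\b g} - \cal C_k^{\b g}(e))\b u$, so on the event $|\cal C_k^{\b g}(e)| \le K$,
$$\langle \b u, (B^*(D^{\b g}-e)^{-1}B)^2 \b u\rangle = \|(A^{\b g} - \cal C_k^{\b g}(e))\b u\|^2 \le (\|A^{\b g}\| + K)^2.$$
Pairing this identity with the operator inequality~\eqref{brownBand2} of Proposition~\ref{apL} (which, crucially, already holds uniformly in $e \in \R$ and in $\b g$ with $\|\b g\|_\infty \le N^{-\zeta}$ on an event of probability $\ge 1 - N^{-D}$), and sandwiching with $\b u$, produces
$$\mu^2 \Big|\frac{d\cal C_k^{\b g}}{de}(e)\Big| = \mu^2\langle \b u, B^*(D^{\b g}-e)^{-2} B\, \b u\rangle \le (\|A^{\b g}\| + K)^2 + 1.$$

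To close, I bound $\|A^{\b g}\|$ uniformly in $\b g$. Since $A^{\b g}$ is a $W \times W$ principal submatrix of $H - \mathrm{diag}(\b g)$, $\|A^{\b g}\| \le \|H\| + \|\b g\|_\infty \le 3$ holds with overwhelming probability, the bound on $\|H\|$ coming from the rigidity estimate of Corollary~\ref{thm:7.1} and $\|\b g\|_\infty \le N^{-\zeta}$ from the hypothesis. Combining, $|d\cal C_k^{\b g}/de(e)| \le \mu^{-2}((3+K)^2 + 1) =: C_K$ on an event of probability $\ge 1 - N^{-D}$, as required.

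The main thing to be conceptually sure about is not a technical difficulty but rather recognising that~\eqref{brownBand2} is designed precisely to cancel the dangerous factor $B^*(D^{\b g}-e)^{-1}B$ via the eigenvalue equation of $Q_e^{\b g}$; once this pairing is in place, no separate grid argument in $(e, \b g)$ is required, because Proposition~\ref{apL} already furnishes the desired uniformity.
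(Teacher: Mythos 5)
Your proposal is correct and follows essentially the same route as the paper: the identity $|\rd\cal C_k^{\b g}/\rd e|=\|(D^{\b g}-e)^{-1}B\b u_{k'}^{\b g}(e)\|^2$ from perturbation theory, the bound $\|B^*(D^{\b g}-e)^{-1}B\b u\|\le\|A^{\b g}\|+|\cal C_k^{\b g}(e)|$ from the eigenvalue equation, and the uncertainty-principle inequality \eqref{brownBand2} (which is indeed already uniform in $e$ and $\b g$) to convert the latter into a bound on the former. The only cosmetic difference is that you carry the constant $K$ explicitly where the paper writes $C_\mu(1+|\cal C_k^{\b g}(e)|^2)$.
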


 \begin{proof} We first note that  for $ e \not \in \sigma(D)$, 
\be\label{evd1}
\Big| \frac{ \rd \cal C^{\b g}_k}{\rd e} \Big|= \Big\|  \frac{1}{D^{\b g}-e} B \bu^{\b g}_{k'}(e)  \Big\|^2, \quad k'=k-\mathcal{N}_{D^{\b g }}(e)
\ee
by differentiating \eqref{Qu} w.r.t. $e$ and multiplying it by $\bu^{\b g}_{k'}(e)$. Here we used
  that $k'=k'(e)$ is constant as $e$ varies between two consecutive eigenvalues of $D$.
By  \eqref{Qdef} and \eqref{Qu}, with    $\|\b u^{\b g}_{k'}(e)\|=1$, we have that 
\be\label{bbu}
  \Big\|  B^*\frac{1}{D^{\b g}-e} B \b u^{\b g}_{k'}(e)\Big\| \le \|A^{\b g}\| + |\cal C^{\b g}_{k}(e)|.
\ee
Using Proposition~\ref{apL}  and  that $\|A^{\b g}\|\le C$ holds with high probability,  we have  for any $ e \not \in \sigma(D) $  that
\be\label{evbound}
   \Big\|  \frac{1}{D^{\b g}-e} B \bu^{\b g}_{k'}(e)  \Big\|^2 \le \frac{2}{\mu ^2}  \Big\|  B^*\frac{1}{D^{\b g}-e} B \bu^{\b g}_{k'}(e)\Big\|^2 +  \frac{1}{\mu ^2}
  \le C_\mu(1+|\cal C^{\b g}_{k}(e)|^2)
\ee
for all $\|\b g\|_\infty\leq N^{-\zeta}$, with  high probability,
where in the last step we used \eqref{bbu}.  Together with \eqref{evd1}, we have proved \eqref{evder}.
\end{proof}

The following theorem summarizes the key idea  of the mean-field reduction.

\begin{theorem} \label{trivial-prop}
Let $\theta\in(0,1)$ be fixed. Let $H$ be a Gaussian divisible band matrix  of type 
\be \label{Hgauss0}
H=\sqrt{q}H_{1}+\sqrt{1-q}H_2,  \quad   q=W^{-1+\theta},
\ee
where $H_1, H_2$  are independent  band matrices of width $4W-1$, satisfying
 \eqref{eqn:band1}--\eqref{eqn:band2}, and let $H_1$ have Gaussian entries. 
Recall that $\b \psi_j^g $ is the eigenvector of $H^g$ defined in \eqref{meq}.    Fix an energy $e_0\in(-2,2)$ and let $k$ satisfy  $|\gamma_k-e_0|\le N^{-1 }(\log N)$. 
Suppose that
 all $ \b\psi_j^g $ are flat for $|j-k| \le \log N$ and  $|g|\le  N^{-1+ \zeta }$ for some $\zeta> 0$, in the 
 sense that 
\be\label{jPP0} 
  \sup_{
 \substack{
\text{$j:|j-k|\leq \log N$}\\ 
\hspace{-0.6cm}\text{$|g|\leq N^{-1+ \zeta}$}} 
}  
\ \E \left| \sum_{i=1}^W \left| \psi^g_j (i) \right|^2  - W/N \right| \le   N^{- \zeta  }.
\ee
Then  for any fixed constant $C$ we have (here $\lambda_j=\lambda_j^{g=0}$), for large enough $N$, 
\be\label{holography}
\sup_{j: |j-k| \le C}  \P\left(  \left|\cal C_{j}(e_0)-e_0-\frac{N}{ W}\left(\lambda _j-e_0\right)\right |\ge  N^{-1-\zeta/5}   \right)\le N^{-\zeta/5}.
\ee
\end{theorem}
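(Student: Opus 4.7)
My approach would be to work in the $g$-parameter rather than in $e$: by \eqref{pt1} and \eqref{duduo} the map $g \mapsto x_j(g) := \lambda_j^g + g$ is strictly monotonically increasing with derivative $\sum_{i=1}^{W} |\psi_j^g(i)|^2$, hence admits a unique $g^*$ with $x_j(g^*) = e_0$. From the definition of $\cal C_j$ one has $\cal C_j(e_0) = \lambda_j^{g^*} = e_0 - g^*$, so \eqref{holography} becomes
\begin{equation*}
\Big| g^* - \frac{N}{W}(e_0 - \lambda_j) \Big| \le N^{-1-\zeta/5}.
\end{equation*}

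I would first establish the a priori bound $|g^*| \le N^{-1+\epsilon_0}$ with overwhelming probability, for any fixed small $\epsilon_0 > 0$. Rigidity (Corollary~\ref{thm:7.1}) together with $|\gamma_k - e_0| \le N^{-1}\log N$ and $|j-k| \le C$ give $|e_0 - \lambda_j| \prec N^{-1}$. Lemma~\ref{general-uncertainty} applied to $H_0 = H^g$ (with $m=0$, for the true eigenvector $\psi_j^g$, so the approximation error vanishes) provides a uniform lower bound $x_j'(g) = \sum_{i=1}^W |\psi_j^g(i)|^2 \ge \mu_0^2 > 0$ for $|g| \le N^{-c}$, on an event of overwhelming probability. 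Combining,
\begin{equation*}
|g^*| \;\le\; |e_0 - \lambda_j|/\mu_0^2 \;\prec\; N^{-1}.
\end{equation*}

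Next I would integrate the slope identity: by the fundamental theorem of calculus,
\begin{equation*}
e_0 - \lambda_j - \frac{W}{N} g^* \;=\; \int_0^{g^*} \bigg( \sum_{i=1}^W |\psi_j^g(i)|^2 - \frac{W}{N} \bigg) dg.
\end{equation*}
By Fubini and the flatness hypothesis \eqref{jPP0},
\begin{equation*}
\E \int_{|g| \le N^{-1+\epsilon_0}} \bigg| \sum_{i=1}^W |\psi_j^g(i)|^2 - \frac{W}{N} \bigg| \, dg \;\le\; 2\, N^{-1+\epsilon_0 - \zeta},
\end{equation*}
and Markov at level $N^{-1-\zeta/4}$ shows that this integral is at most $N^{-1-\zeta/4}$ except on an event of probability $\le 2\, N^{\epsilon_0 - 3\zeta/4}$. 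Choosing $\epsilon_0 = \zeta/4$ makes this $\le N^{-\zeta/2} \le N^{-\zeta/5}$. On the intersection of the three good events, $|e_0 - \lambda_j - (W/N) g^*| \le N^{-1-\zeta/4}$; multiplying by $N/W = \OO(1)$ yields the target inequality.

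The main obstacle is upgrading the flatness hypothesis---which gives an expectation bound at each \emph{fixed} $g$---to a bound along the random path $g \in [0, g^*]$. Fubini elegantly sidesteps the alternative (a grid-plus-continuity argument requiring delicate estimates on $\partial_g \psi_j^g$ near small eigenvalue gaps of $H^g$). The uncertainty principle is equally indispensable: without the deterministic-style lower bound on $x_j'$ one cannot a priori restrict $g^*$ to the regime $|g| \le N^{-1+\zeta}$ where \eqref{jPP0} applies.
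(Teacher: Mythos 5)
Your proof is correct and follows essentially the same route as the paper: rigidity to localize $\lambda_j$ near $e_0$, the Section~3 uncertainty principle to lower-bound the slope $\sum_{i\le W}|\psi_j^g(i)|^2$ (and hence confine $g^*$ to the regime where \eqref{jPP0} applies), and then integration of the slope identity combined with Fubini and Markov. The only cosmetic differences are that you parametrize by $g$ throughout whereas the paper integrates in $e$ and changes variables, and you extract the slope lower bound from Lemma~\ref{general-uncertainty} ($m=0$) rather than via Lemma~\ref{holo}.
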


\begin{proof} 
We will prove  \eqref{holography}  only for $j=k$,
  the general case clearly follows a similar argument. 
Denote by $\cal R$ the set of matrices $H$ such that
$$
  |\lambda_k-e_0|\le N^{-1 + \zeta/2}, \quad {\rm and}
 \quad  
 \sup_{   e \not \in \sigma(D)  }  \mathds{1}_{|\cal C _k(e)| \le 3} 
   \Big| \frac{\rd \cal C _k(e)}{\rd e} \Big|    \le   C.
$$
  By the  assumption $|\gamma_k-e_0|\le N^{-1 }\log N$ and the rigidity of $\lambda_k$ (see Corollary \ref{thm:7.1}),  for any $\zeta>0$ 
   the first condition above holds  with high probability. As guaranteed by  Lemma \ref{holo}, 
   the second condition  in the definition of $\cal R$ holds with high probability
   for a large enough $C$. \nc Hence, for such $\zeta$
    and $C$, for any $D>0$ and large enough $N$ we have
 $\P (\cal R) \ge 1 - N^{-D} $ .

In this proof, we will assume that $\lambda_k >  e_0$ for simplicity of notations. 
In  $\cal R$, we have 
\be\label{iccm}
\Big\{\left(e ,  \cal C_k(e) \right)\; : \;   e \subset [e_0, \lambda_k]  \nc \Big\}\in
\Big[e_0-N^{-1+\zeta/2}, e_0     + \nc N^{-1+\zeta/2}\Big]^2, \qquad \sup_{   e \in [e_0, \lambda_k] \setminus  \sigma(D)  }  
   \Big| \frac{\rd \cal C _k(e)}{\rd e} \Big|    \le   C. 
\ee
Recall that the function  $\cal C_k$  satisfies the relation 
\be\label{xGS}
\cal C_k\left(  g+\lambda^g_k   \right)=\lambda_k^g. 
\ee
Differentiating \eqref{xGS}  at the point $e= g+\lambda^g_k$,  and using  \eqref{pt1},  we have 
\be\label{h1}
\frac {\rd \cal C_k (e)}  { \rd e}  
=\frac{\partial_g\lambda^g_k}{1+\partial_g\lambda^g_k} 
=\frac{ \sum_{i=1}^W \left|\psi^g_k (i) \right|^2-1}{\sum_{i=1}^W \left|\psi^g_k (i) \right|^2 }. 
\ee
  Hence there is a constant $c$ such that in $\cal R$ we have  
\be\label{lb}
\inf_{   e \in [e_0, \lambda_k] \setminus  \sigma(D)  }   \sum_{i=1}^W \left|\psi^g_k (i) \right|^2 \ge c .
\ee
\nc
Since  $\cal C_k(\lambda_k)=\lambda_k$ at $g=0$, we have
 $$
  \cal C_k (e_0)-e_0
  =\int_{\lambda_k}^{e_0} 
\left(\frac {\rd \cal C_k (e)}  { \rd e}-1\right) \rd e
=\int_{e_0}^{\lambda_k} 
\left(\sum_{i=1}^W \left| \psi^g_k (i) \right|^2\right)^{-1} \rd e, 
$$ 
where $e$ and $g$ are related by $g+\lambda^g_k=e$.  Using the above equation,  a simple  calculation  
gives (remember $N=2pW$ and $c$ is defined in (\ref{lb}))
\begin{equation}
\label{36}
 \E \mathds{1}_{\cal R} \left|\cal C_{k}(e_0)-e_0-\frac{N}{ W}\left(\lambda _k-e_0\right)\right |
\le  \frac{2p}{c}  \E  \mathds{1}_{\cal R} \nc  \int_{e_0}^{\lambda_k}    \left|\sum_{i=1}^W \left| \psi^g_k (i)\right|^2-W/N\right| \rd e.
\end{equation}
 The integration domain is  over  $ e=g+\lambda^g_k\in [e_0, \lambda_k]$  with $g=0$ when $e=\lambda_k$, and $g=g_0$ when $e=e_0$ with the $g_0$ that satisfies $e_0=g_0+\lambda^{g_0}_k$. 
  Notice that in the set $\cal R$ we have
$$
\frac {\rd g }  { \rd e} =  \left(\frac {\rd \lambda^g_k }  { \rd g}+ 1 \right)^{-1} =  \left(\sum_{i=1}^W \left| \psi^g_k (i) \right|^2 \right)^{-1} \in[1,c^{-1}],
$$
which implies 
$
|g_0|\le c^{-1} |\lambda_k-e_0 | \le c^{-1}N^{-1 + \zeta/2}$,
i.e., $g$ is in the domain required for using \eqref{jPP0}.  Therefore, we can insert  the estimate \eqref{jPP0} 
into \eqref{36} and conclude that 
$$
 \E \mathds{1}_{\cal R}\left|\cal C_{k}(e_0)-e_0-\frac{N}{ W}\left(\lambda _k-e_0\right)\right |\le  \frac{2p}{c}N^{-1-\zeta/2}.
$$
\nc
This  implies \eqref{holography} and completes the proof of the theorem. 
\end{proof}

\subsection{Proof of Theorem \ref{Univ}. }
  We will first prove  Theorem \ref{Univ} for the class of Gaussian divisible band matrix ensemble, which was defined in \eqref{Hgauss0}.  We will prove general case at the end of this section.
Recall $ Q_e=A- B^*(D-e)^{-1}B$ where, for the Gaussian divisible band matrix ensemble, we 
can decompose $A$ as 
\be\label{ycei}
A=\sqrt{q}A_{1}+\sqrt{1-q}A_2, 
\ee
where $A_1$ and $A_2$ are independent and $A_1$ is a standard $W\times W$ GOE matrix.  For a smooth 
 test function $O$
of $n$ variables with compact support,  define the following observable  of  the rescaled  eigenvalue gaps of $H$:
\be\label{O}
   O_{k,n}(\b \lambda, N):= 
 O \left(  N \rho_{\rm sc} (\lambda_k ) ( \lambda_{k+1}  - \lambda_{k}) ,\dots, N  \rho_{\rm sc} ( \lambda_k ) 
( \lambda_{k+n}  - \lambda_{k+n-1} ) \right)  .
\ee
Our goal is to prove that for some $c>0$,  for any $k\in\llbracket \kappa N,(1-\kappa)N\rrbracket$ we have
$$
(\E^H- \E^{{\rm GOE}_N}) O_{k,n}(\b \lambda, N) \le N^{-c}.
$$
Given $k$,  let the (nonrandom)  energy  $e_0\in(-2,2)$ be such that  $|e_0 -\gamma_k|\le (\log N) N^{-1}$.
 We claim that we can choose $e_0$ with $ |e_0-\gamma_k|\le (\log N)N^{-1} $  such that 
\be\label{DL}
\P(\|(D-e_0)^{-1}\|\ge N^{4})\le N^{-1}.
\ee
To prove this, we note that $\|(D-e_0)^{-1}\|\ge N^4$ is equivalent 
to  $  \min_\ell |\delta_\ell-   e_0|\le N^{-4}$, where, remember, that  the spectrum 
of $D$ is denoted  $\delta_1<\delta_2< \ldots < \delta_{N-W}$.
  For any $\sigma(D)$ fixed, we have the trivial bound 
$$
 \int_{\gamma_k-(\log N)N^{-1}}^{\gamma_k+(\log N)N^{-1}} \mathds{1}_{\min_\ell |\delta_\ell \nc -e_0 \nc |\le N^{-4}}\rd e_0 \le N^{-5/2}.
$$ 
Taking expectation of the last inequality w.r.t the probability law of $D$ and using the Markov inequality, 
we have proved \eqref{DL}. 
 We remark that, by  smoothness of $O$ and by rigidity (Corollary \ref{thm:7.1}), $\rho_{\rm sc} (\lambda_k )$ can be replaced by $\rho_{\rm sc} (e_0)$  in (\ref{O}).

Denote  $\E^{Q_e}$ the expectation w.r.t the law of ${Q_e}$ induced from the distribution of
 the original band matrix  $H$ and let $\b {\xi}(e)=\left({\xi}_1(e), {\xi}_2(e), \ldots, {\xi}_W(e)\right)$ be
  the ordered spectrum of $Q_{e}$.
From the approximate affine transformation between the $\lambda$ and $\xi$ eigenvalues, guaranteed by
Theorem \ref{trivial-prop},  we have  
$$
\E^H  O_{k,n}(\b \lambda, N) =  \E^{Q_{e_0}} O_{k-\al ,n}(\b {\xi}(e_0), W)+ \OO(N^{-c}) 
, \quad \al:= \cal N_D(e_0),
$$
where we used Definition \ref{defcurves}, and the definition 
$$
O_{k ,n}(\b {\xi}(e_0), W):= O \left(  W \rho_\xi (\xi_k ) ( \xi_{k+1}  - \xi_{k}) ,\dots,  W \rho_\xi  ( \xi_k ) 
( \xi_{k+n}  - \xi_{k+n-1} ) \right), \qquad \xi_i = \xi_i(e_0).
$$
Here $\rho_\xi$ denotes the limiting density of the eigenvalues $Q_e$.
We also used  $\rho_{\xi}(e_0)=\rho_{\rm sc}(e_0)$, and that $\rho_\xi$ is smooth
so $\rho_\xi(\xi_k)$ is very close to $\rho_{\xi}(e_0)$ by rigidity, both are
 easy consequences \nc of the local law for $Q_{e_0}$, Theorem \ref{local law}. 
We therefore now need to prove
\begin{equation}\label{37}
\E^{Q_{e_0}} O_{k-\al ,n}(\b {\xi}(e_0), W)-\E^{{\rm GOE}_N} O_{k,n}(\b \lambda, N) =\OO( N^{-c}).
\end{equation}

We now compute the left side of  \eqref{37} by first 
conditioning  on the law of $A_2, B, D$. Theorem \ref{Univ} for Gaussian divisible matrices thus follows from \eqref{DL} and   the following   lemma (proved in the next subsection), 
which asserts the local spectral statistics of the matrix $Q_{e_0}$ are universal.

\begin{lemma}\label{popcorn} Under the assumptions of Theorem \ref{Univ} and  \eqref{ycei}, there exists   $c>0$ such that
$$
\P\left( {\mathds{1}}_{\|(D-e_0)^{-1}\|\le N^4} \left|\E^{A_1} \left(O_{k-\al ,n}(\b {\xi}(e_0), W)\Big| A_2, B, D\right)-
\E^{{\rm GOE}_N} O_{k,n}(\b \lambda, N)
\right|\ge N^{-c}\right)\le N^{-c}.
$$
\end{lemma}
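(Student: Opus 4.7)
The plan is to view Lemma \ref{popcorn} as a universality statement for a Gaussian divisible deformation and apply the existing DBM-based machinery. Conditioning on the $\sigma$-algebra generated by $(A_2, B, D)$, the matrix
$$
Q_{e_0} = A - B^*(D - e_0)^{-1} B = M_0 + \sqrt{q}\, A_1, \qquad M_0 := \sqrt{1-q}\, A_2 - B^* (D-e_0)^{-1} B,
$$
becomes the output of a matrix Dyson Brownian motion started from the fixed (conditionally deterministic) symmetric matrix $M_0$ and run for time $q = W^{-1+\theta}$, with $A_1$ a standard $W\times W$ GOE independent of $(A_2,B,D)$. Since $q \gg W^{-1}$, the Gaussian component is large enough for the bulk gap universality results for deformed ensembles of the form ``deterministic $+$ small GOE'' from \cites{LeeSchSteYau2015, ES, LY} to apply. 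Once such universality is established at scale $(W \rho_\xi(e_0))^{-1}$, the conclusion follows because the scaled bulk gap statistics of ${\rm GOE}_N$ are given by the same universal (Gaudin) distribution, independently of the matrix dimension.

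The only nontrivial input required by the gap universality machinery is a local law for $Q_{e_0}$ down to scale $W^{-1+\omega}$ in a neighborhood of $e_0$ which is uniform on a high-probability subset of $(A_2, B, D)$-configurations. This is precisely Theorem \ref{local law}, proved in Section \ref{sec:local} via the Schur complement formula starting from the band-matrix local law of \cite{ErdKnoYauYin2013}; the hypothesis $\|(D-e_0)^{-1}\| \leq N^4$ ensures the Schur expansion is well-controlled, and the uncertainty principle of Section \ref{sec:uncertainty} rules out spurious near-eigenvectors. From the local law one extracts rigidity of the eigenvalues $\xi_j(e_0)$ with respect to the classical locations induced by the (conditionally deterministic) limiting density $\rho_\xi$. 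Because $e_0 \in (-2,2)$ is in the bulk and $\alpha = \mathcal N_D(e_0)$ sits at roughly the expected semicircle fraction, the index $k - \alpha$ corresponds to an eigenvalue of $Q_{e_0}$ in the bulk, where the universality results are applicable.

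The remaining step is to feed the rigidity estimate into the coupling of Dyson Brownian motions developed in \cite{BouErdYauYin2015} together with the parabolic H\"older regularity of \cite{ErdYau2012}, exactly as in \cites{LeeSchSteYau2015, ES, LY}, which gives convergence of the scaled gaps around $\xi_{k-\alpha}(e_0)$ to the Gaudin distribution conditionally on $(A_2, B, D)$. The expected obstacle is that the gap universality theorems in \cites{LeeSchSteYau2015, ES, LY} are typically formulated for ensembles whose deterministic part satisfies a quantitative local law \emph{deterministically}; here $M_0$ is random, so one must check that its local law holds with overwhelming probability \emph{uniformly} on an event of $(A_2,B,D)$ whose complement has probability $\le N^{-c}$, and that the resulting density $\rho_\xi$ enjoys enough smoothness near $e_0$ to apply the parabolic regularity step. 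This verification, which reduces to combining Theorem \ref{local law} with the bound \eqref{DL} and Proposition \ref{apL}, is the main quantitative work behind the statement.
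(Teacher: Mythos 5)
Your proposal is correct and follows essentially the same route as the paper: condition on $(A_2,B,D)$, write $Q_{e_0}=V+\sqrt{q}\,A_1$ with $q=W^{-1+\theta}$, verify the $(\eta_*,r)$-regularity of $V$ via the local law of Theorem \ref{local law} and the norm bound supplied by the indicator $\|(D-e_0)^{-1}\|\le N^4$, and invoke the gap universality theorem for mean-field perturbations from \cite{LY}. The only point where the paper is more careful than your sketch is in locating the bulk index: rather than arguing that $\al=\cal N_D(e_0)$ sits at the expected fraction, it uses the curve identity $\xi_{k-\al}(e_0)=\cal C_k(e_0)$ together with $\cal C_k(\lambda_k)=\lambda_k$, rigidity, and the slope bound of Lemma \ref{holo} to place $V_{k-\al}$ within $r/3$ of $e_0$, and it also records the bookkeeping needed to replace $\rho_T(\xi_{k-\al})$ by $\rho_{\rm sc}(e_0)$ in the observable.
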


Theorem \ref{Univ} for our band matrices with general entries follows from Lemma \ref{popcorn} and the following comparison result.
 Let $H_t=(H_{ij}(t))$ be a  time dependent flow of  symmetric $N\times N$ matrices
with $H_0=H$ our original band matrix. The 
dynamics of the matrix entries are given by the 
stochastic differential equations
\be\label{eqn:generalDBM}
  \rd H_{ij} (t)  = \frac {\rd \cal B_{ij}  (t) } { \sqrt N}  - \frac{1}{2 N s_{ij}} h_{ij} (t) \rd t,   \quad |i-j| \le 2W,
\ee
where  $\cal B$ is  a symmetric matrix  with 
 $(\cal B_{ij})_{i\leq j}$   a family of independent Brownian motions. By definition,  $H_{ij} (t)  = 0$
for $|i-j| >  2W$. 
The parameter   $s_{ij} > 0$ can take any positive values, but  we choose $s_{ij}$ to be the variance of  $H_{ij}(0)$, i.e., 
$s_{ij} = 1/(4W-1)$. 
Clearly,  for any $t\geq 0$ we have $\E(H_{ij}(t)^2)=s_{ij}$ for all $i, j$ and  thus the variance of the matrix element is preserved
in this flow. This flow is similar to the Dyson Brownian motion but adapted to the band structure. 
For this flow, the following continuity estimate  holds.
 
\begin{lemma}\label{lem:continuity}
Let $\kappa>0$ be arbitrarily small, $\delta\in(0,1/2)$ and $t=N^{-1+\delta}$. Suppose that $W= c N$ for some constant $c$ independent of $N$. 
Denote by  $H_t$ the solution of (\ref{eqn:generalDBM}) 
with initial condition a symmetric band  matrix $H_0$  as defined in (\ref{eqn:band1}), (\ref{eqn:band2}). 
Let $m$ be 
any positive integer and $\Theta:\RR^{m+m^2}\to\RR$ be a smooth function with derivatives satisfying
\begin{equation}\label{eqn:Theta}
\sup_{k\in\llbracket 0,5\rrbracket, x\in\RR^{m+m^2}}|\Theta^{(k)}(x)|(1+|x|)^{-C}<\infty
\end{equation}
for some $C>0$. 
Denote by $(\bu_1(t),\ldots,\bu_N(t))$ the eigenvectors of $H_t$ associated with the eigenvalues $\la_1(t)\leq \dots\leq \la_N(t)$, and $(u_k(t,\alpha))_{1\leq \al\leq N}$ the coordinates of $\bu_k(t)$.
Then there exists $\e>0$ (depending only on $\Theta,\delta$ and $\kappa$) such that, for large enough $N$,
$$
\sup_{I\subset\llbracket \kappa N,(1-\kappa) N\rrbracket, |I|=m = |J|}    \left|
(\E^{H_t}-\E^{H_0})\Theta\left(
(N(\la_k-\gamma_k),N   u_k(\cdot,\alpha)^2 )_{k\in I, \alpha \in J }
\right)
\right|\leq N^{-\e}.
$$ 
 \end{lemma}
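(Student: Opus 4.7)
The statement is a Green function comparison theorem (GFT) adapted to the band-structured Ornstein--Uhlenbeck flow \eqref{eqn:generalDBM}, which preserves the variance profile $\{s_{ij}\}$. The plan is to express $\Theta$ as a smooth functional of the resolvent $G(t,z)=(H_t-z)^{-1}$ at spectral scales slightly finer than $N^{-1}$, then to track $\partial_t\,\E F(G(t,\cdot))$ via It\^o/cumulant expansion along the flow.

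\textbf{Step 1: Reduction to resolvent functionals.} Fix $\omega>0$ small and set $\eta_0=N^{-1-\omega}$. Using Helffer--Sj\"ostrand together with rigidity (Corollary \ref{thm:7.1}) and the local law (Theorem \ref{thm: with gap}) for band matrices with $W=cN$, I will approximate:
\begin{align*}
N(\lambda_k(t)-\gamma_k)&\approx \pi\,(\mathcal N^{\eta_0}(\gamma_k)-k),\qquad \mathcal N^{\eta_0}(E):=\int_{-\infty}^{E} \im m_N(x+\ii\eta_0)\,\rd x,\\
N u_k(t,\alpha)^2 &\approx \frac{\eta_0}{\pi}\im G_{\alpha\alpha}(\lambda_k(t)+\ii\eta_0),
\end{align*}
with negligible error in the sense of \eqref{eqn:Theta}. (The second identity uses that for $W=cN$ there is a single eigenvalue in each window of size $\eta_0$ with overwhelming probability, so the $\im G_{\alpha\alpha}$ peak is dominated by the single term $\eta_0|u_k(\alpha)|^2/((\la_k-E)^2+\eta_0^2)$.) Composing with the smooth $\Theta$, the quantity to control is reduced to $\E F(G(t,\b z))$, where $F$ is a smooth bounded functional with polynomial-in-$N$ derivatives and $\b z$ is a finite collection of spectral parameters with $\im z\asymp\eta_0$.

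\textbf{Step 2: It\^o expansion along the flow.} Writing $\partial_{ij}$ for the derivative with respect to $H_{ij}$ and using $\partial_{ij} G=-G E^{(ij)} G$, It\^o's formula applied to \eqref{eqn:generalDBM} gives
\[
\frac{\rd}{\rd t}\E F(G(t,\b z))=\sum_{i\le j}\left[\frac{1}{2N}\E \partial_{ij}^2 F-\frac{1}{2Ns_{ij}}\E(H_{ij}\partial_{ij}F)\right].
\]
Applying Stein's lemma in cumulant form to $\E(H_{ij}\partial_{ij}F)$, the second-cumulant term cancels the Laplacian term precisely because the flow is the \emph{variance-preserving} OU (this is the central algebraic point). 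What remains is
\[
\frac{\rd}{\rd t}\E F(G(t,\b z))=-\sum_{i\le j}\sum_{r\geq 3}\frac{\kappa_r^{(ij)}(t)}{2Ns_{ij}\,(r-1)!}\E \partial_{ij}^r F+\text{tail},
\]
where $\kappa_r^{(ij)}(t)$ is the $r$-th cumulant of $H_{ij}(t)$ and the tail is controlled by the sub-Gaussian bound \eqref{eqn:subgaus}.

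\textbf{Step 3: Bounding the cumulant contributions.} The sub-Gaussian assumption gives $|\kappa_r^{(ij)}(t)|\le C_r W^{-r/2}$ uniformly in $t$. By the local law of Theorem \ref{thm: with gap}, $|G_{ab}(z)|\prec 1$ and each derivative $\partial_{ij}G_{ab}$ produces two Green function factors, so each $\partial_{ij}^r F$ is $\prec N^{C\omega}$ times a product of bounded $G$-entries. The band constraint restricts the sum $\sum_{i\le j}$ to $\OO(NW)=\OO(N^2)$ nonzero terms, giving, for the leading $r=3$ term,
\[
\Bigl|\sum_{i\le j}\frac{\kappa_3^{(ij)}(t)}{Ns_{ij}}\E\partial_{ij}^3 F\Bigr|\prec N^2\cdot\frac{W^{-3/2}}{Ns_{ij}}\cdot N^{C\omega}\prec N^{1/2+C\omega},
\]
using $Ns_{ij}=\OO(1)$. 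Integrated over $t\in[0,N^{-1+\delta}]$ this yields an error $\prec N^{-1/2+\delta+C\omega}$, which is $\OO(N^{-\e})$ for $\delta<1/2$ and $\omega$ small. Higher cumulants are smaller by additional factors of $W^{-1/2}=\OO(N^{-1/2})$ and are handled identically. Because $F$ contains derivatives up to order five (inherited from \eqref{eqn:Theta} after Helffer--Sj\"ostrand), the cumulant expansion can be safely truncated at an order depending only on $C$ in \eqref{eqn:Theta}.

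\textbf{Main obstacle.} The delicate point is controlling $\partial_{ij}^r F$ uniformly when $\im z$ reaches the optimal scale $\eta_0=N^{-1-\omega}$: naively each $\partial_{ij}G$ costs a factor $\eta^{-1}$, which would destroy the estimate. The resolution is to avoid chain-rule bounds and instead expand $\partial_{ij}^r F$ explicitly as a sum of products of $G$-entries, then invoke the averaged form of the local law and standard Ward-identity cancellations $\sum_a |G_{ab}|^2=\im G_{bb}/\eta$, turning each apparent $\eta^{-1}$ into a tame $N^{\omega}$ factor. This is exactly where $W=\Omega(N)$ is indispensable: only then does Theorem \ref{thm: with gap} provide optimal control down to $\eta\sim N^{-1}$, which is unavailable for narrower bands.
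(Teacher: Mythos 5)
Your proposal follows the Green function comparison route, which is precisely the alternative the paper itself points to (``the Green function comparison theorem from \cite{ErdYauYin2012Univ, KnoYin2013} could be used as well''); the paper's own proof is instead deferred to the continuity-of-dynamics argument of \cite{BouYau2013}, Corollary A.2, which is however built on the same mechanism you use in Steps 2--3: It\^o's formula on the matrix entries, cancellation of the second-order term against the Brownian quadratic variation because the OU drift $-h_{ij}/(2Ns_{ij})$ is tuned to preserve the variance profile, and a third-cumulant power count $N^2\cdot W^{-3/2}\cdot (Ns_{ij})^{-1}\asymp N^{1/2}$ per unit time, integrated over $t\le N^{-1+\delta}$ with $\delta<1/2$. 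That part of your argument is correct and is where $W=\Omega(N)$ enters through the optimal local law, Theorem \ref{thm: with gap}.

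There is, however, a genuine gap in Step 1 and in the derivative bounds that depend on it. You assert that ``for $W=cN$ there is a single eigenvalue in each window of size $\eta_0=N^{-1-\omega}$ with overwhelming probability.'' This is false: the probability that $\lambda_{k+1}-\lambda_k\le N^{-1-\omega}$ is only polynomially small (level repulsion gives at best something like $N^{-2\omega+o(1)}$, cf.\ \eqref{lr}), and for the initial band matrix $H_0$ no level repulsion is available a priori at all --- establishing spectral statistics for $H_0$ is the very point of the paper. When the gap $\lambda_{k\pm1}-\lambda_k$ is of order $\eta_0$, the neighbouring terms in $\im G_{\alpha\alpha}(\lambda_k+\ii\eta_0)=\sum_j \eta_0|u_j(\alpha)|^2/((\lambda_j-\lambda_k)^2+\eta_0^2)$ contribute at the same order as the diagonal term, so your identification $Nu_k(\alpha)^2\approx N\eta_0\im G_{\alpha\alpha}(\lambda_k+\ii\eta_0)$ (note also the missing factor of $N$ in your display) breaks down; the same near-degeneracies contaminate $\partial_{ij}^rF$ because $F$ is evaluated at the random point $\lambda_k(t)$, whose higher derivatives in $H_{ij}$ carry factors $(\lambda_k-\lambda_\ell)^{-1}$. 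This is exactly the technical heart of the eigenvector comparison theorem, and it is resolved in \cite{KnoYin2013} (and in Appendix A of \cite{BouYau2013}) not by assuming the bad event away but by a regularization/exceptional-set argument that tolerates a polynomially small failure probability consistent with the $N^{-\e}$ conclusion. Your sketch needs to either import that argument explicitly or explain how the small-gap event is controlled for \emph{both} $H_0$ and $H_t$; the Ward-identity remark in your ``Main obstacle'' paragraph addresses the $\eta^{-1}$ losses from resolvent derivatives but not this separate degeneracy issue.
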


The proof of this lemma is identical to that of the Corollary A.2  in \cite{BouYau2013} and we thus omit it. 
Instead of  Lemma \ref{lem:continuity}, the Green function comparison theorem from 
\cite{ErdYauYin2012Univ,  KnoYin2013} could be used as well to finish the proof.

We now complete the proof of Theorem \ref{Univ}. Recall that we have proved this theorem for Gaussian divisible ensembles of the form 
\eqref{Hgauss0}. At any time $t$,  
 the entry $h_{ij}(t)$ of $H_t$ for the flow \eqref{eqn:generalDBM}  is distributed as
\begin{equation}\label{eqn:hij}
e^{-\frac{t}{2Ns_{ij}}}H_{ij}(0)+\left(s_{ij}\left(1-e^{-\frac{t}{N s_{ij}}}\right)\right)^{1/2}\mathscr{N}^{(ij)}, \quad |i-j| \le 2 W, 
\end{equation}
where $(\mathscr{N}^{(ij)})_{i\leq j}$ are independent standard Gaussian random variables.
Hence $H_t$ is Gaussian divisible and bulk  universality holds for $t=N^{-1+\delta}$ with $ \delta$ a small positive number. 
By Lemma \ref{lem:continuity},  the bulk statistics of $H_t$ and $H_0$ are the same up to negligible errors. 
We have thus proved Theorem \ref{Univ}.

\subsection{Universality for mean-field perturbations.\ }\label{subsec:univ-proof} We now prove Lemma \ref{popcorn}.
We first recall a general theorem \cite{LY} concerning gap universality (see \cite{ES} for a related result).  We start from the following definition. 
In the rest of the paper, we fix a small number $\frak a>0$, and define the control parameter 
\begin{align}\label{control}
\varphi= W^{\frak a}.
\end{align}
We will be interested in the deformed GOE defined by 
\be\label{deformG}
\wt H_t = V +   \sqrt t Z,
\ee
where $V$ is a deterministic matrix and $Z$ is a $W\times W$ GOE matrix. 
We now list  the assumptions on the initial matrix $V$ at  some energy level $E_0$; 
 in order to formulate them 
we will need  two $W$-dependent  mesoscopic scales  $\eta_*\ge \varphi/W$ and 
 $r  \ge \varphi \nc\eta_* $.

\begin{assumption}\label{def:v}
Let $\eta_*$ and $r$ be two $W$-dependent parameters, such that $\varphi/W\leq \eta_*\leq r/\varphi\leq 1$.
 We assume that there exist    large positive constants $C_1,C_2$ such that 
 \begin{enumerate}[(i)]
  \item \label{boundnorm} The norm of $V$ is bounded, $\|V\|\leq W^{C_1}$.
  \item \label{locallaw} The imaginary part of the Stieltjes transform of $V$ is  bounded from above and below, i.e., 
  \begin{align}\label{e:imasup}
  C_2^{-1}\leq \Im(m_V(z))\leq C_2, \qquad  m_V(z): = \frac{1}{W} \tr (V-z)^{-1}, 
  \end{align}
  uniformly for any $z\in \{E+ \ii \eta: E\in[E_0-r, E_0+r], \eta_*\leq \eta\leq 2\}$.
 \end{enumerate}
A  deterministic matrix $V  $ satisfying these conditions will be called $(\eta_*, r)$-regular at $E_0$.
\end{assumption}

The following theorem was the main result of \cite{LY} (note that the 
size of the matrix $W$ was replaced by $N$ there).

\begin{theorem}[Universality for mean-field perturbations \cite{LY}]\label{thm:gap}  
 Suppose that $V$ is $  (\eta_*, r)$-regular at $E_0$ and set $T$ such that
  $\eta_* \varphi \le T \le r^2 /\varphi$  with $\varphi= W^{\frak a}$.     
Let $j $ be an index so that the $j$-th eigenvalue of $V$,   
$V_j\in   [E_0-r/3, E_0+r/3]$. Denote the eigenvalues of $\wt H_T$ (defined in \eqref{deformG}) by 
$\b \lambda_T =\{\lambda_{T, i}\}_{i=1}^W$
and let
\be
m_{\wt H_T}(z) = \frac1W\tr (\wt H_T-z)^{-1}.
\ee
Recall the definition of the gap observable $O_{j,n}$ from \eqref{O} for some fixed $n$.
 For  $\frak a$  small enough,  there is a constant  $c >0$ (depending on $C_1, C_2, \frak a$) such that 
\begin{align}
   \E^{  \wt H_T} O_{j,n} \left(\b \lambda_T, W\frac{\rho_T(\lambda_{T,j})}{ \rho_{\rm sc}(\lambda_{T,j}) }\right)- \E^{{\rm GOE}_W} O_{j,n}(\b \lambda, W)=\OO( W^{- c} ), 
\end{align}
where 
$$
\rho_T(\lambda_{T,j}) =  \im  m_{\wt H_T}(\lambda_{T, j}+ \ii \eta), \quad   \eta= 
 T/\varphi.
$$
 Furthermore, for any $\delta> 0$ the following  level repulsion estimate holds: 
\be\label{lr}
\P \left( | \lambda_{T,i} - \lambda_{T,i+1}  | \leq  x /W \right) \leq C_\delta  W^\delta  x^{2-\delta} 
\ee
for any $x>0$ (which can depend on $W$) and for all $i$ such that $\lambda_{T,i} \in   [E_0-r/3, E_0+r/3]$.  \nc
\end{theorem}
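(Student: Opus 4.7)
\medskip
\noindent\emph{Proof proposal for Theorem \ref{thm:gap}.} The plan is to realize $\wt H_T = V + \sqrt T \,Z$ as the time-$T$ value of a matrix Ornstein--Uhlenbeck (or, after a trivial time change, Dyson Brownian motion) process starting from the deterministic initial matrix $V$, and then to run the standard three-step universality machinery of Erd\H{o}s--Schlein--Yau adapted to the mean-field perturbation setting. Because $V$ is only assumed to be regular on the local spectral window $[E_0-r, E_0+r]$ down to scale $\eta_*$, everything has to be carried out \emph{locally}, and the scale interplay $\varphi/W \le \eta_* \le T/\varphi \le r^2/\varphi^2$ is what dictates the admissible range of $T$.

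The first step is a \textbf{local law} for the resolvent of $\wt H_T$ in a neighbourhood of $E_0$, down to scale $\eta \sim T/\varphi$. The free semicircular convolution smooths the empirical measure of $V$ at scale $\sqrt T$, and Assumption~\ref{def:v}(ii) propagates from $V$ to $\wt H_T$ via the Schur complement formula and a fixed-point analysis of the self-consistent equation $m_{\wt H_T}(z)\approx m_V(z - T m_{\wt H_T}(z))$. Standard stochastic continuity estimates then give rigidity of the eigenvalues $\lambda_{T,i}$ in the window $[E_0 - r/3, E_0 + r/3]$ at the optimal scale $1/W$, as well as the isotropic delocalization of eigenvectors needed for the next step.

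The second step is \textbf{local equilibration of Dyson Brownian motion}. Let $\b\lambda(t)$ be the eigenvalues of $V + \sqrt t\, Z_t$, where $Z_t$ is a matrix Brownian motion; they satisfy the standard DBM SDEs. The coupling idea of \cite{BouErdYauYin2015} is to run a second DBM $\b\mu(t)$ started from a reference (e.g. GOE) configuration using the \emph{same} Brownian drivers, and to study the differences $v_i(t) = (\lambda_i(t) - \mu_i(t))\,\rho_{\rm sc}(E_0) W$ of rescaled gaps in the local window. These satisfy a discrete parabolic equation whose coefficients are controlled by the rigidity from Step~1; the H\"older regularity result for this equation proved in \cite{ErdYau2012} shows that $v_i(T) - v_j(T)$ is much smaller than $1$ for $|i - j| \le n$ as soon as $T \gg \eta_* \varphi$. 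Together with the known gap universality for the reference ensemble, this yields the comparison of $O_{j,n}$ up to an error $W^{-c}$ after matching densities by the factor $\rho_T(\lambda_{T,j})/\rho_{\rm sc}(\lambda_{T,j})$.

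Finally, the \textbf{level repulsion} estimate \eqref{lr} follows from the local relaxation estimates for the DBM: after time $T \gg 1/W$, any initial condition regular at scale $\eta_*$ produces a gap distribution whose small-gap tail is bounded by that of the GOE, with a short Gronwall-type comparison absorbing the $W^\delta$ factor. The main technical obstacle is ensuring that all the a priori rigidity and delocalization estimates used in the DBM analysis survive \emph{only under the local hypotheses} on $V$ (Assumption~\ref{def:v}), rather than a global semicircle-type assumption; this is why the lower bound $T \ge \eta_*\varphi$ and the upper bound $T \le r^2/\varphi$ are essential, the former guaranteeing that DBM has had enough time to equilibrate at the microscopic scale, the latter ensuring that the spectrum has not diffused beyond the window where $V$ is regular.
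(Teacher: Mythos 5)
This theorem is not proved in the paper at all: it is imported verbatim from \cite{LY} (with $N$ renamed $W$), so there is no internal argument to compare your proposal against. Your outline --- viewing $\wt H_T=V+\sqrt{T}\,Z$ as Dyson Brownian motion at time $T$ started from $V$, proving a local law and rigidity for the deformed ensemble via the subordination relation $m_{\wt H_T}(z)\approx m_V\bigl(z-Tm_{\wt H_T}(z)\bigr)$ using only the local $(\eta_*,r)$-regularity of Assumption \ref{def:v}, and then comparing gaps with a reference GOE flow through the coupled DBM and the H\"older/homogenization estimates of \cite{BouErdYauYin2015} and \cite{ErdYau2012} --- is a faithful description of the strategy actually carried out in \cite{LY}, and the scale constraints $\eta_*\varphi\le T\le r^2/\varphi$ play exactly the roles you assign to them. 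It is, however, a road map rather than a proof: the two genuinely hard steps, namely (a) the local law for $V+\sqrt{T}Z$ at scales $\eta\in[\eta_*,T/\varphi]$ from purely local data on $V$, and (b) the quantitative homogenization of the discrete parabolic equation for the coupled gap differences, are asserted rather than established. The weakest point is the level repulsion \eqref{lr}: ``a short Gronwall-type comparison absorbing the $W^\delta$ factor'' is not an argument. In \cite{LY} this bound is itself a nontrivial output of the coupling/homogenization analysis (the exponent $2-\delta$ is inherited from the GOE reference ensemble), and since \eqref{lr} must hold for $x$ polynomially small in $W$, one needs the quantitative rate of homogenization, not merely the qualitative statement that DBM has locally equilibrated. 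If you intend your text as a reduction to the results of \cite{LY}, it is the correct reduction; as a self-contained proof it is incomplete at precisely those two places.
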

 
\noindent The compensating factor $\frac{\rho_T(\lambda_{T,j})}{ \rho_{\rm sc}(\lambda_{T,j}) }$ is due to our definition of
the observable \eqref{O} with a scaling $\rho_{\rm sc}$.

\begin{proof}[Proof of Lemma \ref{popcorn}.] We apply Theorem \ref{thm:gap}  to the matrix
\be\label{yujia}
\wt H_T=Q_{e_0} = \sqrt{q}A_{1}+V  \ {\rm where}\   V=   \sqrt {1-q} \nc A_2 - B^*(D-e_0)^{-1} B,  
\ee
with the following choices: 
\begin{equation}\label{eqn:parameters}
  T=q=N^{-1+\theta},\   \eta_*= N^{ - 1+\theta/2},\ r = N^{-1/2+\theta},\  E_0=e_0, 
   \;  j= k-\al\ \quad (\al=\cal N_D(e_0)),  \;    \lambda_{T, k}=\xi_{k }(e_0), \; C_1 = 5,    
\end{equation}
and   $C_2$ some large constant (in the regularity assumptions on $V$).
 Remember that $\xi_{k }(e_0)$ is the eigenvalue of $Q_{e_0}$ and $\cal N_D(e_0)$ was defined below \eqref{Yanz}.

In order to verify the  regularity assumption of Theorem \ref{thm:gap}, we need a  local law for $Q_{e_0}$, which is 
stated and proved in Theorem \ref{local law}: from 
\eqref{Evden2},  there exists some $c>0$ such that for any $D>0$  we have, for large enough $N$,
$$
\P\left(\forall z=  E+\ii\eta: \;  |E-e_0|\le r  ; \; \eta_*\le\eta\le c, \quad    \frac1W\Im  \tr (V-z)^{-1} \in [c, c^{-1}] \right)\ge 1-N^{-D}.
$$ 
This verifies that part $(ii)$ of the assumption of Theorem \ref{thm:gap}.

Moreover, since the statement of Lemma \ref{popcorn} concerns only  the set  $\|(D-e_0)^{-1}\|\le N^{4}$, together with the fact that $A_2$ and $B$ are bounded with high probability, 
we have in this set 
$$
 \|\sqrt{1-q}A_2 - B^*(D-e_0)^{-1} B\|\le N^{5}
$$
with  high probability.  This verifies that part $(i)$ of the assumption  of Theorem \ref{thm:gap} with $C_1 = 5$.
  
Recall the  mean field reduction  from Section~\ref{sec:MF}. 
 By \eqref{Yanz} and $\cal C_k(\lambda_k)=\lambda_k$, we have
\be\label{comp}
\left|\xi_{j }(e_0)-\gamma_k \right|= \left|\xi_{k-\al }(e_0)-\gamma_k \right|=  \left|\cal C_{k }(e_0)-\gamma_k \right| \le  \left|\cal C_{k}(e_0)-\cal C_k(\lambda_k)\right|+ 
 \left|\lambda_k-\gamma_k\right|
\le N^{-1+\omega} 
\ee
 with probability larger than $1-N^{-D}$ for any small $\omega>0$ and large $D>0$. Here we have used 
 the rigidity of $\lambda_k$, the assumption $|e_0-  \gamma_k|   \le (\log N)N^{-1} $ and the estimate  \eqref{evder} on $(\rd/\rd e) \cal C_{k}(e)$.

Since 
$\xi_{j }= \xi_j(e_0)$ is the $j$-th eigenvalue of   $\sqrt{q}A_{1}+V$ and let $ V_{j }$ be $j$-th eigenvalue of $V$, we have  
$\xi_{j }(e_0)-V_{j }=\OO(\sqrt q N^\omega)$
with probability larger than $1-N^{-D}$.  Therefore with high probability  
$
V_{j }\in[e_0-r/3, e_0+r/3]
$. Hence we can apply Theorem \ref{thm:gap} to get 
\be\label{22}
\P\left(  \left|\E^{A_1} \left(O_{k-\al ,n}\left(\b {\xi}(e_0), W\frac{ \rho_T(   \xi_{k-\al})   }{ \rho_{\rm sc}(\xi_{k-\al})}\right)\Big| A_2, B, D\right)-
\E^{{\rm GOE}_W} O_{k-\alpha,n}(\b \lambda, W)
\right|\ge N^{-c}\right)\le N^{-c}
\ee
 for some $c>0$.
By  \eqref{comp} and smoothness of $\rho_{\rm sc}$,  we can replace $ \rho_{\rm sc}(\xi_{k-\alpha}) $ with  $\rho_{\rm sc}(\gamma_k)$ up to negligible error.
Furthermore, by  the local law \eqref{Evden} we have  for some $c>0$ 
that 
$$
\P\left(   \forall z=E+\ii\eta: \;  |E-e_0|\le N^{-1/2}, \quad \eta= T/\varphi, \quad  \left| \frac1W\Im  \tr (Q_{e_0}-z)^{-1} -\rho_{\rm sc}(e_0) \right|\le N^{-c}\right)\ge 1-N^{-D}.
$$
Therefore, we can replace $\rho_T(\xi_{k-\al})$ by $\rho_{\rm sc}(e_0)$, again up to negligible error. 
With this replacement, \eqref{22} is exactly the statement of  Lemma \ref{popcorn}, after noticing that 
$
\E^{{\rm GOE}_W} O_{k-\alpha,n}(\b \lambda, W)$  converges, as $W\to\infty$, to a limit independent of the bulk index $k-\alpha$.
\end{proof}

\section{Quantum unique ergodicity}
\label{sec:QUE}

In this section, we prove  Theorem \ref{QUE-Band}, in particular we check that the assumption of Theorem \ref{trivial-prop} concerning the flatness of eigenvector holds.
The following lemma implies the assumption  \eqref{jPP0} by choosing $\b a(i)=1$ for all $1\le i\le W$,
$\b g = (g_1, \ldots, g_N)$ with $g_i = g \1_{i>W}$ 
 and noticing  $0\le \sum_{i=1}^W  \left| \psi^g_j (i) \right|^2\le 1$. 
 We will prove this lemma after completing the proof of Theorem \ref{QUE-Band}.

 \begin{lemma}[Quantum unique ergodicity for Gaussian divisible band matrices]\label{QbM}
Recall  that $\b\psi_k^{\b g}=\begin{pmatrix} \b w_k ^{\b g} \cr \b p_k^{\b g}\end{pmatrix}$  is the $k$-th eigenvector  of  $ H^{\b g} $
with eigenvalue  $ \lambda_k^{\b g} $. Suppose that \eqref{Hgauss0} holds.  Let
$\kappa>0$ be fixed. There exists $\e,\zeta>0$ such that for any   $k\in \llbracket  \kappa N, (1-\kappa ) N\rrbracket$, $\b a\in [-1,1]^W$ and $\delta>0$ we have
\be\label{jPP} 
 \sup_{ \|{\b g}\|_\infty \le N^{-1+  \zeta } }  
 \P\left( \left| \sum_{i=1}^{W} \b a(i) \left(|w_k ^{\b g} (i)| ^2  - \frac1N\right) \right| \ge \delta\right)\le 
    C_\kappa N^{-\e}/\delta^2.
\ee
\end{lemma}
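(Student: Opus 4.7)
The plan is to execute steps (i)--(iii) of the four-step scheme outlined after Theorem~\ref{QUE-Band}, proving QUE first for the eigenvectors of $Q^{\b g}_e$ at a fixed energy, then transferring to the random energy $e=\lambda^{\b g}_k$, and finally upgrading the statement to $\bw^{\b g}_k$ via a normalization obtained by \emph{patching}.

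\textbf{Step 1: QUE at fixed $e$.} For fixed $e\in\R$ and $\b g\in\R^N$, using the Gaussian divisible structure \eqref{Hgauss0} of $H$, decompose
\[
 Q^{\b g}_e \;=\; \sqrt{q}\,A_1 \;+\; V^{\b g}_e,\qquad V^{\b g}_e \;:=\; \sqrt{1-q}\,A_2 - B^*(D^{\b g}-e)^{-1}B.
\]
Since $A_1$ is a $W\times W$ GOE matrix independent of $V^{\b g}_e$, $Q^{\b g}_e$ falls into the Wigner-plus-independent class for which QUE at the mesoscopic time scale $t=q=W^{-1+\theta}$ is established in \cite{BouHuaYau2016} via the eigenvector moment flow of \cite{BouYau2013}. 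The required a priori input is a local semicircular law for $Q^{\b g}_e$ down to the scale $\eta\geq W^{-1+\omega}$, provided by Section~\ref{sec:local}, itself built on the uncertainty principle of Proposition~\ref{apL}. The output is: uniformly in bulk indices $j$, in $\b a\in[-1,1]^W$, and in $\|\b g\|_\infty\leq N^{-1+\zeta}$,
\[
 \P\left(\Big|\sum_{i=1}^{W}\b a(i)\big(|u^{\b g}_j(e)(i)|^2-W^{-1}\big)\Big|\;\ge\;\delta\right)\;\leq\; C\,W^{-\e}/\delta^2.
\]

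\textbf{Step 2: From fixed $e$ to $e=\lambda^{\b g}_k$.} Differentiating $Q^{\b g}_e\bu^{\b g}_j(e)=\xi^{\b g}_j(e)\bu^{\b g}_j(e)$ in $e$ gives
\[
 \partial_e \bu^{\b g}_j(e) \;=\; \sum_{\ell\ne j}\frac{\langle \bu^{\b g}_\ell(e),\,B^*(D^{\b g}-e)^{-2}B\,\bu^{\b g}_j(e)\rangle}{\xi^{\b g}_j(e)-\xi^{\b g}_\ell(e)}\,\bu^{\b g}_\ell(e).
\]
Each numerator is controlled by $\|(D^{\b g}-e)^{-1}B\bu^{\b g}_\ell\|\cdot\|(D^{\b g}-e)^{-1}B\bu^{\b g}_j\|$, and each factor here is of order one by \eqref{brownBand2} combined with $\|B^*(D^{\b g}-e)^{-1}B\bu^{\b g}_j(e)\|=\|(A^{\b g}-\xi^{\b g}_j(e))\bu^{\b g}_j(e)\|=O(1)$. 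The denominators are handled by the level repulsion estimate \eqref{lr} applied to $Q^{\b g}_e$ on an event of overwhelming probability. By rigidity for $H^{\b g}$ (Corollary~\ref{thm:7.1}), $|\lambda^{\b g}_k-\gamma_k|\leq N^{-1+\xi}$ on a high-probability event; taking a grid of energies of mesh $N^{-C}$ inside $[\gamma_k-N^{-1+\xi},\gamma_k+N^{-1+\xi}]$, applying Step~1 at each grid point, and interpolating via the derivative estimate above, we transfer the QUE of Step~1 to the vector $\bu^{\b g}_{k'}(\lambda^{\b g}_k)$ with $k'=k-\mathcal{N}_{D^{\b g}}(\lambda^{\b g}_k)$.

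\textbf{Step 3: From $\bu^{\b g}_{k'}(\lambda^{\b g}_k)$ to $\bw^{\b g}_k$ via patching.} By \eqref{110}, $\bw^{\b g}_k$ is parallel to $\bu^{\b g}_{k'}(\lambda^{\b g}_k)$, so $|w^{\b g}_k(i)|^2 = \|\bw^{\b g}_k\|^2\,|u^{\b g}_{k'}(\lambda^{\b g}_k)(i)|^2$ for every $1\leq i\leq W$. Inserting this identity into Step~2 reduces \eqref{jPP} to the normalization
\[
 \|\bw^{\b g}_k\|^2 \;=\; \sum_{i=1}^{W}|\psi^{\b g}_k(i)|^2 \;=\; \tfrac{W}{N}+o(1)
\]
with the appropriate probabilistic control. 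Because the variance profile \eqref{eqn:band2} is invariant under cyclic permutation of $\llbracket 1,N\rrbracket$, Steps~1--2 apply verbatim to the decomposition of $H^{\b g}$ in which $A^{\b g}$ is replaced by the diagonal $W\times W$ block supported on $I_m:=\llbracket mW/2+1,\,mW/2+W\rrbracket$ (mod $N$), for each $m\in\llbracket 0,4p-1\rrbracket$. Applied with the test vector equal to $+1$ on the first half of $I_m$ and $-1$ on the second half, the resulting QUE forces the $\ell^2$ masses of $\b\psi^{\b g}_k$ on the two halves of $I_m$ to coincide up to $o(N^{-1})$. Chaining these $4p$ identities shows that the $4p$ disjoint half-windows of length $W/2$ carry equal mass; combined with $\|\b\psi^{\b g}_k\|^2=1$ this yields $\|\bw^{\b g}_k\|^2=W/N+o(1)$, and hence \eqref{jPP}.

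\textbf{Main obstacle.} The crucial difficulty is uniformity of all estimates in $\b g$: since $\b g$ has $N$ coordinates ranging over a set of size $N^{-1+\zeta}$, Steps~1--3 must hold simultaneously over this whole range, which is exactly why the local law, Proposition~\ref{apL} and \eqref{lr} are formulated with the supremum over $\b g$ inside the probability. A secondary subtlety is the control of $(D^{\b g}-\lambda^{\b g}_k)^{-1}$ when $\lambda^{\b g}_k$ is close to $\sigma(D^{\b g})$: the uncertainty principle forbids a concentrated resonance with $\sigma(D^{\b g})$ on an event of high probability, but matching this control to the shifted decompositions used in the patching step requires running the entire mean-field reduction uniformly across all $4p$ cyclic shifts.
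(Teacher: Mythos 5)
Your proposal follows essentially the same route as the paper: QUE for $Q^{\b g}_e$ at fixed $e$ via the eigenvector moment flow result of \cite{BouHuaYau2016} (the paper's Lemma~\ref{QsM2}), transfer to the random energy $e=\lambda^{\b g}_k$ by a grid argument plus a bound on $\partial_e\bu^{\b g}_{k'}(e)$ using the uncertainty principle, the local law and level repulsion (Lemma~\ref{QsM}), and finally the normalization $\|\bw^{\b g}_k\|^2\approx W/N$ by patching overlapping half-windows with the antisymmetric test vector. The only point where your sketch is slightly optimistic is the treatment of the nearest-neighbour denominators $|\xi^{\b g}_{k'}-\xi^{\b g}_{k'\pm1}|$: the level repulsion estimate \eqref{lr} gives only polynomially small (not overwhelming) probability for a small gap, so the paper integrates $|\cal C^{\b g}_k-\cal C^{\b g}_{k\pm1}|^{-1}$ in expectation via a H\"older inequality rather than excluding a bad event.
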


\begin{proof}[Proof of Theorem \ref{QUE-Band}.]  We will first prove  Theorem \ref{QUE-Band} for the class of Gaussian divisible band matrix ensemble, which was defined in \eqref{Hgauss0}.    With \eqref{jPP}, we know that there exists $\zeta,\e>0$ such that for any $k\in\llbracket\kappa N,(1-\kappa)N\rrbracket$, $\b a \in[-1,1]^N$, $m\in \llbracket0, N/W-1\rrbracket$, $\|\b g\|_\infty<N^{-1+\zeta}$,  and $\delta>0$, we have 
$$\P\left( \left| \sum_{i= 1}^{ W} \b a(i+mW) \left(|\psi_k ^{\b g} (i+mW)| ^2  - \frac1N\right) \right| \ge \delta\right)\le C_\kappa 
N^{-\e}/\delta^2.
$$
 Then summing up $m\in \llbracket0, N/W-1\rrbracket=0,1, \ldots 2p-1$, we have proved Theorem \ref{QUE-Band} in the case of Gaussian divisible band matrix.  For the general case, we consider  $\b g=0$ for simplicity, without loss of generality.  Recall the definition of $H_t$ in \eqref{eqn:generalDBM}. With \eqref{localque0} for any Gaussian divisible band matrix, we know that for some $\e>0$, 
\be\label{zuoxiu}
 \E^{H_t}\left| \sum_{i=1}^{N} \b a(i) \left(|\psi_k(i)|^2  -\frac1N\right) \right| ^2\le C_{\kappa,p}  N^{-\e}.
\ee
Then comparing $H=H_0$ with $H_t$ using Lemma \ref{lem:continuity}, we have
$$
\left|\left( \E^{H_t}-\E^{H_0}\right)  |\psi_k(i)| ^2\right| \leq C_\kappa  N^{-1-\widetilde\e}, \ \left|\left( \E^{H_t}-\E^{H_0}\right)  |\psi_k(i)| ^2  |\psi_k(j)| ^2\right| \leq C_\kappa  N^{-2-\widetilde\e},  
$$
for some $\widetilde\e>0$ and for any $i, j$.  Together with \eqref{zuoxiu}, we therefore proved
$$\E^{H_0}\left|\sum_{i=1}^{N} \b a(i) \left(|\psi_k ^{\b g} (i)| ^2  -\frac1N\right) \right| ^2\le C_{\kappa, p} ( N^{-\e}+N^{-\widetilde\e}),
$$
which implies the desired result \eqref{localque0} by Markov's inequality.
\end{proof}

We now  prove Lemma \ref{QbM}.    Recall  the notations  in \eqref{H}-\eqref{Qu}, i.e. that 
$\b u^{\b g}_j(e)$, ($e\in \R$ and $j\in \llbracket1, W\rrbracket$)  is  a (real) eigenvector of the matrix 
\be\label{QV}
 Q^{\b g}_e =  A^{\b g}  - B^* (D^{\b g}-e)^{-1}B =  \sqrt{q}A_{1}+V^{\b g},  \quad   V^{\b g}=
 \sqrt {1-q} \nc A_2 +\sum_{1\le i\le W}g_i\b e_i\b e_i^*- B^*(D^{\b g}-e)^{-1} B.
\ee
Note that not only $A$ has a Gaussian divisible decomposition \eqref{ycei} but also $B$ and $D$, however
this latter fact is irrelevant and we will not follow it in the notation.
With the labeling of eigenvalue convention in \eqref{Yanz},  we have  the following relation between $\bu ^{\b g}$ and $w^{\b g}$. 
\be\label{bowl}
\b u^{\b g}_{\hat k}\left( \lambda_k^{\b g}\right)=\frac{w_k^{\b g}}{\|w_k^{\b g}\|}, \quad {\hat k}:=k'(\lambda_k^{\b g})=k-\cal N_D(\lambda_k^{\b g}).
\ee  
To prove Lemma \ref{QbM},
 we first claim that the following QUE  for $\b u^{\b g}_{\hat k}\left( \lambda_k^{\b g}\right)$ holds. 
 The challenge is that we  consider the matrix $Q^{\b g}_e$ with a random shift $e$, namely $e=\lambda_k^{\b g}$,
 and the index $\hat k$ is also random.

\begin{lemma}[Quantum unique ergodicity for mean-field matrices with random shift $e$]\label{QsM} Let $\kappa>0$ be fixed. 
Under the assumption of Lemma \ref{QbM} and \eqref{ycei}, there exists $\e,\zeta>0$ such that for any $k\in\llbracket\kappa N,(1-\kappa)N\rrbracket$, $\b a\in[-1,1]^W$ and $\delta>0$ we have ($[\bx]_i$ denotes  the $i$-th component of a vector $\bx$)
\be\label{41}
\sup_{\|{\b g}\|_\infty \le N^{-1+  \zeta } }  
\P\left( \left| \sum_{i=1}^{W} \b a(i)  \left(\Big[\b u^{\b g}_{\hat k}\left( \lambda_k^{\b g}\right)\Big]_i ^2  - \frac{1}{W}\right) \right| \ge \delta\right)\le C_\kappa 
N^{-\e}/\delta^2, \qquad {\hat k}:=k'(\lambda_k^{\b g})=k-\cal N_D(\lambda_k^{\b g}).
\ee 
 \end{lemma}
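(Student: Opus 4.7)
The plan is to deduce Lemma \ref{QsM} from two building blocks: (a) QUE for $Q_e^{\b g}$ at a \emph{deterministic} energy $e$, which follows from the mean-field QUE result of \cite{BouHuaYau2016} applied to the Gaussian-divisible matrix $Q_e^{\b g} = \sqrt q\, A_1 + V^{\b g}$ with $A_1$ a $W\times W$ GOE independent of $V^{\b g}$; and (b) a quantitative continuity estimate showing that $\b u_{k'}^{\b g}(e)$ depends slowly enough on $e$ that one can transfer the deterministic statement (a) to the random energy $e=\lambda_k^{\b g}$, with its random label $\hat k$.

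For (a), the local law proved in Section \ref{sec:local} verifies the regularity hypothesis on $V^{\b g}$ required by the framework of \cite{BouHuaYau2016} on an event of overwhelming probability. The eigenvector moment flow employed there actually yields, at the level of moments of all orders, the bound
$$
\sup_{e}\E\left|\sum_{i=1}^W \b a(i)\Big([\b u_{k'(e)}^{\b g}(e)]_i^2 - \tfrac{1}{W}\Big)\right|^{2p} \le C_p\, W^{-\e' p}
$$
for some $\e'>0$, where the supremum runs over $e$ in a mesoscopic neighborhood of $\gamma_k$ and $k'(e) := k - \mathcal{N}_{D^{\b g}}(e)$. By Markov's inequality, the probability of $\delta$-failure becomes $\OO(N^{-D})$ for any prescribed $D>0$ upon choosing $p$ large, which is the precise strength needed to perform a union bound later.

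For (b), I would first invoke the rigidity bound of Corollary \ref{thm:7.1} (applied to $H^{\b g}$) to confine $\lambda_k^{\b g}$ to $I_k := [\gamma_k - N^{-1+\tau}, \gamma_k + N^{-1+\tau}]$ with overwhelming probability, and then introduce a deterministic grid $\{e_j\}\subset I_k$ of spacing $N^{-L}$ avoiding the eigenvalues of $D^{\b g}$, with $L$ large. A union bound over the $\OO(N^L)$ grid points, combined with (a), produces the pointwise flatness
$$
\left|\sum_{i=1}^W \b a(i)\Big([\b u_{k'(e_j)}^{\b g}(e_j)]_i^2 - \tfrac{1}{W}\Big)\right|\le \delta/2
$$
simultaneously for every $j$, with probability $1-N^{-D}$. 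Let $e_*$ denote the grid point closest to $\lambda_k^{\b g}$, chosen to lie on the same segment between consecutive eigenvalues of $D^{\b g}$ as $\lambda_k^{\b g}$ (an event of overwhelming probability, using that $A_1$ is independent of $D^{\b g}$). Standard analytic perturbation theory yields
$$
\|\partial_e \b u_{\hat k}^{\b g}(e)\| \le \frac{\|B^*(D^{\b g}-e)^{-2}B\, \b u_{\hat k}^{\b g}(e)\|}{\min_{j\neq \hat k}|\xi^{\b g}_{\hat k}(e)-\xi^{\b g}_j(e)|}.
$$
The numerator is $\OO(N^C)$ with overwhelming probability by the operator version of the uncertainty principle \eqref{brownBand2} together with the trivial bound \eqref{bbu}, while the denominator is bounded below by $W^{-1-o(1)}$ with overwhelming probability by the level repulsion estimate \eqref{lr} of Theorem \ref{thm:gap}. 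Choosing $L$ larger than $C+3$ then makes $\|\b u_{\hat k}^{\b g}(\lambda_k^{\b g}) - \b u_{k'(e_*)}^{\b g}(e_*)\|$ polynomially small in $N$ (after fixing signs to make $e\mapsto \b u_{\hat k}^{\b g}(e)$ continuous on that segment), and this error summed against $\b a$ is far below $\delta/2$ for $\delta\gg N^{-c}$. Combined with the flatness at $e_*$ from the union bound, this proves \eqref{41}.

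The main technical obstacle is the continuity step: \emph{a priori}, $\partial_e \b u_{\hat k}^{\b g}(e)$ can be very large when two eigenvalues of $Q_e^{\b g}$ approach each other, or when $e$ approaches $\sigma(D^{\b g})$. Controlling it rigorously requires both the uncertainty principle \eqref{brownBand2} (for the numerator) and the level repulsion \eqref{lr} (for the denominator) — precisely the two new inputs developed in the present paper. A minor additional subtlety is that the label $k'(e)$ jumps whenever $e$ crosses an eigenvalue of $D^{\b g}$; this is handled by choosing the grid to avoid $\sigma(D^{\b g})$ and by using the independence of $A_1$ and $D^{\b g}$ to ensure $\lambda_k^{\b g}$ stays away from $\sigma(D^{\b g})$ by at least $N^{-10}$, say, with overwhelming probability.
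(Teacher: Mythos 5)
Your high-level architecture (QUE at a deterministic energy, then transfer to the random energy $\lambda_k^{\b g}$ via a grid and a continuity estimate) matches the paper's, but both of your key quantitative inputs are stronger than what is actually available, and the argument does not close without them. First, the fixed-energy QUE is only known with a \emph{polynomially} small failure probability: Theorem \ref{c:que} (Corollary 1.3 of \cite{BouHuaYau2016}) gives $\P\left(|\sum_i \b a(i)(Wu_j^2(i)-1)|>\delta\right)\le C(W^{-\e}+\|\b a\|_1^{-1})/\delta^2$, not an overwhelming-probability bound; your claim that the moment flow yields $\E|\cdot|^{2p}\le C_p W^{-\e' p}$ for all $p$ is not supplied by the cited input and is not proved anywhere in the paper. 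Consequently your union bound over the $\OO(N^{L-1})$ points of an $N^{-L}$-spaced grid (with $L$ large, since it must beat the polynomial size of $\|B^*(D^{\b g}-e)^{-2}B\|$) produces a total failure probability $\gg 1$. Second, the level repulsion estimate \eqref{lr} gives $\P(|\lambda_{T,i}-\lambda_{T,i+1}|\le x/W)\le C_\delta W^\delta x^{2-\delta}$, which for $x=W^{-c}$ is only $\approx W^{-2c}$: it does \emph{not} give a lower bound on the minimal gap with overwhelming probability, let alone uniformly in $e$ along the path from $e_*$ to $\lambda_k^{\b g}$. So your pointwise bound on $\|\partial_e\b u^{\b g}_{\hat k}(e)\|$ fails on an event of only polynomially small probability, and even granting it you are forced into the ultra-fine grid whose union bound is broken by the first problem.

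The paper resolves this tension differently, and this is the substantive step you are missing. It uses a \emph{coarse} grid $e_m=\gamma_k+mN^{-1-\zeta}$ with $|m|\le N^{3\zeta}$, so the union bound costs only a factor $N^{3\zeta}$, compatible with the $N^{-\e}/\delta^2$ input from Lemma \ref{QsM2}. The price is that the eigenvector must be transported over an interval of length $N^{-1-\zeta}$, over which any worst-case derivative bound is useless. Instead the paper integrates the perturbation formula for $\frac{\rd}{\rd e}\left(\b u^{\b g}_{k'}(e),J\,\b u^{\b g}_{k'}(e)\right)$, splits the sum over $\ell$ into far indices (controlled by the spacing estimate \eqref{Evden3}, contributing $\OO(N^{1+3\om})$, harmless after multiplying by the interval length) and the neighbors $\ell=k\pm1$, and for the latter applies H\"older's inequality in $e$ together with $\E\,|\cal C^{\b g}_k(e)-\cal C^{\b g}_{k\pm1}(e)|^{-3/2}\le C N^{3/2+\om}$ --- a finite negative moment which \emph{is} exactly what level repulsion \eqref{lr} delivers, since the exponent $3/2<2-\delta$ makes the tail integrable. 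This averaged, in-expectation control of near-degeneracies is what your worst-case pointwise argument cannot reproduce; without it the transfer from the deterministic grid point to the random energy does not go through.
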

\begin{proof}[Proof of Lemma \ref{QbM}] Clearly, to deduce  \eqref{jPP} from \eqref{41},  one only needs to show that there exists $\widetilde\e>0$ such that 
\be\label{ngza} 
\sup_{ \|{\b g}\|_\infty \le N^{-1+  \zeta}}
 \P\left( \left|\sum_{1\le i\le   W} \psi_k^{\b g}(i)^2-\frac{W}{N}\right|\ge N^{-\widetilde\e}\right)\le C_\kappa 
    N^{-\widetilde\e }.
 \ee
To see this, we  first note that by choosing 
$
{\b a}(i)={\mathds{1}}_{i\le W/2}-\mathds{1}_{i> W/2},
$
and $\delta=N^{-\e/10}$ in \eqref{41}, we have 
$$
 \P\left( \left|\sum_{1\le i\le   W/2} \psi_k^{\b g}(i)^2-\sum_{W/2< i\le   W} \psi_k^{\b g}(i)^2\right|\ge N^{-\e/10}\right)\le 
 C_\kappa 
    N^{-\e/10}.
$$
In the above equation, the index set $\llbracket1,W \rrbracket$ which determines the decomposition \eqref{H} can be replaced by $\llbracket1+ n   W/2,W+ n   W/2 \rrbracket $      with $n\in\llbracket 0, 2(N/W-1)\rrbracket$. By a simple union bound, we can assume all these bounds hold 
simultaneously.  In particular, the local $\ell^2$-norms of $\psi_k^{\b g}$ on each consecutive $W/2$ batches
of indices  coincide approximately.
As $\b\psi_k^{\b g}$ is normalized, all these local norms are close to $W/(2N)$, which  implies \eqref{ngza} and completes the proof. 
\end{proof}

In Lemma \ref{QsM},   the energy $\lambda_k^{\b g}$ is random  and the  index includes a random shift $\cal N_{D^{\b g}}(\lambda_k^{\b g})$.
To prove Lemma \ref{QsM}, we need the following lemma (proved at the end of this section) which replaces the random parameter $\lambda_k^{\b g}$ 
in \eqref{41} by a deterministic one.

\begin{lemma}[Quantum unique ergodicity for mean-field matrices with fixed shift $e$]\label{QsM2} 
Let $\kappa>0$ be fixed. Under the assumption of Lemma \ref{QbM} and \eqref{ycei}, there exists $\e,\zeta>0$ such that for any $k\in\llbracket\kappa N,(1-\kappa)N\rrbracket$, 
$|e-\gamma_k| \le N^{-1+ 2 \zeta}$, $\|{\b g}\|_\infty \le N^{-1+  \zeta}$,
$\b a\in[-1,1]^W$, and $\delta>0$,
we have 
$$
\P\left(\exists j: |j-k'|\leq N^\zeta,\ \left| \sum_{i=1}^{W} \b a(i)  \left(\left[{\b u}^{\b g}_{j}\left( e\right)\right]_i^2  - \frac1W\right) \right| \ge \delta\right)\le 
C_\kappa N^{-\e}/\delta^2
$$
where $k'=k'(e)= k-\cal N_{D^{\b g}}(e)$.
\end{lemma}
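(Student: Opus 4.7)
The plan is to apply the quantum unique ergodicity theorem for mean-field (deformed GOE) ensembles established in \cite{BouHuaYau2016}, which is the eigenvector counterpart of Theorem \ref{thm:gap}. Thanks to the Gaussian-divisible structure \eqref{Hgauss0}--\eqref{ycei}, we write, conditionally on the $\sigma$-algebra $\mathcal{F}$ generated by $(A_2,B,D)$,
\be
Q_e^{\b g} = \sqrt{q}\, A_1 + V^{\b g},\quad V^{\b g}=\sqrt{1-q}\,A_2+\sum_{1\le i\le W} g_i \b e_i\b e_i^* - B^*(D^{\b g}-e)^{-1}B,
\ee
with $A_1$ an independent $W\times W$ GOE matrix. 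This matches the template \eqref{deformG} with $t=q=N^{-1+\theta}$ and $V=V^{\b g}$ that is $\mathcal{F}$-measurable. The QUE result of \cite{BouHuaYau2016} therefore applies once the regularity assumptions are verified, producing QUE for the eigenvectors $\b u^{\b g}_j(e)$ whose eigenvalues lie in the bulk window $[E_0-r/3,E_0+r/3]$.

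The verification proceeds as follows. I would take $\eta_* = N^{-1+\theta/2}$ and $r=N^{-1/2+\theta}$, matching the choices in \eqref{eqn:parameters}, and check that $V^{\b g}$ is $(\eta_*,r)$-regular at $E_0=e$ in the sense of Assumption \ref{def:v}. Item (i), the norm bound, follows from standard operator-norm estimates $\|A_2\|,\|B\|=\OO(1)$ with overwhelming probability, together with the uncertainty principle Proposition \ref{apL}, which controls $\|B^*(D^{\b g}-e)^{-1}B\|$ on a high-probability event. Item (ii), the bounds on $\Im m_{V^{\b g}}(z)$ for $\Im z\ge \eta_*$, follows from the local law for $Q_e^{\b g}$ (Theorem \ref{local law} in Section \ref{sec:local}) combined with a one-line resolvent identity, since $\sqrt{q}\,A_1$ has operator norm $\OO(\sqrt{q})\ll\eta_*$ and therefore negligibly perturbs the resolvent on this scale. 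The diagonal shift $\sum g_i\b e_i\b e_i^*$ has norm $\|\b g\|_\infty=N^{-1+\zeta}\ll\eta_*$ and is absorbed into the same perturbation argument; the same observation applies to the dependence of $D^{\b g}$ on the $g_i$ with $i>W$.

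It remains to match the target index $k'=k-\mathcal{N}_{D^{\b g}}(e)$ to the range of indices to which the theorem of \cite{BouHuaYau2016} applies. By \eqref{Yanz}, the eigenvalue $\xi_{k'}^{\b g}(e)$ equals $\cal C_k^{\b g}(e)$; eigenvalue rigidity (Corollary \ref{thm:7.1}) together with the assumption $|e-\gamma_k|\le N^{-1+2\zeta}$ gives $|\lambda_k^{\b g}-e|=\OO(N^{-1+C\zeta})$, and the a priori slope bound of Lemma \ref{holo} then yields $|\xi_{k'}^{\b g}(e)-e|=\OO(N^{-1+C\zeta})$. Combined with the local law for $Q_e^{\b g}$ (which gives eigenvalue rigidity on the scale $W^{-1+\oo(1)}$ inside $[E_0-r,E_0+r]$), every index $j$ with $|j-k'|\le N^\zeta$ corresponds to an eigenvalue of $Q_e^{\b g}$ lying inside the bulk window $[E_0-r/3,E_0+r/3]$. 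A union bound over this polynomially large window of indices, followed by Markov's inequality as in \eqref{eqn:QUEintro}, yields the stated probabilistic QUE estimate uniformly in $\b a\in[-1,1]^W$.

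I expect the main obstacle to be verifying item (ii) of Assumption \ref{def:v} at the near-optimal scale $\eta_*=N^{-1+\theta/2}$: this requires the local semicircle law for $Q_e^{\b g}$ down to just above the eigenvalue spacing, and is precisely the content of Theorem \ref{local law} in Section \ref{sec:local}, which is obtained via Schur complement reduction from the local law of \cite{ErdKnoYauYin2013} for $H$ combined with the uncertainty principle of Section \ref{sec:uncertainty}. Once this regularity is in hand the rest is routine, and the extension from Gaussian-divisible to general distributions is handled exactly as in the end of the proof of Theorem \ref{QUE-Band}, by Lemma \ref{lem:continuity}.
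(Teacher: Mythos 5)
Your proposal follows essentially the same route as the paper: apply the QUE theorem of \cite{BouHuaYau2016} to $Q_e^{\b g}=\sqrt{q}A_1+V^{\b g}$ conditionally on $(A_2,B,D)$ with the same choices of $T,\eta_*,r,E_0$, verify Assumption \ref{def:v} and \eqref{e:delocalize} via the local law of Section \ref{sec:local}, and place the indices $|j-k'|\le N^\zeta$ inside the bulk window via rigidity and the slope bound before a union bound. The one imprecision is your claim that Proposition \ref{apL} controls $\|B^*(D^{\b g}-e)^{-1}B\|$: the uncertainty principle bounds resolvents of $Q_e^{\b g}$ (through Lemma \ref{lem: obG}), not $Q_e^{\b g}$ itself, and the hypothesis $\|V\|\le W^{C_1}$ is instead secured by working on an event where $\|(D^{\b g}-e)^{-1}\|$ is polynomially bounded, as in Lemma \ref{popcorn}; also the final passage to non-Gaussian-divisible entries is not needed here, since Lemma \ref{QsM2} is stated under \eqref{Hgauss0}.
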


\begin{proof}[Proof of Lemma \ref{QsM}] 
Since $ | \lambda_k^{\b g} - \lambda_k | \le  \|{\b g}\|_\infty$ and the rigidity estimate holds for $\lambda_k$ (see \eqref{rigidity}), with high probability we have
\be\label{rigig}
\lambda_k^{\b g}-\gamma_k=\OO(N^{-1+\zeta})
\ee
for any  $\|{\b g}\|_\infty \le N^{-1+  \zeta}$.

We discretize the set of the parameter $e$.
Denote $e_m=\gamma_k+mN^{-1- \zeta}$. For small enough  $ \e,\zeta>0$, for any fixed $\b a$ and $\|\b g\|_\infty$ as in the assumptions of Lemma \ref{QsM2}, we thus have, from this lemma (and a union bound),
\be\label{YPz}
\P\left(\exists m\in \Z, \exists j:\ |m|\le N^{3\zeta},\  |j-k'(e_m)|\leq N^\zeta,\ \left| \sum_{i=1}^{W} \b a(i)  \left(\left[ {\b u}^{\b g}_{j}\left( e_m\right)\right]_i^2  -\frac{1}{W}\right) \right| \ge \delta\right)\le 
   C_\kappa N^{-\e}/\delta^2.
\ee
 Using   \eqref{rigig},   we have  with high probability 
that there exists a random integer $|\wt m|\le N^{3\zeta}$  such that  \nc
\be\label{wtm}
\left|e_{\wt m}-\lambda_k^{\b g}\right|\le N^{-1-\zeta}.
\ee
Defining  the $W\times W$ matrix  $J$ by $J_{ij}:={\b a}(i) \delta_{ij}$ and setting  $e^*:=\lambda_k^{\b g}$, we have 
\be\label{zap}
\sum_i {\b a}(i)\left[\b u^{\b g}_{\hat k}\left( \lambda_k^{\b g}\right)\right]_i^2
=\left({\b u}^{\b g}_{\hat k}\left( e^* \right), J{\b u}^{\b g}_{\hat k}\left(e^* \right) \right)
=\left({\b u}^{\b g}_{k'(e_{\wt m})}\left(e_{\wt m} \right), J {\b u}^{\b g}_{k'(e_{\wt m})}\left(e_{\wt m} \right) \right)
+  \int _{e_{\wt m}}^{e^*} \frac{\rd }{\rd e}\Big({\b u}^{\b g}_{k'(e)}(e), J  {\b u}^{\b g}_{k'(e)}(e) \Big)\rd e.
\ee
From (\ref{YPz}) and (\ref{wtm}),
$$
\P\left( \left| \sum_{i=1}^{W} \b a(i)  \left(\left[{\b u}^{\b g}_{k'(e_{\wt m})}\left(e_{\wt m} \right)\right]_i^2  -\frac{1}{W}\right) \right| \ge \delta\right)\le 
   \widetilde C_\kappa N^{-\e}/\delta^2.
$$
We therefore just need to bound the second term on the right hand side of (\ref{zap}).
A simple calculation yields (we now abbreviate $k'=k'(e)$ and similarly  $\ell'=\ell'(e)=\ell - \cal N_D(e)$) 
$$
  \frac{\rd }{\rd e}\Big( {\b u}^{\b g}_{k'}(e), J \,  {\b u}^{\b g}_{k'}(e) \Big)
 =   2  \sum_{\ell \ne k}\frac{\Big({\b u}^{\b g}_{k'}(e), J \,   {\b u}^{\b g}_{\ell'} (e) \Big)}{\cal C^{\b g}_{k}(e)-\cal C^{\b g}_\ell (e)}
\left({\b u}^{\b g}_{\ell'}  (e),  B^* \frac{1}{(D^{\b g}-e)^2}B \;  {\b u}^{\b g}_{k'}(e)\right).
$$
Together with $\|\b a\|_\infty\leq 1$, this gives
 $$
 \left|  \frac{\rd }{\rd e}\Big({\b u}^{\b g}_{k'}(e), J \, {\b u}^{\b g}_{k'}(e) \Big)\right|
  \le  \sum_{\ell \ne k}
  \frac{C}{|\cal C^{\b g}_{k}(e)-\cal C^{\b g}_\ell (e)|}  \Big\| \frac{1}{D^{\b g}-e} B {\b u}^{\b g}_{\ell'}(e) \Big\|   \Big\| \frac{1}{D^{\b g}-e} B {\b u}^{\b g}_{k'}(e) \Big\|.
 $$
  By  \eqref{evbound}, 
  for all $ e \not \in \sigma(D^{\b g}) $, we can bound $ \Big\| \frac{1}{D^{\b g}-e} B {\b u}^{\b g}_{\ell'}(e) \Big\| $ by 
$C (1+|{\cal C}^{\b g}_{\ell'} (e) |)$ 
 with high probability.  Since  for $e\in [e_1, e_{N^{3\zeta}}] $, 
$\cal C^{\b g}_k(e) =\OO(1)$ with high probability, we have 
$$
\left|  \frac{\rd }{\rd e}\Big({\b u}^{\b g}_{k'}(e), J \, {\b u}^{\b g}_{k'}(e) \Big)\right|
\le  C\sum_{\ell\ne k}    \frac{C(1+|\cal C^{\b g}_\ell (e) |)}
{|\cal C^{\b g}_k(e)-\cal C^{\b g}_\ell (e)|} , \qquad \ e\in [e_1, e_{N^{3\zeta}}] \setminus \sigma(D^{\b g}),
$$
with high probability. 
Using  \eqref{evder} and \eqref{Evden3} in Theorem \ref{local law} with $t=q$ (note that,
with the notations of Theorem \ref{local law}, we have
 $\cal C^{\b g}_k(e)=\xi^{\b g}_{k'}(e,q,q)$, $k'=k-\cal N_{D^{\b g}}(e)$  and $ \xi_{k}^{\b g}(e,q,t)$ is  the $k$-th eigenvalue of $Q ^{\b g}_e(t, q)$, which is defined in  \eqref{defqbt}),   we have
$$
\sum_{\ell: |\ell - k|\ge  2 N^{2\om}    }
    \frac{C(1+|\cal C^{\b g}_\ell (e) |)} {|\cal C^{\b g}_k(e)-\cal C^{\b g}_\ell (e)|}   \le  C N^{1 +
    3\om},   \qquad {e\in [e_1, e_{N^{3\zeta}}] \setminus  \sigma(D^{\b g})}
$$
with high probability  for any small $\om$.
We have thus proved that with high probability
$$
\left|  \frac{\rd }{\rd e}\Big( {\b u}^{\b g}_{k'}(e), J \, {\b u}^{\b g}_{k'}(e) \Big)\right| \le \sum_{\ell: |\ell - k|\le  2 N^{2\om}   }
  \frac{  1}{|\cal C^{\b g}_k(e)-\cal C^{\b g}_\ell (e)|}  + C N^{1+3\om}
$$
for any $  e\in[e_1, e_{N^{3\zeta}}]\setminus \sigma(D^{\b g})$.
Inserting the last equation  into \eqref{zap}, using \eqref{wtm},
the ordering of the curves $\mathcal{C}^{\b g}_k$ in $k$, and choosing $\omega\leq \zeta/10$, we obtain that with high probability,  
\be\label{zap2}
\left|\left({\b u}^{\b g}_{k'}\left(\lambda_m^{\b g} \right), J{\b u}^{\b g}_{k'}\left(\lambda_m^{\b g} \right) \right)
-\left({\b u}^{\b g}_{k'}\left(e_{\wt m} \right), J{\b u}^{\b g}_{k'}\left(e_{\wt m} \right) \right)\right|
\le C  N^{ 2 \om} \nc  \int _{e_{\wt m}}^{e^*}   \sum_{\ell=k\pm1}\frac{  1}{|\cal C^{\b g}_k(e)-\cal C^{\b g}_\ell(e)|}\rd e +N^{  -\zeta/2}.
\ee
 By H\"older's inequality, we have 
\begin{align}\label{weakz}
  \E   \left|  \int _{e_{\wt m}}^{e^*} 
 \frac1{|\cal C^{\b g}_k(e)-\cal C^{\b g}_\ell(e)|}\rd e \right|  
  &\le  \left(\E  \left|  \int_{e_{\wt m}}^{e^*}  \rd e \right|\right)^{1/3}  \left( \E     \int_{e_{\wt m}}^{e^*}       \frac{1 }{ |\cal C^{\b g}_k(e)-\cal C^{\b g}_\ell(e)|^{3/2}}  \  \rd e \right)^{2/3}  \\\nonumber
& \le  C N^{-\zeta- 1/3}    \left(  N^{-1+\zeta}\max_{e: |e-\gamma_k|\le N^{-1+{ 2}\zeta}}\E     \left| \cal C^{\b g}_k(e)-\cal C^{\b g}_\ell(e)\right|^{-3/2}\right)^{2/3}.
\end{align} 

As in the proof of Lemma \ref{popcorn}, we  apply Theorem \ref{thm:gap}  to the operator $ Q^{\b g}_e$ in \eqref{QV}. 
We can  similarly verify  that Assumption \ref{def:v} 
holds with high probability and thus  the  level repulsion estimate \eqref{lr} holds. 
Since $|\b g|\ll N^{-1/2}\ll r$ ($r$ is chosen as in (\ref{eqn:parameters})),  we have  $V^{\b g}_k\in [e-r/3, e+r/3]$ for any $k$ such that  $|e-\gamma_k|\le N^{-1+2\zeta}$. \nc
Thus     for any small $\om$ we have
$$
 \max_{e: |e-\gamma_k|\le N^{-1+2\zeta}}\E     \left|  \cal C^{\b g}_k(e)-\cal C^{\b g}_\ell(e)\right|^{-3/2}  \le  C_\om N^{3/2+\om}, \qquad  \ell = k\pm 1.
$$
Together with  the Markov inequality, \eqref{weakz} and \eqref{zap2}, this concludes the proof of  Lemma \ref{QsM}. 
\end{proof}

\begin{proof}[Proof of Lemma \ref{QsM2}.]
 We will need  the local QUE from \cite{BouHuaYau2016}.
 Remember the notations from Subsection \ref{subsec:univ-proof} and the  control parameter $\varphi= W^{\frak a}=cN^{\frak a}$. 
Let $\bu_1(t), \ldots, \bu_W(t)$ 
be the real eigenvectors for the matrix $\wt H_t$ defined in \eqref{deformG}
and let $u_j(i,t)$ be the $i$-th component of $\bu_j(t)$.
The following 
result is the content of Corollary 1.3 in \cite{BouHuaYau2016}.

  \begin{theorem}[Quantum unique ergodicity for mean-field perturbations \cite{BouHuaYau2016}]\label{c:que}
 We assume the initial matrix $\wt H_0= V$  satisfies  Assumption \ref{def:v} in Subsection \ref{subsec:univ-proof}. 
 We further assume that there exists a
small  constant $ \frak b$ such that
 \begin{align} \label{e:delocalize}
 \big |  (\wt H_0-z)^{-1}_{ij} - m_0(z) \delta_{ij}  \big | \leq    \frac{1}{W^{\frak b}},    \ {\rm with}\ m_0(z) = \frac 1 N \tr (\wt H_0-z)^{-1}, 
\end{align}
uniformly in $\{z=E+\ii \eta: E\in[E_0-r, E_0+r], \eta_*\leq \eta\leq r \}$ 
with $E_0, \eta_*$ and $r$ as in Assumption  \ref{def:v}.
Then the following quantum unique ergodicity holds: for any $ \mu>0$ there exists $\e,C_\mu > 0$ (depending also on $\frak a, \frak b$
and $C_2$ from Assumption  \ref{def:v}) such that for any $T$ with $\varphi \eta_* \le  T \le r/ \varphi $, $\b a\in[-1,1]^W$, 
and  $\delta> 0$, we have
\be\label{localque}
\sup_{ j: |\lambda_{T,j}-E_0| < {(1-\mu)}r }  \P\left(\left|\frac{1}{\|{\bf a}\|_1}\sum_{i=1}^W \b a(i)(Wu_j^2(i, T)-1) \right|>\delta \right)\leq C_\mu\ \left(W^{-\e}+\|{\bf a}\|_1^{-1}\right)/\delta^{2}.
\ee
\end{theorem}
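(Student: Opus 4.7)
The natural strategy for this theorem is the eigenvector moment flow method introduced in \cite{BouYau2013}. First, observe that $\wt H_t = V + \sqrt{t}Z$ can be realized as a matrix Ornstein--Uhlenbeck process started from $V$, so that its eigenvalues $\la_k(t)$ and eigenvectors $\b u_k(t)$ jointly satisfy the coupled Dyson Brownian motion and Dyson vector flow SDEs. Fix a unit direction $\b q \in \R^W$ and set $z_k(t) = \sqrt{W}\langle \b q, \b u_k(t)\rangle$. An Itô calculation shows that the normalized moments
\be
  f_{t,\b\la}(\b\eta)=\E\qB{\prod_k \frac{z_k(t)^{2\eta_k}}{(2\eta_k-1)!!}\,\Big|\,\b\la}
\ee
satisfy, as functions of configurations $\b\eta$ of non-negative integers of fixed total mass, a discrete parabolic equation with generator
\be
  (\mathscr L_t f)(\b\eta) = \sum_{j \neq k}\frac{2\eta_j(1+2\eta_k)}{W(\la_j(t)-\la_k(t))^2}\,[f(\b\eta^{jk})-f(\b\eta)],
\ee
whose equilibrium $f\equiv 1$ corresponds to Gaussian behavior of the overlaps $z_k$.

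The argument then proceeds in three steps. The a priori isotropic bound coming from \eqref{e:delocalize}, summed via a Ward-type identity applied to $\Im (\wt H_0-z)^{-1}_{\b q\b q}$ over the spectral window $[E_0-r,E_0+r]$, shows that $f_{0,\b\la}(\b\eta) \leq W^{o(1)}$ uniformly for bulk configurations; thus the flow starts from a nearly optimal state. Second, the eigenvalue regularity supplied by Assumption \ref{def:v} and Theorem \ref{thm:gap} yields rigidity and level repulsion of $\la_k(t)$ at the microscopic scale $1/W$ for all $t\geq \eta_*$; feeding this into a finite-speed-of-propagation / maximum-principle analysis of $\mathscr L_t$ on the configuration graph delivers the quantitative relaxation $|f_{T,\b\la}(\b\eta)-1|\leq W^{-\e}$ for bulk $\b\eta$, provided $T\geq \varphi \eta_*$. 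Restricting to $|\b\eta|=1$ already gives single-direction QUE: $\P(|Wu_j^2(\b q, T)-1|>\delta)\leq C W^{-\e}/\delta^{2}$ for bulk indices.

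Third, to produce the sharpened bound with the $\|\b a\|_1^{-1}$ correction for a general test vector $\b a \in [-1,1]^W$, I would write $\sum_i a_i(Wu_j(i,T)^2-1) = \sum_i a_i (z_j(\b e_i,T)^2 - 1)$ and compute its second moment. The eigenvector moment flow applied at $|\b\eta|=2$ controls the cross terms $\E[z_j(\b e_i)^2 z_j(\b e_\ell)^2]$ up to an additive $W^{-\e}$, and the orthonormality constraint $\sum_i u_j(i)^2=1$ generates a cancellation that reduces the variance to $O(\|\b a\|_1 + \|\b a\|_1^2 W^{-\e})$; Chebyshev then yields the stated estimate. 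The principal obstacle is controlling the singular coefficients $(\la_j-\la_k)^{-2}$ of $\mathscr L_t$ uniformly for $t\in[\eta_*, r/\varphi]$, which demands sharp microscopic rigidity and level-repulsion inputs; a secondary difficulty is matching the scale $\eta_*$ at which \eqref{e:delocalize} is merely assumed with the initial regularity needed for the moment flow, typically overcome by a short preparatory DBM step of length $\ll \eta_*$ that smooths the eigenvectors while preserving the local law hypothesis.
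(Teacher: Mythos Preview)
The paper does not prove this theorem. It is quoted verbatim as an external result: the sentence immediately preceding the statement reads ``The following result is the content of Corollary 1.3 in \cite{BouHuaYau2016}.'' There is therefore no proof in the paper to compare your proposal against.

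That said, your sketch is a faithful outline of the method actually used in \cite{BouHuaYau2016} (and its predecessor \cite{BouYau2013}): realize $\wt H_t$ via the matrix Dyson Brownian motion, derive the parabolic eigenvector moment flow equation for the normalized moments $f_{t,\b\la}(\b\eta)$, use the a priori delocalization input \eqref{e:delocalize} to control the initial data, and apply a maximum-principle/energy argument to obtain relaxation $|f_{T,\b\la}(\b\eta)-1|\le W^{-\e}$ after time $T\ge\varphi\eta_*$. One point to flag: you invoke Theorem~\ref{thm:gap} to supply rigidity and level repulsion for the moment-flow analysis, but in \cite{BouHuaYau2016} the required eigenvalue control is obtained directly from the regularity Assumption~\ref{def:v} on $V$ (propagated in time via the free-convolution/deformed local law), not by appealing to a gap-universality theorem; relying on Theorem~\ref{thm:gap} here would risk circularity in settings where QUE is an input to universality. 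Apart from this, your high-level plan matches the argument in the cited reference.
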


\noindent We now return to the proof of Lemma \ref{QsM2}.   
Theorem \ref{c:que} implies in particular that
\be\label{butterfly}
 \sup_{  j  : |\lambda_{T,j}-E_0| < {(1-\mu)}r }  \P\left(\left| \sum_{i=1}^N \b a(i) \left( u_j^2(i, T )-N^{-1}\right) \right|>  \delta\right)\le C\ N^{-\e}/\delta^2.
\ee 
Similarly to \eqref{yujia}, we apply Theorem \ref{c:que}
 to the matrix
$$
\wt H_T=Q^{\b g}_{e} = \sqrt{q}A_{1}+V  \ {\rm where}\   V=   \sqrt {1-q} A_2+\sum_{1\leq i\leq W}g_i\b e_i\b e_i^* -
 B^*(D^{\b g}-e)^{-1} B,  
$$
with the following choices: 
$$
  T=q=N^{-1+\theta},\   \eta_*= N^{ - 1+\theta/2},\ r = N^{-1/2+\theta},\  E_0=e,
$$
 In particular, the supremum in \eqref{localque} will cover all indices $j$ such that 
$|j- (k-\cal N_D^{\b g}(e))|\leq N^\zeta$ and recall that  $u_j(i,T)=u_{j}^{\b g}(i)$ for such $j$.
Using the results from the next section,   both requirements of Assumption~\ref{def:v} hold
for our $V$, in particular 
 \eqref{e:imasup} 
 is satisfied by \eqref{Evden2} for $q=t=0$.
 Moreover  \eqref{e:delocalize} holds \nc by \eqref{Evden}.
  Hence the assumption for Theorem \ref{c:que} are verified. Therefore with  \eqref{butterfly}  we obtain that there exists $\e>0$ such that for any $\delta$, 
\begin{equation}\label{eqn:ending}
 \sup_{ \ell : |\cal C^{\b g}_{\ell }(e)-e| < {(1-\mu)}N^{-1/2+\theta} }  \P\left(\left| \sum_{i=1}^W \b a(i)  \left(  \left[ {\b u}^{\b g}_{\ell'}\left( e\right)\right]_i^2-N^{-1}\right) \right|>  \delta\right)\le CN^{\e}/\delta^2,\ \quad \ell' = \ell - \cal N_{D^{\b g}}(e).
\end{equation}
Moreover for any index  $k$ satisfying $|e-\gamma_k|\leq N^{-1+2\zeta}$ we have $|\cal C^{\b g}_k(e)-e| < (1-\mu)N^{-1/2+\theta}$. Indeed, with the rigidity property \eqref{rigidity} and the trivial
perturbation estimate $|\lambda^{\b g}_k-\lambda_k|\le \|\b g\|_\infty$, we know that 
$$
|\lambda^{\b g}_k-e|\le |\lambda^{\b g}_k-\lambda_k|+ |\lambda_k-\gamma_k|+|\gamma_k-e|\le CN^{-1+\zeta}.
$$
By definition,  $\cal C^{\b g}_k(\lambda^{\b g}_k)=\lambda^{\b g}_k$. Hence together with  \eqref{evder},  we have
$
|\cal C^{\b g}_k(e)-e|\le CN^{-1+\zeta}
$
with high probability.

 Finally, after choosing such a $k$ satisfying
  $|e-\gamma_k|\leq N^{-1+2\zeta}$, for any $j$ such that $|j-k'(e)|\leq N^\zeta$ we have $j=\ell'(e)$ for some $\ell$.
  Moreover, $|\cal C^{\b g}_\ell(e)-e|\le 
 |\cal C^{\b g}_\ell(e)-C^{\b g}_k(e)|+CN^{-1+\zeta}\le 
 CN^{-1+\zeta}$, so that we can apply (\ref{eqn:ending}). This concludes the proof of Lemma \ref{QsM2} by a simple union bound over all $j$'s such that $|j-k'(e)|\leq N^\zeta$.
\end{proof}

\section{Local law}
\label{sec:local}

The main purpose of this section is to prove  the local law of the Green's function of $H^{\b g}$, $Q^{\b g}_e$ and some variations of them  (recall the notations from Section~\ref{subsec:sketch}). As we have seen in the previous sections, 
 these local laws  are the basic inputs \nc for proving universality and QUE of these matrices.

\begin{theorem} [Local law  for $Q$ \nc]\label{local law}  Recall $\b S( e,  N; \omega)$, $\wh{\b S}( e,  N; \omega)$ and $m(z)$ defined in \eqref{eqn:omega}-\eqref{credit}.  We fix a vector ${\b g}{ \in \R^N}$ with $\|\b g\|\le N^{-1/2}$, 
numbers  $0\le t\le q\le N^{-1/2} $, a  positive  $N$-independent \nc threshold $\kappa>0$ and any energy
 $e$ with $ |e|\le 2-\kappa$.   Set  
\be\label{defqbt}
Q^{\b g}_e(t,q):=   \sqrt t \nc A_1+  \sqrt {1-q} \nc A_2  - \nc
\sum_{1\le i\le W}g_i\b e_i\b e_i^*- B^*\frac{1}{D^{\b g}-e} B.
\ee
For any (small) $\omega>0$ and  (small)  $\zeta>0$ and (large) $D$, we have  
\be\label{Evden}
\mathbb P\left(\exists z\in\b    S( e,  N; \omega)   \; s.t.\;  \max_{ij}\left|\left[Q ^{\b g}_e(t, q)-z\right]^{-1}_{ij}-m(z)\delta_{ij}\right|\ge N^\zeta\left( (N\eta)^{-1/2}+ |z-e|\right)\right)\le N^{-D},
\ee
and there exists $c>0$ such that 
\be\label{Evden2}
\mathbb P\left(\exists z\in\wh{\b S}( e,  N; \omega)   \; s.t.\;  \frac1W \im\sum_i\big[Q ^{\b g}_e(t, q)-z\big]^{-1}_{ii} \notin [c, c ^{-1}] \right)\le N^{-D}.
\ee
 Notice that \eqref{Evden2} holds in $\wh{\b S}( e,  N; \omega)$, which is larger than the set $ {\b S}( e,  N; \omega)$ 
  used in \eqref{Evden}. But instead of a precise error estimate as in \eqref{Evden},  here \eqref{Evden2} only provides a rough bound.  
 
Let $ \xi_{k}^{\b g}(e,t, q)$ be the $k$-th eigenvalue
 of $Q ^{\b g}_e(t, q)$.    Then for any (small) $\omega>0$ and (large) $D$
\be\label{Evden3}\P\left(   \exists k, \ell: 
\xi_{k}^{\b g}(e,t, q), \xi_{\ell}^{\b g}(e,t, q)  \in [e-N^{-\omega}, e+N^{-\om}],    
\; \left|\xi_{k}^{\b g}(e,t, q)-\xi_{\ell}^{\b g}(e,t, q)\right|\le \frac {|\ell-k|}{ N^{1+\omega}}-N^{-1+\omega} \right)\le N^{-D}
\ee
  Notice the minus sign in front of $-N^{-1+\omega} $ so that the right hand side of the  last inequality 
is positive only when $|k-\ell| \ge N^{  2\omega}$.

\end{theorem}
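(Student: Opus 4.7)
The plan is to derive Theorem~\ref{local law} from the known local semicircle law for the band matrix $H^{\b g}$ (Theorem~\ref{thm: with gap}) via the Schur complement, together with the uncertainty principle of Section~\ref{sec:uncertainty}, and to extend the conclusion in the parameter $t$ by a mean-field Gaussian perturbation argument.

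First I would treat the case $t = q$, where $Q_e^{\b g}(q,q)$ equals $Q_e^{\b g} = A^{\b g} - B^*(D^{\b g}-e)^{-1}B$. Applying the block Schur complement to the decomposition \eqref{H} of $H^{\b g}$ gives the identity
\begin{equation*}
[(H^{\b g} - z)^{-1}]_{ij} = \bigl[(Q_z^{\b g} - z)^{-1}\bigr]_{ij}, \qquad 1 \le i, j \le W,
\end{equation*}
where the argument of $D^{\b g}$ on the right-hand side is the same complex $z$ at which the Green function is evaluated. Since $\|\b g\|_\infty \le N^{-1/2}$, Theorem~\ref{thm: with gap} applies to $H^{\b g}$ and yields $\bigl[(Q_z^{\b g}-z)^{-1}\bigr]_{ij} - m(z)\delta_{ij} \prec (N\eta)^{-1/2}$ uniformly on $\wh{\b S}(e, N; \omega)$. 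To pass from $Q_z^{\b g}$ to $Q_e^{\b g}$, I would use the resolvent identity
\begin{equation*}
(Q_e^{\b g} - z)^{-1} = (Q_z^{\b g} - z)^{-1} + (Q_z^{\b g} - z)^{-1}\bigl[Q_z^{\b g} - Q_e^{\b g}\bigr](Q_e^{\b g} - z)^{-1},
\end{equation*}
with perturbation $Q_z^{\b g} - Q_e^{\b g} = -(e-z)B^*(D^{\b g}-e)^{-1}(D^{\b g}-z)^{-1}B$. Cauchy--Schwarz in the spectral basis of $D^{\b g}$ bounds its norm by $|e-z|\cdot\|B^*(D^{\b g}-e)^{-2}B\|^{1/2}\|B^*|D^{\b g}-z|^{-2}B\|^{1/2}$; the first factor is controlled by the uncertainty principle \eqref{brownBand2}, which gives $B^*(D^{\b g}-e)^{-2}B \lesssim 1 + (B^*(D^{\b g}-e)^{-1}B)^2$, and the second equals $\eta^{-1}\im[B^*(D^{\b g}-z)^{-1}B]$, which follows from the Schur identity $B^*(D^{\b g}-z)^{-1}B = A^{\b g} - z - (Q_z^{\b g} - z)$ and the just-established bound on $(Q_z^{\b g}-z)^{-1}$. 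Inserting these estimates produces \eqref{Evden} at $t = q$, with the linear factor $|z - e|$ in the error inherited directly from the prefactor $e - z$ in the perturbation.

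To extend in $t$, I would observe that $Q_e^{\b g}(t,q) - Q_e^{\b g}(q,q) = (\sqrt{t}-\sqrt{q})A_1$ is an additive GOE-type perturbation independent of $A_2, B, D^{\b g}$, of operator norm $O(\sqrt{q})$; continuity of the local law under such mean-field Gaussian perturbations is standard (either by a Green function comparison along the matrix Ornstein--Uhlenbeck flow driving $A_1$, or via the deformed Wigner local law for $\sqrt{t}A_1 + V$ with deterministic $V$), yielding \eqref{Evden} for all $t \in [0, q]$. The bound \eqref{Evden2} on the enlarged domain $\wh{\b S}$ follows from \eqref{Evden} at the smallest scale $\eta = N^{-1+\omega}$ (where $\im m(z) \in [c, c^{-1}]$ throughout the bulk gives both inequalities) combined with the monotonicity of $\eta \mapsto \eta\cdot (1/W)\im\tr(Q_e^{\b g}(t,q) - E - \ii\eta)^{-1}$, which propagates the bounds up to $\eta \le 1$. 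Finally, \eqref{Evden3} is an eigenvalue-counting statement: \eqref{Evden} at scale $\eta = N^{-1+\omega}$ together with a Helffer--Sj\"ostrand representation of the counting function yields $\#\{k \colon \xi_k^{\b g}(e,t,q) \le E\} = W\int_{-\infty}^E \rho(x)\,\rd x + O(N^\omega)$ uniformly for $E$ in the window, and this rigidity-type bound forces $|\xi_k - \xi_\ell| \ge (|k-\ell| - CN^\omega)/(W\rho_{\mathrm{sc}}(e))$, which implies \eqref{Evden3} after absorbing constants.

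The hardest step will be the energy transfer in the second paragraph, because an a priori bound on $\|B^*(D^{\b g}-e)^{-1}B\|$ is circular: it would follow from the very local law being proved (via $B^*(D^{\b g}-e)^{-1}B = A^{\b g} - Q_e^{\b g}$). The resolution is a bootstrap in the spectral parameter $\eta$: I would start at $\eta = N^{-\omega}$, where the naive estimate $\|(Q_e^{\b g}-z)^{-1}\| \le 1/\eta$ is already sufficient to close the perturbation argument, and then descend to the smallest scale $\eta = N^{-1+\omega}$ by iteratively improving the a priori bound at each dyadic scale. The uncertainty principle plays its essential role precisely at this smallest scale, where it is what prevents the factor $B^*(D^{\b g}-e)^{-2}B$ from blowing up and thereby breaks the circularity that would otherwise stall the induction.
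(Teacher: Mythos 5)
Your starting point --- the Schur complement identity $[(H^{\b g}-z)^{-1}]_{ij}=[(Q^{\b g}_z-z)^{-1}]_{ij}$ for $i,j\le W$, so that the band local law controls $Q^{\b g}_z$ at the complex parameter $z$ --- coincides with Step~1 of the paper's proof of Lemma~\ref{locQq}. The fatal gap is the energy-transfer step from $Q^{\b g}_z$ to $Q^{\b g}_e$. The perturbation $(e-z)B^*(D^{\b g}-e)^{-1}(D^{\b g}-z)^{-1}B$ is \emph{not} small in operator norm, and no bootstrap in $\eta$ can make it so, because the obstruction is not missing a priori information but the genuine size of the operator: if $\delta\in\sigma(D^{\b g})$ is the eigenvalue nearest to $e$, with normalized eigenvector $\bv_\delta$, then the very quantity your Cauchy--Schwarz step requires satisfies $\|B^*(D^{\b g}-e)^{-2}B\|\ge \|B^*\bv_\delta\|^2/|\delta-e|^2$, and the uncertainty principle itself prevents $\|B^*\bv_\delta\|$ from being small while $|\delta-e|\sim N^{-1}$ typically; hence $\|B^*(D^{\b g}-e)^{-2}B\|\gtrsim N^2$ and $\|B^*(D^{\b g}-e)^{-1}B\|\gtrsim N$ with probability $\Theta(1)$. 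This is not a circularity that the local law for $Q^{\b g}_e$ would resolve: that law controls the resolvent $(Q^{\b g}_e-z)^{-1}$ only for $z$ near $e$ and says nothing about $\|Q^{\b g}_e\|$, which genuinely diverges as $e$ approaches $\sigma(D^{\b g})$ --- this is exactly the pole structure of the curves $\cal C_k$ at $\sigma(D)$ in Section~\ref{sec:universality}. The paper therefore never bounds the perturbation in operator norm. It introduces the generalized resolvent $\wt G^{\b g}(z,z')=(\wt H^{\b g}-zJ-z'J')^{-1}$, proves the deterministic a priori bound $\|\wt G^{\b g}(z,z')\|\le C/\im z$ from the uncertainty principle (Lemma~\ref{lem: obG}, whose key point is that $(D-e'-B(A-z)^{-1}B^*)^{-1}$ is controlled by the absence of an approximate common null vector of $B^*$ and $D-e$, not by any bound on $B^*(D-e)^{-1}B$), and then transports the self-consistent equation continuously in $z'$ from $z'=z$ to $z'=e$, recovering the Ward identity at each step by comparing $\wt G^{\b g}(z,y_k)$ with the genuine Green function $\wt G^{\b g}(z,\wt y_k)$. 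A repaired version of your resolvent expansion, distributing the factors $(D^{\b g}-e)^{-1}B$ onto the adjacent resolvents rather than estimating the middle factor alone, is essentially the content of Lemma~\ref{lem: obG}; but even that only yields the crude bound $C/\eta$, not the entrywise law \eqref{Evden}, for which the self-consistent-equation machinery is still needed.

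A second, smaller gap concerns \eqref{Evden2} for $\eta$ up to $1$. The monotonicity of $\eta\mapsto\eta\im m_Q(E+\ii\eta)$ only gives $\im m_Q(E+\ii\eta')\ge(\eta/\eta')\,\im m_Q(E+\ii\eta)$, which is vacuous for $\eta'\gg\eta$; a genuine lower bound at scale $\eta'$ requires showing that $Q^{\b g}_e(t,q)$ has $\gtrsim W\eta'$ eigenvalues within distance $\eta'$ of $e$ for every $\eta'\le 1$, and \eqref{Evden}, being confined to $|E-e|\le N^{-\omega}$, gives no information about the spectrum of $Q^{\b g}_e$ at distance $\gg N^{-\omega}$ from $e$. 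The paper obtains these counts by transporting the rigidity of the eigenvalues of the full matrix $\wt H^{\b g}(t,q)$ along the curves $e\mapsto\cal C^{\b g}_k(e,t,q)$, whose slopes are bounded by the uncertainty principle. Your outline for \eqref{Evden3} and for the extension in $t$ (via a Gaussian comparison or the deformed local law, rather than the paper's device of absorbing $t\ne q$ into the variance condition \eqref{sijL}) is plausible.
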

 
\subsection{Local law for generalized Green's function. } 

To prove Theorem \ref{local law}, we start with a more general setting.  Let  $\wt H$ be an $N\times N$ 
 real symmetric random matrix with centered and independent  entries, up to symmetry. 
(Here we use a different notation since it is different from the $H$ of main part. 
 Moreover, this $\wt H$ is also different from the matrix defined in \eqref{deformG}.)  
Define
$$ 
\wt s_{ij}:= \E \wt H^2_{ij}, \quad { 1\le i,j\le N}.
$$ Assume that $\wt s_{ij}=\OO(N^{-1})$ and there exist $  s_{ij}$ such that for some $c>0$, 
\be\label{sijL}
\wt s_{ij}=(1+{ \OO(N^{-1/2-c})})  s_{ij}, 
\ee
 and 
 $$
s_{ij}=s_{ji}, \quad \sum_{i}s_{ij}=1.
$$
 Note that the row sums of the matrix of variances of $\wt H$ is not exactly 1 any more, so this
class of matrices $\wt H$ goes slightly beyond the concept of generalized Wigner matrices introduced in \cite{ErdYauYin2012Univ} but still remain in their perturbative regime.
A detailed analysis of the general case was given in \cite{AjaErdKru2015}.  \nc

As in \eqref{H}, we define   
\be\label{Hgdef}
\wt H^{\b g}=\wt H- \sum_i g_i \b e_i \b e_i^*, \quad 
   \wt H^{\b g} =  \begin{pmatrix} \wt A^{\b g}  & \wt B^* \cr \wt B & \wt D^{\b g} \end{pmatrix}, \quad \b g=(g_1, \dots, g_N)\in \R^N,
\ee
where $\wt A^{\b g}$ is a $W\times  W$ matrix. We define 
\be\label{defwQ}
\wt Q^{\b g}_e=\wt A^{\b g}  
 - \wt B^* \left(\wt D^{\b g}-e\right)^{-1}\wt B. 
 \ee
Clearly  $Q^{\b g}_e(t,q)$   defined in \eqref{defqbt} equals to $\wt Q^{\b g}_e(t,q)$ if we choose 
\be\label{WTYH}
    \wt H =  \wt H(t,q) =\nc \begin{pmatrix}    \sqrt t    A_1+ \sqrt {1-q} \nc A_2   &   B^* \cr  B &  D  \end{pmatrix}.
\ee
   We now prove the local law of  $\wt Q^{\b g}_e = \wt Q^{\b g}_e(t,q) \nc $  
    by going to the  large \nc matrix.   In the following everything depends on the parameters $t, q$
   but we will often omit this from the notation. \nc
   
       For any $\wt H$ 
   and complex parameters $z, z'\in \C$ \nc we define
\begin{equation}\label{band}
\wt G^{\b g } (z, z'):=\begin{pmatrix}
\wt A^{\b g}-z& \wt B^* \cr
\wt B & \wt D^{\b g}-z'
\end{pmatrix}^{-1}:= \left({ \wt H^{\b g}(z,z')}\right)^{-1}:=\left(\wt H^{\b g}-zJ-z'J'\right)^{-1}
\end{equation}
 with $J_{ij}=\delta_{ij} {\mathds{1}}(i\le W)$ and   $J'_{ij}=\delta_{ij}{\mathds{1}}(i> W)$.
  Clearly
 $$
 (\wt Q^{\b g}_e-z)^{-1}_{ij}=\wt  G^{\b g}(z, e)_{ij}, \quad 1\le i,j\le W
 $$
 Note that  $\wt G^{\b g } (z, z')$ is not a Green's function unless $z=z'$; we will  
  call it {\it generalized  Green function.}  In Lemma \ref{locQq} below we show
  that an analogue of the local law
  holds for $\wt G^{\b g } (z, z')$ in a sense that its  diagonal entries are well approximated
  by deterministic functions $M_i^{\b g}(z,z')$ and the off diagonal entries are small.
The functions $M_i^{\b g}$ are defined via a self-consistent equation in the following lemma. \nc

 \begin{lemma}\label{mI}
{ Recall $m(z)$ defined in \eqref{credit}.}  For $z\in \C$,  such that $\im z>0$, $| z|\le C$, and  $|z^2-4|\ge \kappa$, 
for some fixed  $C, \kappa>0$, we define  
 $$
A(z, \zeta):=\left\{z'\in \C: \im z'>0, \quad |z-z'|\le N^{-\zeta} \right\}\subset \C.
 $$
 For any $z'\in A(z, \zeta)$, $\|{\b g}\|_\infty \le N^{-\zeta}$, there is a unique 
solution   $M ^{\b g}_i(z, z' )$ to the equation 
\be\label{falv}
\frac1{M ^{\b g}_i(z, z')}=-(z'-z){\mathds{1}}_{i>W}-g_i-z- \sum_{j=1}^N   s_{ij}M ^{\b g}_j(z, z'), \quad { 1\le i\le N}
 \ee
with  the constraint
\be\label{falv2}
 \max_i \left|M ^{\b g}_i(z, z')-m(z)\right|=\OO(\log N)^{-1}.  
\ee  \nc
Furthermore, $ M ^{\b g}_i(z, z') $ is continuous w.r.t. to $z'$ and $\b g$, 
and it satisfies the following bound 
\be\label{falv2.5}
  \max_i \left|M ^{\b g}_i(z, z')-m(z)\right|=\OO(\log N)\left(|z-z'|+\|\b g\|_\infty \right),
\ee
in particular $M ^{\b g = 0}_i(z, z)= m(z)$.
 \end{lemma}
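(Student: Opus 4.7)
The plan is a standard perturbative construction around an explicit reference solution, via a contraction mapping. First I would verify the reference case $z' = z$, $\b g = \b 0$: I claim that the constant vector $M_i = m(z)$ solves \eqref{falv} and satisfies \eqref{falv2}. Plugging it into the right-hand side of \eqref{falv} and using $\sum_j s_{ij} = 1$ yields $-z - m(z)$, which equals $1/m(z)$ by the defining equation $m(z)^2 + z m(z) + 1 = 0$ for the Stieltjes transform of the semicircle. The constraint \eqref{falv2} is trivial here. In particular this already establishes the last assertion $M_i^{\b g = 0}(z,z) = m(z)$.

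Next, I would recast \eqref{falv} as a fixed-point problem $M = \Phi(M)$ with
\[
\Phi_i(M) \deq -\Bigl(z + g_i + (z' - z)\mathds{1}_{i > W} + \sum_j s_{ij} M_j \Bigr)^{-1},
\]
and linearize at $M^* \deq m(z) \b 1$: a direct computation gives $D\Phi(M^*) = m(z)^2 S$ where $S = (s_{ij})$. The key analytic input is the stability bound
\[
\bigl\| (1 - m(z)^2 S)^{-1} \bigr\|_{\ell^\infty \to \ell^\infty} \leq C_\kappa \log N,
\]
uniform for $z$ with $|z^2 - 4| \geq \kappa$ and $|z| \leq C$. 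This follows from the Neumann series $(1 - m^2 S)^{-1} = \sum_{k \geq 0} m^{2k} S^k$ together with $\|S\|_{\ell^\infty \to \ell^\infty} = 1$ (since $S$ is symmetric with row sums one) once one has the quantitative separation $|1 - m(z)^2 \mu| \geq c(\kappa)$ for $\mu \in \mathrm{spec}(S) \subset [-1, 1]$. For the dangerous direction $\mu = 1$ this amounts to $|1 - m(z)^2| \geq c(\kappa)$, which is immediate from the identity $1 - m(z)^2 = 2 + z m(z)$ combined with $m(z) = \tfrac{1}{2}(-z + \sqrt{z^2 - 4})$ and the bound $|z^2 - 4| \geq \kappa$; for other $\mu$ the bound is obtained by perturbing off $\mu = 1$.

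Armed with this estimate, I would run Banach's fixed-point theorem on the $\ell^\infty$-ball of radius $r = c(\log N)^{-1}$ around $M^*$. The perturbation $(z' - z) \mathds{1}_{i > W} + g_i$ has $\ell^\infty$ norm at most $|z - z'| + \|\b g\|_\infty \leq 2 N^{-\zeta}$, which is much smaller than the radius $r$; the nonlinear remainder in $\Phi$ is quadratic in $\|M - M^*\|_\infty$ and hence negligible once the stability bound is paid. This yields existence and uniqueness of $M^{\b g}(z, z')$ within the constraint \eqref{falv2}, together with the quantitative bound \eqref{falv2.5} by tracking the source term through $(1 - m^2 S)^{-1}$. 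Continuity in $(z', \b g)$ follows from uniqueness and the implicit function theorem applied to the jointly smooth map $\Phi$. The main obstacle is the $\ell^\infty$ stability of $1 - m(z)^2 S$: in the band regime the second eigenvalue of $S$ is only $1 - O(W^{-2})$, so one cannot rely on a naive spectral gap, but the complex phase of $m(z)^2$ (which lies away from the real axis in the bulk) keeps $1 - m(z)^2 \mu$ away from zero uniformly in $\mu \in \mathrm{spec}(S)$, and the statement's generous $\log N$ slack absorbs any logarithmic loss incurred in converting the spectral bound into an $\ell^\infty$ bound.
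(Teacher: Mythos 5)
Your overall architecture is the right one and is essentially what the paper relies on: verify that $M_i\equiv m(z)$ solves the unperturbed equation \eqref{unpert}, linearize the fixed-point map at this solution to get $1-m(z)^2S$, and run a contraction argument on an $\ell^\infty$-ball of radius $c(\log N)^{-1}$, with the single nontrivial input being the stability bound \eqref{Sbound} on $\|(1-m^2S)^{-1}\|_{\ell^\infty\to\ell^\infty}$. (The paper does not reprove this; it quotes Lemma 4.4 of \cite{AjaErdKru2015/2} and the bound \eqref{Sbound} from \cite{ErdYauYin2012Univ}.)

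However, your justification of that key input contains a genuine error. You claim that $|1-m(z)^2\mu|\ge c(\kappa)$ uniformly for $\mu\in\mathrm{spec}(S)\subset[-1,1]$, arguing that the only dangerous direction is $\mu=1$ and that elsewhere ``the complex phase of $m(z)^2$'' saves you. This is false near the center of the spectrum: from $m^2+zm+1=0$ one gets $1+m(z)^2=-z\,m(z)$, so for $z\to 0$ (which is allowed, since $|z^2-4|\ge\kappa$ does not exclude $z$ near $0$) one has $m(z)^2\to -1$ and $1-m(z)^2\mu\to 1+\mu$, which vanishes as $\mu\to -1$. So the genuinely dangerous direction is $\mu=-1$, and no phase argument rescues it; what is actually needed is a spectral gap of $S$ \emph{above} $-1$, which holds for the band profile \eqref{eqn:band2} (the symbol of the circulant $S$ is a normalized Dirichlet kernel, bounded below by about $-0.22$) and is exactly the content of Lemma A.1 of \cite{ErdYauYin2012Univ} that the paper invokes. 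Your explicit disclaimer that ``one cannot rely on a naive spectral gap'' points in the wrong direction: the gap at $1$ is indeed tiny (of order $W^{-2}$) but is handled by the phase/$|1-m^2|\ge c$ computation you give, whereas the gap at $-1$ is macroscopic and is indispensable. Two smaller issues: the Neumann series $\sum_k m^{2k}S^k$ is not absolutely convergent in the regime of interest, since $|m(z)|\to 1$ as $\im z\downarrow 0$ in the bulk while $\|S\|_{\ell^\infty\to\ell^\infty}=1$, so it cannot by itself yield \eqref{Sbound}; and the passage from the $\ell^2$ resolvent bound to the $\ell^\infty$ bound with only a $\log N$ loss is precisely where the work lies (truncating the Neumann series at order $\log N$ and controlling the remainder), so it should not be dismissed as slack that is ``absorbed.'' With the $\mu=-1$ gap supplied and the $\ell^2\to\ell^\infty$ conversion done properly, the rest of your contraction argument goes through.
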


 This theorem in a very  general setup (without the restriction \eqref{sijL}) was proved in Lemma 4.4 of \cite{AjaErdKru2015/2}. In particular, it
showed the existence, uniqueness and stability for any small additive perturbation 
of the equation 
\be\label{unpert}
  \frac{1}{M_i} = -z - \sum_{j} s_{ij}M_j.
\ee
which has a unique solution $M_i=m(z)$ in the upper half plane.
In other words, the solution $\b M(\b d)$  of the perturbed equation
\be\label{pert}
  \frac{1}{M_i(\b d)} = -z - d_i - \sum_{j} s_{ij}M_j(\b d)
\ee
depends analytically on the vector $\b d$ for $\| \b d\|\leq c/\log N$.
(Thanks to \eqref{sijL}, here we need only the special case when the perturbation is around the semicircle, $M_i=m$,
this result was essentially contained in \cite{ErdYauYin2012Univ} although not stated explicitly.)
The necessary input  is a  bound on the norm 
\be\label{Sbound}
   \Big\| \frac{1}{1- m^2(z) S}\Big\|_{\ell^\infty\to\ell^\infty} \le C_\e\log N, \qquad \mbox{for} \; |z^2-4|\ge \e,
\ee
that was first  proven in \cite{ErdYauYin2012Univ}, see also part (ii) of Proposition A.2 in \cite{ErdKnoYauYin2013}.
The bound  \eqref{Sbound} requires a spectral gap above $-1$ in the spectrum of $S$ which is guaranteed by
Lemma A.1 from \cite{ErdYauYin2012Univ} under the condition \eqref{eqn:band2}. 
In fact, for our band matrices the $\log N$ factor in \eqref{Sbound} can be removed, see Lemma 2.11 in
\cite{AjaErdKru2015/2}.

\medskip

 \begin{lemma} \label{locQq}  Recall  $\wt G^{\b g}(z,z')$, the generalized Green's function 
 of $\wt H$ from \eqref{band}.  Let $\Omega$ be the subset of the probability space \nc
  such that  for any  two complex numbers 
$y,y'\in \C$ satisfying   $0\le \im y'\le \im y$ and $|y|$, $|y'|\le 3$,  we have  
\be\label{zuic}
\|\wt G^{\b g }(y, y')\|\le C(\im y)^{-1}.
\ee
 Suppose that $\P(\Omega )\ge 1-N^{-D}$ for any fixed $D>0$. 
Assume that  $\b g$,  $z$ and $z'$  satisfy 
$$
 \|{\b g}\|_\infty \le N^{-1/2}, \quad \quad |z^2-4|\ge\kappa, \quad N^{-1+\zeta}\le \im z\le \zeta^{-1} , \quad \quad \zeta, \kappa>0
$$
and
$$
|z-z'|\le N^{-\zeta},\quad \quad  0\le \im z'\le \im z.  
$$
Then for any small $\e>0$, we have 
 \be\label{sYY}
\max_{ij}\left| \wt G^{\b g }_{ij}( z, z')-M^{\b g}_i(z,z' )\delta_{ij} \right|  \le  (N\eta)^{-1/2} N^\e,
\qquad \eta =\im z, 
 \ee
 holds with probability { greater than }$1-N^{-D}$ for any fixed $D>0$. 
 \end{lemma}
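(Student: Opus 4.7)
The plan is to adapt the standard self-consistent equation method to the generalized (non-self-adjoint) Green's function $\wt G^{\b g}(z,z')$, with Lemma \ref{mI} supplying stability of the deformed equation \eqref{falv} and the hypothesis \eqref{zuic} replacing the trivial $\|G\|\leq 1/\eta$ bound that is available when $z=z'$. The basic Ward-type identity here comes from $(\wt H^{\b g}(z,z'))^*=\wt H^{\b g}(\bar z,\bar z')$: one obtains $\im\wt G^{\b g}=\wt G^{\b g}(\eta J+\eta' J')(\wt G^{\b g})^*$ with $\eta=\im z$, $\eta'=\im z'$, and taking the $(i,i)$ entry yields
\[
\eta\sum_{j\leq W}\big|\wt G^{\b g}_{ij}\big|^2+\eta'\sum_{j>W}\big|\wt G^{\b g}_{ij}\big|^2=(\im\wt G^{\b g})_{ii}.
\]
Combined with \eqref{zuic} this controls the column sums needed for the quadratic-form large deviations below.

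Next, Schur's complement formula at each $i$, with $z_i:=z\mathds{1}_{i\leq W}+z'\mathds{1}_{i>W}$, gives
\[
\frac{1}{\wt G^{\b g}_{ii}}=\wt H_{ii}-g_i-z_i-\sum_{j,k\neq i}\wt H_{ij}\,(\wt G^{(i)})_{jk}\,\wt H_{ki},
\]
where $\wt G^{(i)}$ is the $(i)$-th minor. Standard concentration for quadratic forms in independent centered variables, together with $|\wt H_{ii}|\prec N^{-1/2}$ and the swap $\wt s_{ij}\to s_{ij}$ permitted by \eqref{sijL} (at cost $O(N^{-1/2-c})$), reduces this to
\[
\frac{1}{\wt G^{\b g}_{ii}}=-z_i-g_i-\sum_j s_{ij}\wt G^{\b g}_{jj}+\mathcal{E}_i,
\]
after absorbing the minor correction $(\wt G^{(i)})_{jj}-\wt G^{\b g}_{jj}=\wt G^{\b g}_{ij}\wt G^{\b g}_{ji}/\wt G^{\b g}_{ii}$. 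This is a perturbation of \eqref{falv}, so the stability statement of Lemma \ref{mI} (which rests on \eqref{Sbound}) delivers $|\wt G^{\b g}_{ii}-M^{\b g}_i(z,z')|\prec\|\mathcal{E}\|_\infty$. A continuity bootstrap in $\eta$, starting at $\eta\sim 1$ where \eqref{zuic} makes the target estimate trivial and $M^{\b g}_i\approx m(z)$ via \eqref{falv2.5}, then extends the bound down to $\eta=N^{-1+\zeta}$; the off-diagonal entries are handled by the standard second-order Schur expansion with the same large-deviation inputs.

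The delicate step is showing $(\sum_{j,k}s_{ij}s_{ik}|\wt G^{(i)}_{jk}|^2)^{1/2}\prec(N\eta)^{-1/2}$. In the self-adjoint case this follows at once from $\sum_k|G_{jk}|^2=\im G_{jj}/\eta$, but in the present setting the weighted Ward identity only controls $\sum_{j\leq W}|\wt G_{ij}|^2$ by $\im\wt G_{ii}/\eta$, while $\sum_{j>W}|\wt G_{ij}|^2$ is a priori bounded only by the crude $C/\eta^2$ from \eqref{zuic} when $\eta'=0$. Closing this gap is the heart of the argument: one splits the double sum according to the block containing each summation index, uses $s_{ij}\leq C/N$ to gain a factor of $N$, and upgrades the $\|\wt G\|^2$ bound to the finer scale $(\im m)/\eta$ inherited iteratively from the bootstrap $\wt G_{jj}\approx M^{\b g}_j\approx m(z)$. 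Once this is set up, $\mathcal{E}_i$ satisfies the optimal $(N\eta)^{-1/2}$ bound uniformly in the allowed range of $z'$, and Lemma \ref{mI} closes the proof.
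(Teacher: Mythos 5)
Your proposal correctly identifies the architecture (Schur complement, large deviations, stability of \eqref{falv} via Lemma \ref{mI}) and, to your credit, you isolate exactly the right obstruction: with $\im z'=0$ the identity $\im\wt G^{\b g}=\wt G^{\b g}(\eta J+\eta' J')(\wt G^{\b g})^*$ gives no control of $\sum_{j>W}|\wt G^{\b g}_{ij}|^2$, so the Hilbert--Schmidt norm entering the fluctuation term is not bounded by $\eta^{-1}\im\tr\wt G^{\b g}$. But your proposed resolution of this obstruction does not work. Knowing $\wt G^{\b g}_{jj}\approx M^{\b g}_j\approx m(z)$ for all $j$ says nothing about $\sum_{j>W}|\wt G^{\b g}_{ij}|^2=(\wt G^{\b g}J'(\wt G^{\b g})^*)_{ii}$: the link between row $\ell^2$-norms and diagonal imaginary parts is precisely the Ward identity, which is exactly what fails here. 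Splitting the double sum by blocks and using $s_{ij}\le C/N$ only converts the problem into bounding $N^{-1}\|\wt G^{\b g}\|_{HS}^2$, and the a priori input \eqref{zuic} gives $\|\wt G^{\b g}\|_{HS}^2\le N\eta^{-2}$, which produces an error of size $\eta^{-1}$ in the self-consistent equation --- useless for $\eta\ll 1$. So the "heart of the argument" is asserted, not supplied.

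The paper's mechanism is different and is the genuinely new idea of this section. Instead of a bootstrap in $\eta$, one first proves the case $z'=z$ (a bona fide generalized Wigner local law) and then interpolates in $z'$ along a discretized path $y_0=z\to y_{2N^4}=z'$ at fixed $\eta=\im z$, running an induction on the path index $k$. At step $k$ one writes the resolvent expansion
\be
\wt G^{\b g}(z,y_k)=\wt G^{\b g}(z,\wt y_k)+\wt G^{\b g}(z,y_k)\,\ii(\eta-\im y_k)J'\,\wt G^{\b g}(z,\wt y_k),\qquad \wt y_k:=y_k+\ii(\eta-\im y_k),
\ee
where $\wt y_k$ has imaginary part exactly $\eta$, so that $\wt G^{\b g}(z,\wt y_k)$ \emph{is} the resolvent of a self-adjoint matrix and the Ward identity $\|\wt G^{\b g}(z,\wt y_k)\|_{HS}^2=\eta^{-1}\im\tr\wt G^{\b g}(z,\wt y_k)$ applies to it; moreover $\wt y_k$ is an earlier point on the interpolation path, so the induction hypothesis bounds $\im\tr\wt G^{\b g}(z,\wt y_k)$ by $CN$. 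The correction term is then absorbed using the operator-norm hypothesis \eqref{zuic}, yielding $\|\wt G^{\b g}(z,y_k)\|_{HS}\le C(N/\eta)^{1/2}$ and closing the self-consistent equation at the optimal scale. Without this detour through the lifted parameter $\wt y_k$ (or an equivalent device), your argument cannot reach an error of order $(N\eta)^{-1/2}$, so the proof as written has a gap at its central step.
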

  
 Note that both the condition \eqref{zuic}
and the estimate \eqref{sYY} are uniform in $\im y'$ and $\im z'$, respectively, in particular
\eqref{sYY} holds even if $z'$ is on the real axis. This is formulated more explicitly in 
the following:

\begin{corollary}
\label{locQq2}  In the setting  of  Lemma \ref{locQq}, we  assume
 $ \|{\b g}\|_\infty \le N^{-1/2}$ and pick  an $e\in \R$ with  $|e|\le 2-\kappa$ for some $\kappa>0$.   \nc
 Then we have
 \be\label{sYYA}
\max_{1\le i, j\le W}\; \sup_{ N^{-1+\zeta}\le \im z\le N^{- \zeta}} \; \sup_{E:|E-e|\le N^{-\zeta}}\left| \wt G^{\b g }_{ij}( z, e)-M^{\b g}_i(z,e)\delta_{ij} \right|   \le (N\eta)^{-1/2}N^{\e}, \quad z=E+i \eta  
 \ee 
  holds with probability $1-N^{-D}$ for any fixed $D>0$  and $\e,\zeta>0$. \nc
\end{corollary}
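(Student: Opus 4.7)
\medskip

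\noindent\emph{Proof plan.} The strategy is a standard grid argument: Lemma~\ref{locQq} already supplies the estimate for any \emph{fixed} pair $(z, z')$ with $z' = E$ real (the hypothesis $0 \le \im z' \le \im z$ explicitly permits real $z'$), so to upgrade to a bound that is uniform in $(E, z)$ it suffices to discretize, apply the pointwise statement and a union bound on a polynomial grid, and then interpolate using Lipschitz continuity of both sides of \eqref{sYY} in $(z, z')$.

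\smallskip

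\noindent\textbf{Step 1 (grid + union bound).} Fix $\wt\zeta := \zeta/3$ and a large integer $K$ to be chosen. Consider the lattice
\begin{equation*}
\mathcal{G} := \hb{(z_0, E_0)\,:\, z_0 \in N^{-K}(\Z + \ii\,\Z), \ E_0 \in N^{-K}\Z,\ |z_0 - e|, |E_0 - e| \le N^{-\wt\zeta},\ N^{-1+\wt\zeta} \le \im z_0 \le N^{-\wt\zeta}}.
\end{equation*}
For each $(z_0, E_0) \in \mathcal{G}$ the pair $(z, z') = (z_0, E_0)$ satisfies the assumptions of Lemma~\ref{locQq}: $\|\b g\|_\infty \le N^{-1/2}$, $|z_0^2 - 4| \ge \kappa'$ for some $\kappa' > 0$ by the bulk assumption $|e| \le 2 - \kappa$ (shrinking $\wt\zeta$ if necessary), $N^{-1+\zeta} \le \im z_0$, and $|z_0 - E_0| \le 3 N^{-\wt\zeta} \le N^{-\zeta}$. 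Applying Lemma~\ref{locQq} with $\e$ replaced by $\e/2$, and taking a union bound over $|\mathcal{G}| \le N^{3K+1}$ grid points, we obtain that
\begin{equation*}
\max_{1 \le i,j \le W} \max_{(z_0, E_0) \in \mathcal{G}} \absb{\wt G^{\b g}_{ij}(z_0, E_0) - M^{\b g}_i(z_0, E_0)\delta_{ij}} \le (N \im z_0)^{-1/2} N^{\e/2}
\end{equation*}
with probability at least $1 - N^{-D}$, upon choosing the threshold $D'$ in the pointwise estimate as $D' = D + 3K + 2$.

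\smallskip

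\noindent\textbf{Step 2 (continuity of $\wt G$).} On the event $\Omega$ where $\|\wt G^{\b g}(y,y')\| \le C/\im y$, the resolvent identity for the augmented matrix $\wt H^{\b g}(z, z') = \wt H^{\b g} - z J - z' J'$ gives
\begin{equation*}
\partial_z \wt G^{\b g}(z, z') = \wt G^{\b g}(z, z') J \wt G^{\b g}(z, z'), \qquad \partial_{z'} \wt G^{\b g}(z, z') = \wt G^{\b g}(z, z') J' \wt G^{\b g}(z, z'),
\end{equation*}
so each matrix element of $\wt G^{\b g}$ is Lipschitz in $(z, z')$ with constant at most $C \|\wt G^{\b g}\|^2 \le C N^{2}$ on $\Omega$. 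Moving from an arbitrary $(z, E)$ in the domain of \eqref{sYYA} to the nearest point of $\mathcal{G}$ therefore produces an error of at most $C N^{-K + 2}$, which is negligible once $K$ is chosen large enough compared to the $(N\eta)^{-1/2}$ target.

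\smallskip

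\noindent\textbf{Step 3 (continuity of $M^{\b g}$).} Differentiating the self-consistent equation \eqref{falv} with respect to $z'$ yields a linear system for $\partial_{z'} M^{\b g}_i$ whose matrix is $1 - m^2(z) S + o(1)$ in view of \eqref{falv2} and \eqref{sijL}. The stability bound \eqref{Sbound} (Proposition A.2 of \cite{ErdKnoYauYin2013}, or its refinement in \cite{AjaErdKru2015/2}) then gives $\max_i |\partial_{z'} M^{\b g}_i(z, z')| \le C \log N$ uniformly in the region under consideration, and analogously for $\partial_z$. Hence $M^{\b g}_i(z, z')$ is Lipschitz with constant $O(\log N)$, and moving to the grid adds an error $O(N^{-K} \log N)$, again negligible.

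\smallskip

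Combining the three steps and absorbing the $\log N$ and $N^{\e/2}$ factors into $N^\e$ gives the uniform estimate \eqref{sYYA}. The only real input is the pointwise estimate of Lemma~\ref{locQq}; the grid/continuity argument is essentially bookkeeping. The mildly non-trivial point is checking that the linearization of \eqref{falv} is invertible with polynomial norm even when $z'$ approaches the real axis, but this is exactly what \eqref{Sbound} provides since the linearization depends only on $z$ (through $m(z)$ with $\im z \ge N^{-1+\zeta}$), not on $z'$.
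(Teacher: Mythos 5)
Your proposal is correct and follows essentially the same route as the paper: the paper also proves the corollary by taking a polynomially fine grid (an $N^{-10}$-grid) in the two parameters, applying Lemma \ref{locQq} with a union bound at each grid point, and interpolating via the bound $|\partial_z \wt G^{\b g}_{ij}|, |\partial_e \wt G^{\b g}_{ij}| \le \|\wt G^{\b g}\|^2 \le N^2$ on $\Omega$ together with the stability of the self-consistent equation \eqref{falv} for $M^{\b g}_i$. Your Steps 1--3 are just a more explicit write-up of that argument.
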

\nc
\begin{proof}[Proof]  From Lemma \ref{locQq}, we know \eqref{sYYA} holds for fixed $z$ and $e$. Hence we only need to prove that that they hold at same time for all $z=E+i\eta : $
$|E-e|\le N^{-\zeta}$ and $ N^{-1+\zeta}\le \eta\le N^{- \zeta}$. We choose an $N^{-10}$-grid in both parameter spaces
so the validity of \eqref{sYYA} can be simultaneously guaranteed for each element of this net. Since  in $\Omega$ we have
$\left|\partial_z \wt G_{ij}^{\,\b g}\right|\le \|\wt G^{\b g }\|^2\le \eta^{-2}\le N^2$,
 and the same bound holds for $\partial_e \wt G_{ij}^{\, \b g}$,  we can approximate $\wt G_{ij}^{\,\b g}(z,e)$
at a nearby grid point with very high accuracy. The same argument holds for $M^{\b g}_i(z,e)$ by the stability
of its defining equation. This proves Corollary~\ref{locQq2}.
\end{proof}

\begin{proof}[Proof of Lemma \ref{locQq}]   For the proof we proceed in three steps.\\

{\it Step 1:  We first consider the case  $z=z'$.}   By \eqref{falv2.5}, we only need to prove that for any small $\e>0$
\be\label{DYA}
 \max_{ij}\left| \wt G^{\b g }_{ij}( z,  z \nc)-m(z  )\delta_{ij} \right|\le     (N\eta)^{-1/2}N^{\e}  
\ee
holds with probability  { greater than } $1-N^{-D}$. To prove this estimate,  we claim that
there exists a set $\Xi$ so that  
$\P(\Xi)\ge 1-N^{-D}$ for any $D>0$, { and  in $\Xi$}
$$
\max_{ij}| \wt G^{\b g }_{ij} -m(z)\delta _{ij}|\le (\log N)^{-1}.
$$
 Furthermore an approximate \nc self-consistent equation for $\wt G^{\b g }_{ii}$ holds in $\Xi$;
 more precisely, we have 
$$
{\mathds{1}}_\Xi \left|\left(\wt G^{\b g }_{ii}\right)^{-1}-\wt H_{ii}-g_i+z+\sum_{j}\wt s_{ij}\wt G^{\b g }_{jj}\right|\le (N\eta)^{-1/2}N^{\e}  .
$$
 These facts were  shown in   \cite{ErdYauYin2012Rig} with $\b g=0$ and the same argument holds
to the letter including  
a small perturbation $\b g$. \nc
Since by assumption $\|\b g\|_\infty\le N^{-1/2}$, and $\wt s_{ij}=(1+{ \OO(N^{-1/2-c})})  s_{ij}$
   we   obtain 
$$
{\mathds{1}}_\Xi \left|\left(\wt G^{\b g }_{ii}\right)^{-1}-\wt H_{ii}+z+\sum_{j}s_{ij}\wt G^{\b   g  }_{jj}\right|\le (N\eta)^{-1/2}N^{\e}. 
$$
 Since $|\wt H_{ii}|\le N^{-1/2+\e}$ with very high probability, 
using the stability of the unperturbed self-consistent equation
as in \cite{ErdYauYin2012Rig}, \nc we obtain \eqref{DYA}.

   \medskip
  
{\it Step 2:  Proof of  \eqref{sYY} for  $z'\ne z$.}   Clearly the $\wt G^{\b g }(z,z')-\wt G^{\b g }(z,z)$ is a continuous function w.r.t.  $z'$ and it equals to zero at $z=z'$.  We define the following  interpolation  between 
$y(0)=z$ to  $y(1)=\re z'+\ii\im z$ and then to  $y(2)=z'$: 
\begin{eqnarray*}
y(s)=
\begin{cases}
(1-s) \re z+s\re z'+\ii \im z   & 0\le s\le 1 \\
\re z'+(2-s)\ii \im z + (s-1) \ii \im z' & 1\le s\le 2. \\
\end{cases}
\end{eqnarray*}
Denote by  $s_k=kN^{-4}$ and $y_k=y(s_k)$,  and our goal is to  prove that 
\eqref{sYY} holds for $z'=y_{k}$ for  $k=2N^4$.  We have proved in Step 1 that  \eqref{sYY} holds for $z'=y_{k=0}$ and we now  apply  induction.    For any fixed $ \alpha <1/2$, we define the event  $\Xi^{(\al)}_k \subset\nc \Omega$
as
$$
\Xi^{(\al)}_k := \Omega \cap \Big \{  \max_{ij} \left| \wt G^{\b g }_{ij}( z, y_k)-M^{\b g }_i(z,y_k)\delta_{ij} \right|  
\le  (N\eta)^{-\al} \Big \} .
$$
Now we claim  that,   for any $1\le k\le 2N^4$,   any small  ${ \e>0}$  and any large  $D$,  we have  
 \be\label{Ygi}
\mathbb P\Big\{ \Big(\bigcap_{\ell\le k}\Xi^{(1/4)}_{ \ell \nc}\Big)   \setminus \Xi^{  (1/2- \e)}_{k} \Big\}\le N^{-D}.
\ee
Assuming this estimate is proved, we continue to prove  \eqref{sYY}. 
Recall the bound  
\be\label{xi01}
\P(\Xi_0^{(1/2 - \e)})\ge 1-N^{-D}
\ee from Step 1. 
Simple calculus  and \eqref{zuic} yield  that  
$$
   | \nc \partial_{z'} \wt G_{ij}^{\,\b g}  |\nc  \le \|\wt G^{\b g }\|^2\le N^2
$$
holds in the set $\Omega$. Hence we can estimate  the difference between $\wt G^{\b g }_{ij}( z, y_{k+1})$ and 
$\wt G^{\b g }_{ij}( z, y_{k})$ by $N^{-2}$. Similar estimate holds between $M^{\b g }_i(z,y_k)$ 
and $M^{\b g }_i(z,y_{k+1})$ by the stability of the self-consistent equation \eqref{falv}
at the parameter $(z, y_k)$, provided by  Lemma 4.4 of \cite{AjaErdKru2015/2}.

  \nc   These bounds easily  imply that 
 \be\label{Ygi2}
 \mathbb P\left( \;\Xi^{(  1/2- \e)}_{k} \setminus  \Xi^{(1/4)}_{k+1}\right)\le N^{-D}.
\ee
It is clear that { the initial bound \eqref{xi01} and } the two estimates \eqref{Ygi} and \eqref{Ygi2} allow us to use induction to conclude  $\P(\Xi^{(1/2-\e)}_{k})\ge  1- \nc N^{-D}$ for any $1\le k\le 2N^4$.  We have thus proved  \eqref{sYY} assuming  \eqref{Ygi}.

 \medskip
 
{\it Step 3. Proof of  \eqref{Ygi}.} 
 Recall $\wt G^{\b g }$ is defined with { $ H^{\b g} $}  in \eqref{band}. We define { $ H^{\b g, {(i)}}(z,z')$} as the matrix obtained by removing the $i$-th row and column of  { $ H^{\b g }(z,z')$} and set
 $$
 \wt G^{\b g, {(i)}}(z,z') : =\left(H^{\b g, {(i)}}(z,z')\right)^{-1}.
 $$
  As in \cite{ErdYauYin2012Rig}, the standard large deviation argument implies    that for any $\e>0$,  in $\Xi^{1/4}_{k }$,  
$$
\frac1{\wt G_{ii}^{\,\b g}(z,y_k)}= -(y_k-z) {\mathds{1}}\nc (i>W)-g_i-z- \sum_{ij} \wt s_{ij}\left(\wt G^{\b g, {(i)} }(z,y_k)\right)_{jj}+O  \left(N^{-1+\e}\|\wt G^{\b g, {(i)} }(z,y_k) \|_{HS}  \right)
$$
holds with probability $1-\OO(N^{-D})$, where $\| \,\cdot\, \|_{HS}$ is the Hilbert-Schmidt norm.  The  { matrix entries of}  $\wt G^{\b g, {(i)} }$ 
can be replaced by $\wt G^{\b g }$ by using the identity { (see \cite[Lemma 4.2]{ErdYauYin2012Univ})}
\be\label{Gijk}
(\wt G^{\b g , {(\ell)} }) _{ij}=\wt G^{\b g }_{ij}-
\frac{\wt G^{\b g }_{i\ell}\wt G^{\b g }_{\ell i}}{\wt G^{\b g }_{\ell\ell}}, 
\qquad  \ell \ne i, j,   \nc
\ee
 and using that both off-diagonal matrix elements are bounded by $(N\eta)^{-1/4}$ on $\Xi_k^{(1/4)}$. \nc
Together with \eqref{sijL}, we have obtained the self-consistent equation 
\be\label{sG}
\frac1{\wt G_{ii}^{\,\b g}(z,y_k)}=  -(y_k-z) {\mathds{1}}\nc(i>W)-g_i-z- \sum_{j}  s_{ij} \wt G_{jj}^{\,\b g}(z, y_k) +O  \left(N^{-1+\e}\|\wt G^{\b g }(z,y_k) \|_{HS} +(N\eta)^{-1/2} \right),
\ee
{ which holds with probability larger than $1-\OO(N^{-D})$. } The standard argument then uses the so-called Ward identity that the Green function $G=(H-z)^{-1}$  of any self-adjoint matrix  $H$
satisfies that 
\be\label{ward}
\| G (z)\|_{HS}^2 = \eta^{-1} \im \tr G(z), \qquad \eta =\im z.
\ee
In our case, $\wt G^{\b g }$ is not a Green function and this presents the major difficulty. The main idea is to write 
$$
\wt G^{\b g }(z,y_k)= \wt G^{\b g }(z,\wt y_k) + \wt G ^{\,\b g} (z,y _k)  \ii (\eta-\im y_k)J  \wt G^{\b g } (z, \wt y_k) , \quad \wt y_k=y_k+\ii (\eta-\im y_k),
$$
where $J$ is the matrix defined by  $J_{ij}=1_{1 \le i\le  W} {\delta_{ij}} $ and  the imaginary part of $\wt y_k$
 equals \nc $\eta = \im z$.  In particular, $\wt G^{\b g }(z,\wt y_k)$ is a Green function
of a self-adjoint matrix, hence the Ward identity is applicable. \nc
 By definition,  $\wt y_k\in  \{y_\ell \, : \; \ell \le k\}$.  \nc
Hence in the set $\bigcap_{\ell\le k}\Xi^{(1/4)}_{ \ell \nc}
\subset \Omega $,   we have 
\begin{align} \label{YLL}
 \|\wt G^{\b g }(z,y_k)\|_{HS} & \le \|\wt G^{\b g }(z,\wt y_k)\|_{HS}
+ \| \wt G ^{\,\b g} (z,y _k)  \ii (\eta-\im y_k)J  \wt G^{\b g } (z,  \wt y_k) \|_{HS}  \\
& \le \|\wt G^{\b g }(z,\wt y_k)\|_{HS}
+ \eta  \| \wt G ^{\,\b g} (z,y _k)\|    \|  \wt G^{\b g } (z,  \wt y_k) \|_{HS}  \le C
\Big[\eta^{-1} \im \tr \wt G^{\b g }(z,\wt y_k) \Big]^{ 1/2\nc},
\end{align}  \nc
where  we have used the Ward identity \eqref{ward} { for $\wt G^{\b g } (z,  \wt y_k)$} and  \eqref{zuic} { for $\wt G^{\b g } (z,   y_k)$} .  
Inserting this bound into \eqref{sG}, we have   that in $\bigcap_{\ell\le k}\Xi^{(1/4)}_{ \ell \nc} $ with probability $1-\OO(N^{-D})$ that { for any $\e>0$, }
$$
\frac1{\wt G_{ii}^{\,\b g}(z,y_k)}=  -(y_k-z) {\mathds{1}}\nc(i>W)-g_i-z- \sum_{ij}  s_{ij} \wt G_{jj}^{\,\b g}(z, y_k) +O   \left((N\eta)^{-1/2} N^{\e}\right).
$$
  Now we compare this equation with  \eqref{falv} and notice that both are 
perturbations of the  equation \eqref{unpert} that is stable in an $O( (\log N)^{-1})$ neighborhood
of the vector $\b m$. We obtain that 
 in $\bigcap_{\ell\le k}\Xi^{(1/4)}_{ \ell \nc} $  with probability $1-\OO(N^{-D})$
$$\max_i\left| \wt G_{ii}^{\,\b g}(z, y_k) -M^{\,\b g}_i(z, y_k)\right|=O   \left((N\eta)^{-1/2} N^{\e}\right).
$$  
 
For the  off-diagonal terms i.e., $\wt G_{ij}^{\,\b g}(z, y_k)$,  { similarly} as in  \cite{ErdYauYin2012Univ}, we know that  in $\Xi_k^{(1/4)}$ with probability $1-\OO(N^{-D})$
$$
\left|\wt G_{ij}^{\,\b g}(z, y_k)\right|\le \left|\wt G_{ij}^{\,\b g}(z, y_k)\right|\left|\wt G_{jj}^{\,\b g, {(i)}}(z, y_k)\right| \left(N^{-1/2+\e}+N^{-1+\e}\|(\wt G^{\b g, {(ij)} }(z,y_k)) \|_{HS} \right)
$$
Then with \eqref{YLL} and \eqref{Gijk}, we obtain that in {   $\bigcap_{\ell\le k}\Xi^{(1/4)}_{ \ell \nc}$} with probability $1-\OO(N^{-D})$
$$
\left|\wt G_{ij}^{\,\b g}(z, y_k)\right|=O   \left((N\eta)^{-1/2} N^{\e}\right) .
$$
This completes  the proof of \eqref{Ygi} and Lemma  \ref{locQq}.
\end{proof}

\subsection{Operator bound of $G(z,z')$. } 
As explained in the beginning of this section, we are going to prove Theorem \ref{local law} with Corollary~\ref{locQq2}.  For this purpose, we need to prove that band matrix satisfies the assumption \eqref{zuic}.  In this subsection, we prove a sufficient condition for \eqref{zuic}. 
 We formulate the result  in a non-random setup and later we will check that 
the conditions hold with very high probability in case of our random band matrix. \nc
 
 \begin{lemma}\label{lem: obG}
 Let $H$ be a   (non random)   symmetric $N\times N$ matrix  and consider its block decomposition as \nc
 $$
H=  \begin{pmatrix} A  & B^* \cr B & D \end{pmatrix}.
 $$
 Suppose that for (small) $\mu >0$ and $C_0$, the following holds:
\begin{enumerate}[(i)]
\item there does not exist $e \in \bR, \;\;  \b u\in  \R^N$ such that
\be\label{hlan}
   \|\bu\|=1, \;  \| B^*\bu\|\le \mu, \; \| ( D-e) \bu\|\le   \mu.
\ee
\item The submatrices are  bounded:
\be\label{apriori3}\|A\|+\|B\|+\|D\| \le C_0.
\ee
\end{enumerate}
  Define
$$
G(z,z'):= \begin{pmatrix} A-z  & B^* \cr B & D-z' \end{pmatrix}^{-1}.
$$ 
Then for any large $C''>0$, there exists $C'>0$,  depending only on $C''$, $\mu$ and $C_0$, \nc such that  if 
$$
z,z'\in \C, \quad 0\le\im z'\le \im z, \quad |z|+|z'|\le C'',
$$
then we have 
\be
\label{xHUI}
 \|G(z,z')\|   \le \nc   \frac{C'}{\im z}.
 \ee

 \end{lemma}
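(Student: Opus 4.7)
The plan is to establish, for any unit vector $v\in\C^N$, the uniform lower bound $\|G^{-1}(z,z')v\|\geq c(\mu,C_0,C'')\,\eta\|v\|$, from which invertibility and the desired estimate $\|G(z,z')\|\leq C'/\eta$ follow immediately. Decompose $v=(v_1,v_2)^T$ according to the block structure of $H$, so that $v_1\in\C^W$ and $v_2\in\C^{N-W}$, and set $\delta:=\|G^{-1}(z,z')v\|$. Since $G^{-1}(z,z')=H-zJ-z'J'$ with $H$ Hermitian and $J,J'$ the projectors onto the first $W$ and last $N-W$ coordinates, taking the imaginary part of $\langle v,G^{-1}v\rangle$ gives
$$
\delta\;\geq\;|\im\langle v,G^{-1}v\rangle|\;=\;\eta\|v_1\|^2+(\im z')\|v_2\|^2\;\geq\;\eta\|v_1\|^2,
$$
so $\|v_1\|^2\leq \delta/\eta$. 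Expanding $G^{-1}v$ in blocks and using assumption (ii) together with $|z|\leq C''$, the inequalities $\|w_1\|,\|w_2\|\leq\delta$ (where $w:=G^{-1}v$) yield
$$
\|B^*v_2\|\leq \delta+(C_0+C'')\sqrt{\delta/\eta},\qquad \|(D-z')v_2\|\leq \delta+C_0\sqrt{\delta/\eta}.
$$

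The crucial observation is that the Hermiticity of $D$ together with $\im z'\geq 0$ gives the identity
$$
\|(D-z')v_2\|^2\;=\;\|(D-\re z')v_2\|^2+(\im z')^2\|v_2\|^2,
$$
hence $\|(D-\re z')v_2\|\leq\|(D-z')v_2\|$. This is what lets us replace the complex parameter $z'$ by the real energy $e:=\re z'$, which is precisely what hypothesis (i) requires. Now case-split on $\|v_2\|$. If $\|v_2\|^2\leq 3/4$, then $\|v_1\|^2\geq 1/4$, and the inequality $\|v_1\|^2\leq\delta/\eta$ immediately forces $\delta\geq\eta/4$. Otherwise $\|v_2\|\geq\sqrt{3}/2$, and applying hypothesis (i) to the unit vector $u:=v_2/\|v_2\|$ with $e=\re z'$ yields at least one of $\|B^*u\|>\mu$ or $\|(D-e)u\|>\mu$; combined with the two displayed bounds and the Hermiticity identity, this produces
$$
\tfrac{\mu\sqrt 3}{2}\;<\;\delta+C\sqrt{\delta/\eta},\qquad C:=C_0+C'',
$$
which in turn forces $\delta\geq\min\!\bigl(\mu\sqrt 3/4,\;3\mu^2\eta/(16C^2)\bigr)$. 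In every case we obtain $\delta\geq c_0(\mu,C_0,C'')\min(\eta,1)$.

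Since $\eta\leq |z|\leq C''$, we can replace $\min(\eta,1)$ by $\eta/\max(1,C'')$, giving $\|G^{-1}(z,z')v\|\geq c\eta\|v\|$ for every $v$, with $c=c(\mu,C_0,C'')$. This injectivity of $G^{-1}(z,z')$ on a finite-dimensional space implies invertibility, and the estimate $\|G(z,z')\|\leq C'/\eta$ follows with $C'=1/c$. The main point of the argument is therefore the Hermiticity identity that trades $z'$ for $\re z'$; without it one cannot invoke (i) at all. A minor technical point is that (i) is formulated for real vectors while we apply it to a complex $u=v_2/\|v_2\|$; this is harmless because the large-deviation proof of Proposition \ref{apL} goes through verbatim for complex unit vectors (only $\ell^2$ norms enter), or, failing that, splitting $u$ into real and imaginary parts costs only a bounded factor in the constants.
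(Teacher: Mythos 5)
Your proof is correct, and it takes a genuinely different route from the paper's. The paper first reduces to $\im z'=0$ (via the resolvent identity and integration of $\partial_{z'}G=GJ'G$), then applies the Schur complement formula and bounds each of the four blocks separately; the key step there is to show $\|(D-e'-B(A-z)^{-1}B^*)^{-1}\|\le C/\eta$ by assuming a near-null vector of the Schur complement, extracting $\|B^*\bu\|^2\lesssim\wt\mu$ from the imaginary part of the quadratic form $\langle \bu, B\frac{\eta}{(A-e)^2+\eta^2}B^*\bu\rangle$, and contradicting hypothesis (i). You instead prove the lower bound $\|G^{-1}(z,z')v\|\ge c\,\eta\|v\|$ directly for every unit vector $v$: the imaginary part of $\langle v,G^{-1}v\rangle$ controls $\|v_1\|^2\le\delta/\eta$, the block equations then control $\|B^*v_2\|$ and $\|(D-z')v_2\|$, the Hermiticity identity $\|(D-z')v_2\|^2=\|(D-\re z')v_2\|^2+(\im z')^2\|v_2\|^2$ trades $z'$ for the real energy $\re z'$, and hypothesis (i) applied to $v_2/\|v_2\|$ closes the dichotomy. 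The underlying mechanism (imaginary part of a quadratic form forcing $\|B^*u\|$ small, then contradicting (i)) is the same in spirit, but your version avoids both the Schur decomposition and the separate reduction to $\im z'=0$, handling all blocks and all $\im z'\in[0,\im z]$ in one stroke; this is arguably cleaner. Two points you rightly flag and resolve: the application of (i) to a complex $v_2$ (handled by splitting into real and imaginary parts at the cost of replacing $\mu$ by $\mu/\sqrt2$, an issue the paper's own argument shares silently), and the conversion of the $\eta$-free lower bound $\mu\sqrt3/4$ into $c\,\eta$ using $\eta\le|z|\le C''$.
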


\begin{proof}[Proof of Lemma \ref{lem: obG}]
 Define the symmetric matrix
$$
   P: = \begin{pmatrix} A-\re z  & B^* \cr B & D{ -\re z'}  \end{pmatrix},
$$
then by resolvent identity we have   
$$
G=G(z,z')=\frac{1}{P-\ii\im z}+\frac{1}{P-\ii\im z}(\im z'-\im z)\ii JG,
$$
 where the matrix $J$ was already  defined by  $J_{ij}=1_{1 \le i\le  W} {\delta_{ij}} $. 
Here $W$ is the size of the block $A$. \nc
Then 
$$
\|G\|\le (\im z)^{-1}+ \frac{\im z -\im z'}{\im z}\|G\|,
$$
which implies $\|G\|\le (\im z')^{-1}$.   Furthermore, with 
 [I removed a prime from $J$, I think $J'$ was something obsolete] \nc  
 $$
\partial_{z'}G  =GJG, 
$$
it is easy to see  by integrating $\partial_{z'}G$ from $z'=e'$ to $z'=e'+i\eta'$  \nc
that we only need to prove  \eqref{xHUI} for the case $\im z'=0$. Hence from now on, we assume that 
$$
z=e+\ii\eta, \quad z'=e'. 
$$
 Applying  Schur formula, we obtain
$$
   G= G(z,z')=\begin{pmatrix}  A-z & B^* \cr
   B & D-z'
\end{pmatrix}^{-1} = \begin{pmatrix}  
\frac{1}{A-z- B^*(D-z')^{-1}B} &  -\frac{1}{A-z}B^* \frac{1}{D-z' - 
  B(A-z)^{-1}B^*} \cr
 -\frac{1}{ D-z' - B(A-z)^{-1} B^*}B\frac{1}{A-z} & 
   \frac{1}{D-z' - B(A-z)^{-1}B^*}
\end{pmatrix} . 
$$
First with $\im z'=0$, we have  the trivial bounds (which  follows by  $\|(P+i\eta )^{-1}\|\le \eta^{-1}$ for any 
symmetric matrix $P$):
\be\label{ABC1}
\Big\| \left(A-z -   B^*\frac{1}{D-z'}B\right)^{-1}\Big\| \le \frac{1 }{|\im z|},
  \qquad \Big\| \left(A-z\right)^{-1}\Big\| \le \frac{1}{|\im z|},
\ee
 which controls the upper left corner of $G$. \nc
Second, we claim that  
\be\label{ABC2}
\|\frac{1}{A-z}B^* \frac{1}{D-z' - 
  B\frac{1}{A-z}B^*}\|^2\le  \frac{1}{|\im z|}  
\Big\|  \frac{1}{D-e'   -
  B\frac{1}{A-z}B^*}  \Big\|.
  \ee
 For the proof,  picking any nonzero vector $\bv$ and  setting \nc $\bu=(D-e'  -  B\frac{1}{A-z}B^*)^{-1}\bv$, we have  
\begin{align*}
  \|\bv\| & =\Big\|
\Big(D-e'  -  B\frac{1}{A-z}B^*\Big)\bu  \Big\| \ge  \frac{1}{\| \bu\|}
\Big|\Big\langle \bu, \Big(D-e' -  B\frac{1}{A-z}B^*\Big)\bu\Big\rangle \Big|
\\
  &=  \frac{1}{\| \bu\|}
\Big|\Big\langle \bu, \Big(D-e' - B\frac{A-e}{(A-e)^2+\eta^2}B^*\Big)\bu 
  \Big\rangle - i \Big\langle \bu, \Big( 
 B\frac{\eta}{(A-e)^2+\eta^2}B^*\Big)\bu 
  \Big\rangle 
 \Big| 
\\
& \ge \frac{\eta}{\|\bu\|}  \Big\langle u, 
 B\frac{1}{(A-e)^2+\eta^2}B^*\bu  \Big\rangle  
  =  \frac{\eta}{\|\bu\|}  \Big\langle u, 
 B\frac{1}{|A-z|^2}B^* \bu  \Big\rangle  
 \\
 & =  \frac{\eta}{\|\bu\|}
 \Big\|  \frac{1}{A-z} B^* \bu  \Big\|^2.
 \end{align*}
Changing the vector $\bu$ back to $\bv$, we have
$$
 \Big\| \frac{1}{A-z} B^*  \frac{1}{D-e'   -
  B\frac{1}{A-z}B^*} \bv\Big\|^2  \le  \frac{1}{|\im z|}  
\Big\|  \frac{1}{D-e'   -
  B\frac{1}{A-z}B^*} \bv \Big\| \|\bv\|, 
$$
which implies \eqref{ABC2}. Now it only remains to bound $\Big\| \Big(D-e'   -
  B\frac{1}{A-z}B^*\Big)^{-1} \Big\|$  by $C/\eta$,  which would then control all other three blocks of $G$. \nc
    Suppose for some normalized vector $\bu$ and small $\wt \mu>0$, we have
\be\label{dhp}
 \Big\| (D-e')\bu - B\frac{1}{A-z}B^*\bu\Big\| 
\le \wt  \mu \eta.
\ee
Then
$$
\wt \mu \eta\ge 
\Big| \im \Big\langle \bu, (D-e' )\bu + B\frac{1}{A-z}B^*\bu \Big\rangle\Big|
=  \Big\langle \bu, \Big(   B\frac{\eta}{(A-e)^2+\eta^2}B^*\Big) \bu \Big\rangle.
$$
Then for some $C_1>0$, we have
\be\label{secc}
 \wt  \mu \ge \Big\langle \bu,  B\frac{1}{|A-z|^2}B^* \bu \Big\rangle \ge \frac{1}{C_1}
   \| B^*\bu\|^2
\ee
where we used that the fact $|A-z|^2 $ is bounded.   This shows that
\be\label{need3}
   \| B^*\bu \| \le \sqrt{C_1\wt \mu}, \qquad \| BB^*\bu\|\le \sqrt{  C_0C_1\wt \mu}
\ee
  by  \eqref{apriori3}.
From \eqref{secc}, we also have
$$
    \Big\| B\frac{1}{A-z}B^*\bu\Big\|^2 = 
  \Big\langle \bu,  B\frac{1}{A-    \bar z}B^* B\frac{1}{A-z}B^*\bu \Big\rangle
  \le  C_0\Big\langle \bu,  B\frac{1}{|A-z|^2}B^*\bu \Big\rangle \le C_0\wt \mu.
$$
Then with \eqref{dhp}, for small enough $\wt \mu$, we have 
\be\label{need2}
  \| (D-e')\bu\| \le  
   \Big\| B\frac{1}{A-z}B^*\bu\Big\| +\wt \mu \eta 
  \le  \sqrt{C_0\wt \mu }+\wt \mu \eta \le C\sqrt{ \wt \mu}.
\ee
Combining  \eqref{need3},  \eqref{need2} and \eqref{hlan}, we obtain \eqref{dhp} does not hold for small enough { $\wt \mu$. } Together with \eqref{ABC1} and \eqref{ABC2}, we completed the proof of Lemma \ref{lem: obG}. 
\end{proof}

 \subsection{Proof of Theorem \ref{local law}.  } 
  Now we return to prove Theorem \ref{local law}, i.e., the local law of the Green's function of some particular matrices which are derived from band matrix.  
 
\begin{proof}[Proof of \eqref{Evden}] As explained in \eqref{WTYH}, we know that $Q^{\b g}_e (t, q)$ is a
matrix of the form in \eqref{defwQ}.  We will apply Lemma~\ref{basic lem} with $M=W$ and $L=N-W$. 
Since $H$ is a band matrix with band width $4W-1$, see \eqref{eqn:band2},  the upper $W\times W$
block of $B$ has variance $(4W-1)^{-1}$, i.e.
  $$
s_{ij}=\E B_{ij}^2  = \frac{1}{4W-1} \nc , \quad 1\le i,j\le W.
 $$
Using this information in \eqref{Bu2} to estimate $\sum_{1\le i\le W} |u_i|^2$ from below and inserting 
this into the last condition in \eqref{brownBand}, \nc we learn that  for some small $\mu>0$ we have  
 $$
 \P\left(\exists e\in \R, \; \exists \bu\in \R^{N-W}\; : \;  \|\bu\|=1, \;  \| B^*\bu\|\le \mu, \; \| ( D^{\b g}-e) \bu\|\le \mu   \right)\le N^{-D}.
 $$
  We also know that $\| H\|$, hence $\| A\|$, $\| B\|$ and $\| D\|$ are all bounded
 by a large constant with very high probability.  \nc
 Then using  Lemma \ref{lem: obG}, we obtain that for some large $C>0$, we have 
 $$
  \P\left(\exists z,z'\in \C: |z|, |z'|\le 3, \; \im z\ge \im z'\ge 0,   \|\wt G^{\b g}(z,z')\|\ge C(\im z)^{-1}\right)\le N^{-D} .
  $$
 With this bound, we can use   Corollary~\ref{locQq2}. Together with \eqref{falv2.5}, we complete the proof of \eqref{Evden}.
 \end{proof}
 
\begin{proof}[Proof of \eqref{Evden2}] Because of \eqref{Evden}, it only remains to prove \eqref{Evden2} for 
\be\label{cyL}
 |E-e|\le N^{-\omega}, \quad  { N^{-\omega}\le \eta\le 1}.
 \ee
 Recall  $ \xi_{k}^{\b g}(e,t, q)$, $1\le k\le  W\nc$ is the $k$-th eigenvalue of $Q^{\b g}_e(t,q)$. Then
 $$
 \im\sum_j\big[Q ^{\b g}_e(t, q)-z\big]^{-1}_{jj}=  \im \tr \frac{1}{Q ^{\b g}_e(t, q)-E-i\eta} =\nc
 \sum_{k}\frac{\eta}{|\xi_{k}^{\b g}(e,t, q)-E|^2+\eta^2}, \quad z=E+i\eta.
 $$
 In our case \eqref{cyL}, we know
 $$
 |\xi_{k}^{\b g}(e,t, q)-E|^2+\eta^2\sim  |\xi_{k}^{\b g}(e,t, q)-e|^2+\eta^2.
 $$
 
Therefore, we only need to prove that there exists $c>0$ such that 
$$
\mathbb P\left(\exists \eta,\; N^{-\omega}\le \eta\le 1   \; s.t.\;  \frac1W \im \tr \frac{1}{Q ^{\b g}_e(t, q)-e-i\eta}\notin [c, c ^{-1}] \right)\le N^{-D}.
$$
 After adjusting the constant $c$,  \nc
it will be implied by the following  high probability bound on the eigenvalue density: \nc 
\be\label{MIA2}
\mathbb P\left(\exists\eta, \;  N^{-\omega}\le \eta\le 1   \; s.t.\;   (N\eta)^{-1} \nc
 \#\big\{k: \xi_{k}^{\b g}(e,t, q)\in [e-\eta, e+\eta]\big\}\notin [c, c ^{-1}] \right)\le N^{-D}.
\ee
 From Section~\ref{sec:MF} recall the definition of curves $\cal C^{\b g}_k (e)$ constructed from
the matrix \eqref{meq1}.  Similarly,  starting with the matrix $\wt H^{\b g}$, see \eqref{Hgdef}
and \eqref{WTYH}, \nc
we can define the curves $e\to \cal C^{\b g}_k (e, t, q)$
 for any fixed parameters $t$, $q$. \nc As in Lemma \ref{holo}, we have that for any $K$, there exists $C_K$
such that 
     \be\label{evderTE}
\P\left(  \sup_{   e \not \in \sigma({ D^{\b g}})  }\sup_k {\mathds{1}}( | \cal C^{\b g}_k(e, t, q)| \le K ) 
   \Big| \frac{\rd  \cal C^{\b g}_k}{\rd e} (e, t, q) \Big|    \le   C_K \right)\le N^{-D}.
   \ee
It means the slopes of these curves are bounded  in $[-K,K]^2$.  The crossing points of these curves with $x=y$ line are exactly the points
$$
   (\lambda^{\b g}_k(t, q), \lambda^{\b g}_k(t, q)), \quad 1\le k\le N,
$$
   where $\lambda^{\b g}_k(t, q)$ is the $k$-th eigenvalue of $\wt H^{\b g}$.
   By simple perturbation theory and using $|t-q|\le N^{-1/2}$, $\|\b \g\|\le N^{-1/2}$, \nc 
  it is easy to see that with high probability, we have 
$$
|   \lambda_k  -\lambda^{\b g}_k(t, q)|\ll N^{-\omega}, \quad \lambda_k:=\lambda_k(q,q).
$$
  Note $\lambda_k(q,q)$  is the eigenvalue of a regular generalized Wigner matrix,
   i.e. $\wt H^{\b g=0}$ at $t=q$ has variances summing up exactly to one in each row. \nc
   Then together with the  rigidity of $\lambda_k$, we know 
$$
\mathbb P\left(\exists N^{-\omega}\le \eta\le 1   \; s.t.\;   (N\eta)^{-1} \nc \#\big\{k: \cal C_{k}^{\b g}(e,t, q)\in [e-\eta, e+\eta]\big\}\notin [c, c ^{-1}] \right)\le N^{-D}.
$$
With \eqref{evderTE},  (note $\frac{\rd  \cal C^{\b g}_k}{\rd e} \le 0$ as in \eqref{h1}) we obtain \eqref{MIA2} and complete the proof of 
\eqref{Evden2}. \end{proof}
 
\begin{proof}[Proof of \eqref{Evden3}] With \eqref{Evden2}, we know that 
$$
\P\Big(   \exists x, y   \in [e-N^{-\om}, e+N^{-\om}],  \;|x-y|\ge N^{-1+\omega}, \;
\;  N^{-1}\nc  \#\big\{k: \xi_{k}^{\b g}(e,t, q)\in [x, y]\big\}\ge |x-y| \log N \Big)\le N^{-D}.
$$
It is easy to see that it implies \eqref{Evden3}, which completes the proof of Theorem \ref{local law}. 
\end{proof}

\begin{bibdiv}
\begin{biblist}

\bib{AjaErdKru2015}{article}{
   author={Ajanki, O.},
   author={Erd{\H o}s, L.},
   author={Kruger, T.},
   title={Universality for general Wigner-type matrices},
   journal={prepublication, arXiv:1506.05098},
   date={2015}
}

\bib{AjaErdKru2015/2}{article}{
   author={Ajanki, O.},
   author={Erd{\H o}s, L.},
   author={Kruger, T.},
   title={Quadratic vector equations on complex upper half plane},
   journal={prepublication, arXiv:1506.05095},
   date={2015}
}

\bib{AnaLeM2013}{article}{
   author={Anantharaman, N.},
   author={Le Masson, E.},
   title={Quantum ergodicity on large regular graphs},
   journal={Duke Math. J.},
   date={2015},
   volume={164},
   number={4},
   pages={723--765}
}

 \bib{BaoErd2015}{article}{
   author={Bao, Z.},
   author={Erd{\H o}s, L.},,
   title={Delocalization for a class of random block band matrices},
   journal={to appear in Probab. Theory Related Fields},
   date={2016}}

 \bib{BouErdYauYin2015}{article}{
   author={Bourgade, P.},
   author={Erd{\H o}s, L.},
   author={Yau, H.-T.},
   author={Yin, J.},
   title={Fixed energy universality for generalizd Wigner matrices},
   journal={to appear in Communications on Pure and Applied Mathematics},
   date={2016}}

 \bib{BouHuaYau2016}{article}{
   author={Bourgade, P.},
   author={Huang, J.},   
   author={Yau, H.-T.},
   title={Eigenvector statistics of sparse random matrices},
   journal={},
   date={2016}}

 \bib{BouYau2013}{article}{
   author={Bourgade, P.},
   author={Yau, H.-T.},
   title={The Eigenvector Moment Flow and local Quantum Unique Ergodicity},
   journal={to appear in Commun. Math. Phys.},
   date={2016}}

\bib{Col1985}{article}{
   author={Colin de Verdi{\`e}re, Y.},
   title={Ergodicit\'e et fonctions propres du laplacien},
   language={French, with English summary},
   journal={Comm. Math. Phys.},
   volume={102},
   date={1985},
   number={3},
   pages={497--502}
}

   \bib{Colin}{book}{
   author={Colin de Verdi{\`e}re, Y.},
   title={Spectres de graphes},
   language={French, with English and French summaries},
   series={Cours Sp\'ecialis\'es [Specialized Courses]},
   volume={4},
   publisher={Soci\'et\'e Math\'ematique de France, Paris},
   date={1998},
   pages={viii+114}
}

 \bib{DisPinSpe2002}{article}{
   author={Disertori, M.},
   author={Pinson, L.},
   author={Spencer, T.},
   title={Density of states for random band matrices},
   journal={Commun. Math. Phys.},
   volume={232},
   pages={83--124},
   date={2002}}

\bib{Efe1997}{article}{
   author={Efetov, K.},
   title={Supersymmetry in disorder and chaos},
   journal={Cambridge University Press},
   date={1997}
}

\bib{ErdKnoYauYin2013}{article}{
   author={Erd{\H{o}}s, L.},
   author={Knowles, A.},
   author={Yau, H.-T.},
   author={Yin, J.},
   title={The local semicircle law for a general class of random matrices},
   journal={Elect. J. Prob.},
   volume={18},
   date={2013},
   number={59},
   pages={1--58}
}

\bib{ErdKno2013}{article}{
   author={Erd{\H{o}}s, L.},
   author={Knowles, A.},
   title={Quantum Diffusion and Delocalization for Band Matrices  with General Distribution},
   journal={Ann. Inst. H. Poincar\'e},
   volume={12},
   date={2011},
   number={7},
   pages={1227-1319}
}

\bib{ErdPecRamSchYau2010}{article}{
   author={Erd{\H{o}}s, L.},
   author={P{\'e}ch{\'e}, S.},
   author={Ram{\'{\i}}rez, J. A.},
   author={Schlein, B.},
   author={Yau, H.-T.},
   title={Bulk universality for Wigner matrices},
   journal={Comm. Pure Appl. Math.},
   volume={63},
   date={2010},
   number={7},
   pages={895--925}
}

\bib{ESY1}{article}{
      author={Erd{\H{o}}s, L.},
      author={Schlein, B.},
      author={Yau, H.-T.},
       title={Semicircle law on short scales and delocalization of eigenvectors for Wigner random matrices},
        date={2009},
     journal={Annals of Probability},
      volume={37},
       pages={815\ndash852},
}

\bib{ErdSchYau2011}{article}{
      author={Erd{\H{o}}s, L.},
      author={Schlein, B.},
      author={Yau, H.-T.},
       title={Universality of random matrices and local relaxation flow},
        date={2011},
     journal={Invent. Math.},
      volume={185},
      number={1},
       pages={75\ndash 119},
}

\bib{ES}{article}{
      author={Erd{\H{o}}s, L.},
      author={Schnelli, K.},
      title={Universality for Random Matrix Flows with Time-dependent Density},
      journal={preprint, arXiv:1504.00650},
      date={2015},
}

\bib{ErdYau2012}{article}{
      author={Erd{\H{o}}s, L.},
      author={Yau, H.-T.},
       title={Universality of local spectral statistics of random matrices},
        date={2012},
     journal={Bull. Amer. Math. Soc. (N.S.)},
      volume={49},
      number={3},
       pages={377\ndash 414},
}

\bib{ErdYau2012singlegap}{article}{
      author={Erd{\H{o}}s, L.},
      author={Yau, H.-T.},
       title={Gap universality of generalized Wigner and beta ensembles},
        date={2015},
     journal={ J. Eur. Math. Soc. },
     volume={17},
     pages={1927\ndash 2036}
}

\bib{ErdYauYin2012Univ}{article}{
      author={Erd{\H{o}}s, L.},
      author={Yau, H.-T.},
      author={Yin, J.},
       title={Bulk universality for generalized {W}igner matrices},
        date={2012},
     journal={Probab. Theory Related Fields},
      volume={154},
      number={1-2},
       pages={341\ndash 407},
}

\bib{ErdYauYin2012Rig}{article}{
      author={Erd{\H{o}}s, L.},
      author={Yau, H.-T.},
      author={Yin, J.},
       title={Rigidity of eigenvalues of generalized {W}igner matrices},
        date={2012},
     journal={Adv. Math.},
      volume={229},
      number={3},
       pages={1435\ndash 1515},
}

\bib{fy}{article}{
      author={Fyodorov, Y.V.},
      author={Mirlin, A.D.},
       title={ Scaling properties of localization in random band matrices: A $\sigma$-model approach.},
        date={1991},
     journal={Phys. Rev. Lett.},
      volume={67 },
       pages={2405\ndash 2409},
}

\bib{Hol2010}{article}{
   author={Holowinsky, R.},
   title={Sieving for mass equidistribution},
   journal={Ann. of Math. (2)},
   volume={172},
   date={2010},
   number={2},
   pages={1499--1516}
}

\bib{HolSou2010}{article}{
   author={Holowinsky, R.},
   author={Soundararajan, K.},
   title={Mass equidistribution for Hecke eigenforms},
   journal={Ann. of Math. (2)},
   volume={172},
   date={2010},
   number={2},
   pages={1517--1528}
}

\bib{Joh2001}{article}{
   author={Johansson, K.},
   title={Universality of the local spacing distribution in certain
   ensembles of Hermitian Wigner matrices},
   journal={Comm. Math. Phys.},
   volume={215},
   date={2001},
   number={3},
   pages={683--705}
}

\bib{KnoYin2013}{article}{
     author={Knowles, A.},
    author={Yin, J.},
   title={Eigenvector distribution of Wigner matrices},
   journal={Probability Theory and Related Fields},
   volume={155},
   date={2013},
   number={3},
   pages={543--582}
}

\bib{LY}{article}{
   author={Landon, B.},
   author={Yau, H.-T.},
   title={Convergence of local statistics of Dyson Brownian motion},
   journal={preprint, arXiv:1504.03605},
   date={2015}
}

\bib{LeeSchSteYau2015}{article}{
   author={Lee, J.-O.},
   author={Schnelli, K.},
   author={Stetler, B.},
   author={Yau, H.-T},
   title={Bulk universality for deformed Wigner matrices},
   journal={to appear in Annals of Probability},
   date={2015}
}

\bib{Lin2006}{article}{
   author={Lindenstrauss, E.},
   title={Invariant measures and arithmetic quantum unique ergodicity},
   journal={Ann. of Math. (2)},
   volume={163},
   date={2006},
   number={1},
   pages={165--219}
}

\bib{RudSar1994}{article}{
   author={Rudnick, Z.},
   author={Sarnak, P.},
   title={The behaviour of eigenstates of arithmetic hyperbolic manifolds},
   journal={Comm. Math. Phys.},
   volume={161},
   date={1994},
   number={1},
   pages={195--213}
}

\bib{Sch2009}{article}{
   author={Schenker, J.},
      title={Eigenvector localization for random band matrices with power law band width},
   journal={Comm. Math. Phys.},
   volume={290},
   date={2009},
   pages={1065--1097}}

\bib{Sch2014}{article}{
   author={Shcherbina, T.},
      title={Universality of the local regime for the block band matrices with a finite number of blocks},
   journal={J. Stat. Phys.},
   volume={155},
   date={2014},
   pages={466--499}}

      \bib{Sch1}{article}{
   author={Shcherbina, T.},
      title={On the Second Mixed Moment of the Characteristic Polynomials of 1D Band Matrices},
   journal={Communications in Mathematical Physics},
   volume={328},
   date={2014},
   pages={45--82}}
   
   \bib{Sch2}{article}{
   author={Shcherbina, T.},
      title={Universality of the second mixed moment of the characteristic polynomials of the 1D band matrices: Real symmetric case},
   journal={J. Math. Phys.},
   volume={56},
   date={2015}}

\bib{Shn1974}{article}{
      author={Shnirel'man, A. I.},
        date={1974},
     journal={Uspekhi Mat. Nauk},
      volume={29},
      number={6},
       pages={181\ndash 182},
}

\bib{Sod2010}{article}{
  author={Sodin, S.},
      title={The spectral edge of some random band matrices},
   journal={ Ann. of Math.},
   volume={173},
   number={3},
   pages={2223-2251},
   year={2010}
}

\bib{Spe}{article}{
   author={Spencer, T.},
      title={Random banded and sparse matrices (Chapter 23)},
   journal={Oxford Handbook of Random Matrix Theory, edited by G. Akemann, J. Baik, and P. Di Francesco},
   }

\bib{TaoVu2011}{article}{
   author={Tao, T.},
   author={Vu, V.},
   title={Random matrices: universality of local eigenvalue statistics},
   journal={Acta Math.},
   volume={206},
   date={2011},
   number={1}
}

\bib{Zel1987}{article}{
   author={Zelditch, S.},
   title={Uniform distribution of eigenfunctions on compact hyperbolic
   surfaces},
   journal={Duke Math. J.},
   volume={55},
   date={1987},
   number={4},
   pages={919--941}
}

\end{biblist}
\end{bibdiv}

\end{document}